\newif \ifMMMAS \MMMASfalse
\DeclareSymbolFontAlphabet{\mathbb}{AMSb}
\DeclareSymbolFontAlphabet{\mathbbl}{bbold}
\patchcmd{\ttlh@hang}{\parindent\z@}{\parindent\z@\leavevmode}{}{}
\patchcmd{\ttlh@hang}{\noindent}{}{}{}
\title{A discrete Weber inequality on three-dimensional hybrid spaces with application to the HHO approximation of magnetostatics}
\author{Florent Chave}
\address{Inria, Univ.~Lille, CNRS, UMR 8524 -- Laboratoire Paul Painlev\'e\\ F-59000 Lille, France\\\email{florent.chave@inria.fr}}
\author{Daniele A.~Di Pietro}
\address{IMAG, Univ.~Montpellier, CNRS\\ Montpellier, France\\\email{daniele.di-pietro@umontpellier.fr}}
\author{Simon Lemaire}
\address{Inria, Univ.~Lille, CNRS, UMR 8524 -- Laboratoire Paul Painlev\'e\\ F-59000 Lille, France\\\email{simon.lemaire@inria.fr}}
\newtheorem{theorem}{Theorem}
\newtheorem{lemma}[theorem]{Lemma}
\newtheorem{corollary}[theorem]{Corollary}
\newtheorem{remark}[theorem]{Remark}
\newcommand{\email}[1]{\href{mailto:#1}{#1}}
\title{A discrete Weber inequality on three-dimensional hybrid spaces with application to the HHO approximation of magnetostatics}
\author[1]{Florent Chave\footnote{\email{florent.chave@inria.fr}}}
\affil[1]{Inria, Univ.~Lille, CNRS, UMR 8524 -- Laboratoire Paul Painlev\'e, F-59000 Lille, France}
\author[2]{Daniele A.~Di Pietro\footnote{\email{daniele.di-pietro@umontpellier.fr}}}
\affil[2]{IMAG, Univ.~Montpellier, CNRS, Montpellier, France}
\author[1]{Simon Lemaire\footnote{\email{simon.lemaire@inria.fr} (corresponding author)}}
\newcommand{\Real}{\mathbb{R}}
\newcommand{\Natural}{\mathbb{N}}
\def\normal{{\boldsymbol{n}}}
\def\f{{\mathcal{F}}}
\def\t{{\mathcal{T}}}
\newcommand{\st}{~:~}
\DeclareMathOperator{\Div}{div}
\DeclareMathOperator{\Curl}{\bf curl}
\DeclareMathOperator{\Grad}{\bf grad}
\newcommand{\Hdiv}[1][\Omega]{\boldsymbol{H}(\Div ; #1)}
\newcommand{\Hdivz}[1][\Omega]{\boldsymbol{H}(\Div^0 ; #1)}
\newcommand{\Hcurl}[1][\Omega]{\boldsymbol{H}(\Curl ; #1)}
\newcommand{\Hzcurl}[1][\Omega]{\boldsymbol{H}_{\boldsymbol{0}}(\Curl ; #1)}
\newcommand{\Poly}{\mathcal{P}}
\def\PkdTh{ {\boldsymbol\Poly^k(\t_h)} }
\def\PkpdT{ {\boldsymbol\Poly^{k+1}(T)} }
\def\PkpdTh{ {\boldsymbol\Poly^{k+1}(\t_h)} }
\def\PkT{ {\Poly^k(T)} }
\def\PkTh{ {\Poly^k(\t_h)} }
\def\PkppT{ {\Poly^{k+2}(T)} }
\def\PkdF{ {\boldsymbol\Poly^k(F)} }
\def\PkpF{ {\Poly^{k+1}(F)} }
\def\XT{ {\underline{\boldsymbol{\rm X}}}_T^{k+1} }
\def\XTs{ {\underline{\boldsymbol{\rm X}}}_{\flat,T}^{k+1} }
\def\Xh{ {\underline{\boldsymbol{\rm X}}}_h^{k+1} }
\def\Xhs{ {\underline{\boldsymbol{\rm X}}}_{\flat,h}^{k+1} }
\def\Xhz{ {\underline{\boldsymbol{\rm X}}_{h,\boldsymbol{0}}^{k+1}} }
\def\Xhzs{ {\underline{\boldsymbol{\rm X}}_{\flat,h,\boldsymbol{0}}^{k+1}} }
\def\XXhz{ \overset{\circ}{{\underline{\boldsymbol{\rm X}}}}{}_{h,\boldsymbol{0}}^{k+1} }
\def\XXhzs{ \overset{\circ}{{\underline{\boldsymbol{\rm X}}}}{}_{\flat,h,\boldsymbol{0}}^{k+1} }
\def\YT{ {\underline{\rm Y}}_T^{k+1} }
\def\Yh{ {\underline{\rm Y}}_h^{k+1} }
\def\Yhz{ {\underline{\rm Y}_{h,0}^{k+1}} }
\def\Zh{ {\underline{\mathbb{Z}}_h^{k+1}} }
\def\Zhs{ {\underline{\mathbb{Z}}_{\flat,h}^{k+1}} }
\def\Zhz{ {\underline{\mathbb{Z}}_{h,\mathbbl{0}}^{k+1}} }
\def\Zhzs{ {\underline{\mathbb{Z}}_{\flat,h,\mathbbl{0}}^{k+1}} }
\def\ZZhz{ \overset{\circ}{{\underline{\mathbb{Z}}}}{}_{h,\mathbbl{0}}^{k+1} }
\def\ZZhzs{ \overset{\circ}{{\underline{\mathbb{Z}}}}{}_{\flat,h,\mathbbl{0}}^{k+1} }
\def\IntXh{ {\underline{\boldsymbol{\rm I}}_{\boldsymbol{{\rm X}},h}^{k+1}} }
\def\IntXhs{ {\underline{\boldsymbol{\rm I}}_{\boldsymbol{{\rm X}},\flat,h}^{k+1}} }
\def\IntYh{ {\underline{\rm I}_{{\rm Y},h}^{k+1}} }
\def\IntUh{ {\underline{\boldsymbol{\rm I}}_{\boldsymbol{{\rm X}},h}^{k+1}} }
\def\IntPh{ {\underline{\rm I}_{{\rm Y},h}^{k+1}} }
\def\Gkh{ {\boldsymbol{G}_h^{k+1}} }
\def\GkT{ {\boldsymbol{G}_T^{k+1}} }
\def\Ckh{ {\boldsymbol{C}_h^{k}} }
\def\CkT{ {\boldsymbol{C}_T^{k}} }
\def\hatuh{{ \widehat{\underline{\boldsymbol{\rm u}}}_h }} 
\def\erruh{{ \underline{\boldsymbol{\rm e}}_h }} 
\def\uh{{ \underline{\boldsymbol{\rm u}}_h }} 
\def\vh{{ \underline{\boldsymbol{\rm v}}_h }}
\def\vhs{{ \underline{\boldsymbol{\rm v}}_h^\star }}
\def\wh{{ \underline{\boldsymbol{\rm w}}_h }}
\def\zh{{ \underline{\mathbbl{z}}_h }}
\def\hatph{{ \widehat{\underline{\rm p}}_h }} 
\def\errph{{ \underline{\upepsilon}_h }} 
\def\ph{{ \underline{\rm p}_h }} 
\def\qh{{ \underline{\rm q}_h }} 
\def\rh{{ \underline{\rm r}_h }} 
\def\hatuTF{{ \widehat{\underline{\boldsymbol{\rm u}}}_T }}
\def\vTF{{ \underline{\boldsymbol{\rm v}}_T }}
\def\wTF{{ \underline{\boldsymbol{\rm w}}_T }}
\def\pTF{{ \underline{\rm p}_T }} 
\def\qTF{{ \underline{\rm q}_T }} 
\def\rTF{{ \underline{\rm r}_T }} 
\def\uTh{{ {\boldsymbol{\rm u}}_h }} 
\def\vTh{{ {\boldsymbol{\rm v}}_h }}
\def\vThs{{ {\boldsymbol{\rm v}}_h^\star }}
\def\wTh{{ {\boldsymbol{\rm w}}_h }}
\def\qTh{{ {\rm q}_h }} 
\def\rTh{{ {\rm r}_h }} 
\def\vT{{ {\boldsymbol{\rm v}}_T }}
\def\vTs{{ {\boldsymbol{\rm v}}_T^\star }}
\def\wT{{ {\boldsymbol{\rm w}}_T }}
\def\qT{{ {\rm q}_T }}
\def\vF{{ {\boldsymbol{\rm v}}_F }}
\def\vFs{{ {\boldsymbol{\rm v}}_F^\star }}
\def\wF{{ {\boldsymbol{\rm w}}_F }}
\def\qF{{ {\rm q}_F }} 
\def\rF{{ {\rm r}_F }} 
\def\Ah{{ \mathrm{A}_h }} 
\def\ah{{ {\rm a}_h }} 
\def\bh{{ {\rm b}_h }} 
\def\ch{{ {\rm c}_h }}
\def\dh{{ {\rm d}_h }}
\def\lh{{ {\rm l}_h }} 
\def\mh{{ {\rm m}_h }} 
\def\sh{{ {\rm s}_h }}
\newcommand{\defi}{\mathrel{\mathop:}=}
\newcommand{\ifed}{=\mathrel{\mathop:}}
\renewcommand{\vec}[1]{\boldsymbol{#1}}
\newcommand{\logLogSlopeTriangle}[5]
{
    \pgfplotsextra
    {
        \pgfkeysgetvalue{/pgfplots/xmin}{\xmin}
        \pgfkeysgetvalue{/pgfplots/xmax}{\xmax}
        \pgfkeysgetvalue{/pgfplots/ymin}{\ymin}
        \pgfkeysgetvalue{/pgfplots/ymax}{\ymax}

        % Calculate auxilliary quantities, in relative sense.
        \pgfmathsetmacro{\xArel}{#1}
        \pgfmathsetmacro{\yArel}{#3}
        \pgfmathsetmacro{\xBrel}{#1-#2}
        \pgfmathsetmacro{\yBrel}{\yArel}
        \pgfmathsetmacro{\xCrel}{\xArel}
        %\pgfmathsetmacro{\yCrel}{ln(\yC/exp(\ymin))/ln(exp(\ymax)/exp(\ymin))} 

        \pgfmathsetmacro{\lnxB}{\xmin*(1-(#1-#2))+\xmax*(#1-#2)} % in [xmin,xmax].
        \pgfmathsetmacro{\lnxA}{\xmin*(1-#1)+\xmax*#1} % in [xmin,xmax].
        \pgfmathsetmacro{\lnyA}{\ymin*(1-#3)+\ymax*#3} % in [ymin,ymax].
        \pgfmathsetmacro{\lnyC}{\lnyA+#4*(\lnxA-\lnxB)}
        \pgfmathsetmacro{\yCrel}{\lnyC-\ymin)/(\ymax-\ymin)}

        % Define coordinates for \draw. MIND THE 'rel axis cs' as opposed to the 'axis cs'.
        \coordinate (A) at (rel axis cs:\xArel,\yArel);
        \coordinate (B) at (rel axis cs:\xBrel,\yBrel);
        \coordinate (C) at (rel axis cs:\xCrel,\yCrel);

        % Draw slope triangle.
        \draw[#5]   (A)-- node[pos=0.5,anchor=north] {\scriptsize{1}}
                    (B)-- 
                    (C)-- node[pos=0.,anchor=west] {\scriptsize{#4}} %% node[pos=0.5,anchor=west] {#4}
                    (A);
    }
}
\newcommand{\logLogSlopeTriangleNDOFs}[5]
{
    \pgfplotsextra
    {
        \pgfkeysgetvalue{/pgfplots/xmin}{\xmin}
        \pgfkeysgetvalue{/pgfplots/xmax}{\xmax}
        \pgfkeysgetvalue{/pgfplots/ymin}{\ymin}
        \pgfkeysgetvalue{/pgfplots/ymax}{\ymax}

        % Calculate auxilliary quantities, in relative sense.
        \pgfmathsetmacro{\xArel}{#1}
        \pgfmathsetmacro{\yArel}{#3}
        \pgfmathsetmacro{\xBrel}{#1-#2}
        \pgfmathsetmacro{\yBrel}{\yArel}
        \pgfmathsetmacro{\xCrel}{\xArel}
        %\pgfmathsetmacro{\yCrel}{ln(\yC/exp(\ymin))/ln(exp(\ymax)/exp(\ymin))} 

        \pgfmathsetmacro{\lnxB}{\xmin*(1-(#1-#2))+\xmax*(#1-#2)} % in [xmin,xmax].
        \pgfmathsetmacro{\lnxA}{\xmin*(1-#1)+\xmax*#1} % in [xmin,xmax].
        \pgfmathsetmacro{\lnyA}{\ymin*(1-#3)+\ymax*#3} % in [ymin,ymax].
        \pgfmathsetmacro{\lnyC}{\lnyA-#4*(\lnxA-\lnxB)}
        \pgfmathsetmacro{\yCrel}{\lnyC-\ymin)/(\ymax-\ymin)}

        % Define coordinates for \draw. MIND THE 'rel axis cs' as opposed to the 'axis cs'.
        \coordinate (A) at (rel axis cs:\xArel,\yArel);
        \coordinate (B) at (rel axis cs:\xBrel,\yBrel);
        \coordinate (C) at (rel axis cs:\xCrel,\yCrel);

        % Draw slope triangle.
        \draw[#5]   (A)-- node[pos=0.5,anchor=north] {\scriptsize{1}}
                    (B)-- 
                    (C)-- node[pos=0.,anchor=east] {\scriptsize{#4}} %% node[pos=0.5,anchor=west] {#4}
                    (A);
    }
}
\newcounter{corr}
\definecolor{violet}{rgb}{0.580,0.,0.827}
\begin{document}

\maketitle

\ifMMMAS

\begin{history}
  \received{(26 October 2020)}
  \revised{(14 June 2021)}
  \accepted{(22 October 2021)}
  \comby{(xxxxxxxxxx)}
\end{history}
\begin{abstract}
  We prove a discrete version of the first Weber inequality on three-dimensional hybrid spaces spanned by vectors of polynomials attached to the elements and faces of a polyhedral mesh. We then introduce two Hybrid High-Order methods for the approximation of the magnetostatics model, in both its (first-order) field and (second-order) vector potential formulations. These methods are applicable on general polyhedral meshes, and allow for arbitrary orders of approximation. Leveraging the previously established discrete Weber inequality, we perform a comprehensive analysis of the two methods. We finally validate them on a set of test-cases.
\end{abstract}
\keywords{Weber inequalities; Hybrid spaces; Polyhedral meshes; Hybrid High-Order methods; Magnetostatics.}
\ccode{AMS Subject Classification: 65N08, 65N12, 65N30.}

\else

\begin{abstract}
  We prove a discrete version of the first Weber inequality on three-dimensional hybrid spaces spanned by vectors of polynomials attached to the elements and faces of a polyhedral mesh. We then introduce two Hybrid High-Order methods for the approximation of the magnetostatics model, in both its (first-order) field and (second-order) vector potential formulations. These methods are applicable on general polyhedral meshes, and allow for arbitrary orders of approximation. Leveraging the previously established discrete Weber inequality, we perform a comprehensive analysis of the two methods. We finally validate them on a set of test-cases.
  \medskip\\
  \textbf{Keywords:} Weber inequalities; Hybrid spaces; Polyhedral meshes; Hybrid High-Order methods; Magnetostatics.
  \smallskip\\
  \textbf{AMS Subject Classification:} 65N08, 65N12, 65N30.
\end{abstract}

\fi

\section{Introduction}

Let $\Omega\subset\Real^3$ denote an open, bounded, and connected polyhedral domain.
In the study of problems in electromagnetism, Weber inequalities~\cite{Weber:80} constitute a very powerful tool. They can be viewed as a generalization of the celebrated Poincar\'e inequality to the case of vector fields belonging to $\Hcurl\cap\Hdiv$, and featuring either vanishing tangential component (first Weber inequality), or vanishing normal component (second Weber inequality) on the boundary $\partial\Omega$ of the domain. We refer the reader to~\cite[Theorems 3.4.3 and 3.5.3]{Assous.ea:18} for a general (from a topological viewpoint) statement of Weber inequalities.

We next state the (continuous) first Weber inequality.
For the sake of simplicity, we assume from now on that $\Omega$ is simply-connected and that $\partial\Omega$ is connected.
Under these assumptions, the first and second Betti numbers of $\Omega$ are both zero, i.e., $\Omega$ does not have tunnels and does not enclose any void.
For a deeper insight into the role of the different topological assumptions we make on the domain, we refer to Remark~\ref{rem:top}.
For any measurable set $X\subset\overline{\Omega}$, we irrespectively denote by $(\cdot,\cdot)_X$ and $||\cdot||_X$ the usual inner products and norms on the scalar-valued space $L^2(X)$ and on the vector-valued spaces $\boldsymbol{L}^2(X;\mathbb{R}^\ell)$ for $\ell\in\{2,3\}$.
We also set, letting $\normal$ denote the unit normal vector field on $\partial\Omega$ pointing out of $\Omega$, $\Hzcurl\defi\left\{\vec{v}\in\Hcurl\st\vec{n}{\times}(\vec{v}{\times}\vec{n})=\vec{0}\text{ on }\partial\Omega\right\}$, and
\begin{alignat*}{1}
  \Hdivz\defi&\left\{\vec{v}\in\Hdiv\st\Div\vec{v}=0\text{ in }\Omega\right\}\\=&\left\{\vec{v}\in \boldsymbol L^2(\Omega;\mathbb{R}^3)\st
  (\vec{v},\Grad\varphi)_\Omega=0\quad\forall\varphi\in H^1_0(\Omega)
  \right\}.
\end{alignat*}
The following $\boldsymbol L^2(\Omega;\mathbb{R}^3)$-orthogonal decomposition holds true (cf.~\cite[Proposition 3.7.2]{Assous.ea:18}):
\begin{equation} \label{eq:helmholtz}
  \Hzcurl={\bf grad}\big(H^1_0(\Omega)\big)\overset{\perp}{\oplus}\left(\Hzcurl\cap\Hdivz\right).
\end{equation}
With the assumptions we have made on the topology of the domain $\Omega$, the first Weber inequality reads: For any $\vec{v}\in\Hzcurl\cap\Hdivz$,
\begin{equation} \label{eq:weber}
  \|\vec{v}\|_\Omega\leq C_{\rm W}\,\|\Curl\vec{v}\|_\Omega,
\end{equation}
for some constant $C_{\rm W}>0$ only depending on the domain $\Omega$.
In this work, we derive a discrete version of the first Weber inequality~\eqref{eq:weber} on (three-dimensional) hybrid spaces spanned by vectors of polynomials attached to the elements and faces of a (polyhedral) mesh, as they can be encountered in Hybridizable Discontinuous Galerkin~\cite{Cockburn.ea:09} and related~\cite{Wang.Ye:13} methods, or in Hybrid High-Order (HHO)~\cite{DP.Ern.Lemaire:14,DP.Ern:15} methods; see~\ifMMMAS Ref.~\citen{Cockburn.DP.Ern:16} \else\cite{Cockburn.DP.Ern:16} \fi for a discussion highlighting the analogies and differences between these two families of methods in the context of scalar variable diffusion. The corresponding result is stated in Theorem~\ref{thm:max.ineq}. The proof extends the general ideas used in~\cite[Lemma~2.15]{Di-Pietro.Droniou:20} to derive a discrete Poincar\'e inequality on hybrid spaces.
A related (yet weaker) result for (non-hybrid) broken polynomial spaces on tetrahedral meshes has been derived in~\cite[Lemma~4.1]{Chen.ea:18}.
Poincar\'e--Friedrichs inequalities for complexes of discrete distributional differential forms (introduced in~\ifMMMAS Ref.~\citen{Licht:17} \else\cite{Licht:17} \fi as a generalization of the distributional finite element de Rham sequences of~\cite[Section 3]{Braess.Schoberl:08}) are also proved in the recent work~\ifMMMAS of Ref.~\citen{Christiansen.Licht:20}\else\cite{Christiansen.Licht:20}\fi.
  Complexes of discrete distributional differential forms express a notion of compatibility that is relieved from global conformity requirements through the presence of ``jump terms'' in the distributions (with conforming finite elements corresponding to the special case where such terms vanish).
A major difference with respect to the present contribution is that we work in a non-compatible setting, i.e., there is no underlying exact discrete complex. Specifically, the sole result related to (discrete) compatibility leveraged in the proof of the discrete Weber inequality is a decomposition of vector-valued polynomial functions inside each mesh element (see Lemma \ref{lemma:ineq.decomp} below). Another important difference between the two works is that our results naturally apply to general polyhedral meshes.

In the second part of this paper, we tackle the HHO approximation of magnetostatics, in both its (first-order) field formulation and (second-order, generalized) vector potential formulation. Various discretization methods have been studied in the literature to approximate the magnetostatics equations (or, more generally, Maxwell equations). 
Conforming finite element discretizations were originally proposed (on tetrahedra, essentially) in the seminal work of N\'ed\'elec~\cite{Nedelec:80,Nedelec:86} (cf.~also~\ifMMMAS Ref.~\citen{Monk:03}\else\cite{Monk:03}\fi). We also mention~\ifMMMAS Ref.~\citen{Arnold:18}\else\cite{Arnold:18}\fi, in which a unified presentation of conforming finite element methods based on notions from algebraic topology is provided.
Nonconforming discretizations include, on standard meshes, the Discontinuous Galerkin method of~\ifMMMAS Ref.~\citen{Perugia.ea:02} \else\cite{Perugia.ea:02} \fi as well as the Hybridizable Discontinuous Galerkin method of~\ifMMMAS Refs.~\citen{Nguyen.ea:11} and \citen{Chen.Cui.Xu:19} \else\cite{Nguyen.ea:11,Chen.Cui.Xu:19} \fi and, on general polyhedral meshes, the variant~\ifMMMAS from Ref.~\citen{Mu.Wang.ea:15} \else\cite{Mu.Wang.ea:15} \fi of~\ifMMMAS the method of Ref.~\citen{Nguyen.ea:11} \else\cite{Nguyen.ea:11} \fi and the method of~\ifMMMAS Ref.~\citen{Chen.ea:17}\else\cite{Chen.ea:17}\fi; see also~\ifMMMAS Ref.~\citen{Du.ea:20}\else\cite{Du.ea:20}\fi.
Methods that support general polyhedral meshes and are built upon discrete spaces that mimick the continuity properties of the spaces appearing in the continuous weak formulation include the Virtual Element methods of~\ifMMMAS Refs.~\citen{Beirao.ea:18a}, \citen{Beirao.ea:18b} and \citen{Beirao.ea:18c}\else\cite{Beirao.ea:18a,Beirao.ea:18b,Beirao.ea:18c}\fi, and the fully discrete method of~\ifMMMAS Ref.~\citen{Di-Pietro.Droniou:20*1} \else\cite{Di-Pietro.Droniou:20*1} \fi based on the discrete de Rham sequence of~\ifMMMAS Ref.~\citen{Di-Pietro.Droniou.ea:20} \else\cite{Di-Pietro.Droniou.ea:20} \fi (see also~\ifMMMAS Ref.~\citen{Di-Pietro.Droniou:21} \else\cite{Di-Pietro.Droniou:21} \fi for recent developments including error estimates).
All the hybridized nonconforming methods cited above deal with the approximation of magnetostatics under its (generalized) vector potential formulation. In this paper, we first study an HHO method (which has been briefly introduced in~\ifMMMAS Ref.~\citen{Chave.ea:20}\else\cite{Chave.ea:20}\fi) for magnetostatics under its field formulation. We take advantage of the fact that the corresponding problem is first-order to avoid locally reconstructing a discrete ${\bf curl}$ operator as it has to be done for second-order problems (cf.~Remark~\ref{rem:curl}). Doing so, we propose a computationally inexpensive and easy-to-implement method. Second, we study an HHO method for magnetostatics under its (generalized) vector potential formulation, that can be seen as a computationally cheaper variant of the method introduced in~\ifMMMAS Ref.~\citen{Chen.ea:17} \else\cite{Chen.ea:17} \fi (cf.~Remark~\ref{rem:variant}). Our two HHO methods are applicable on general polyhedral meshes, and allow for an arbitrary order of approximation $k\geq 0$ with proved energy-error of order $k+1$ (cf.~Theorems~\ref{thm:standard.error} and~\ref{thm:error}). Leveraging the previously established discrete Weber inequality, we carry out a comprehensive analysis of the methods, and validate them on a set of test-cases.

The article is organized as follows. In Section~\ref{se:weber}, we prove the discrete Weber inequality.
Then, in Sections~\ref{sse:field} and~\ref{sec:general}, we tackle the HHO approximation of magnetostatics under its field and vector potential formulations, respectively.

%-----------------------------------------------------------------%

\section{A discrete Weber inequality on hybrid spaces} \label{se:weber}

\subsection{Discrete setting} \label{se:disset}

We consider a polyhedral mesh $\mathcal{M}_h=(\t_h,\f_h)$, that is assumed to belong to a regular mesh sequence in the sense of~\cite[Definition 1.9]{Di-Pietro.Droniou:20}.
The set $\t_h$ is a finite collection of nonempty, disjoint, open polyhedra $T$ (called mesh elements) such that $\overline{\Omega} = \bigcup_{T\in\t_h} \overline{T}$. For convenience reasons that will be made clear in Remark~\ref{rem:role.star-shaped} below, we henceforth assume that any mesh element $T\in\t_h$ is star-shaped with respect to some interior point $\vec{x}_T$. Note that this assumption is not necessary for the results of this article to hold true.
The subscript $h$ refers to the meshsize, defined by $h \defi \max_{T\in\t_h} h_T$, where $h_T$ denotes the diameter of the element $T$. The set $\f_h$ collects the planar mesh faces and, for all $T\in\t_h$, we denote by $\f_T$ the set of faces that lie on the boundary of $T$.
Boundary faces lying on $\partial\Omega$ are collected in the set $\f_h^{\text{b}}$, and we denote by $\f_h^{\text{i}}\coloneq\f_h\setminus\f_h^{\text{b}}$ the set of interfaces.
For all $F\in\f_h$, we let $h_F$ denote its diameter and, for all $T\in\t_h$ and all $F\in\f_T$, $\normal_{TF}$ denote the unit normal vector to $F$ pointing out of $T$.
We recall that, since ${\cal M}_h$ belongs to a regular mesh sequence, for all $T\in\t_h$, the quantity ${\rm card}(\f_T)$ is bounded from above uniformly in $h$ and, for all $T\in\t_h$ and all $F\in\f_T$, $h_F$ is uniformly comparable to $h_T$ (cf.~\cite[Lemma 1.12]{Di-Pietro.Droniou:20}).
In what follows, we will use the symbol $\lesssim$ to indicate that an estimate is valid up to a multiplicative constant that may depend on the mesh regularity parameter, the ambient dimension, and (if need be) the polynomial degree, but that is independent of $h$.

\subsection{Hybrid spaces} \label{sse:hybrid}

For $q\in\Natural$ and $(X,\ell)\in\{(\f_h,\{2\}),(\t_h,\{3\})\}$, we respectively denote by $\Poly^q(X)$ and $\boldsymbol\Poly^q(X)$ the vector spaces of $\ell$-variate, scalar-valued and $\mathbb{R}^\ell$-valued polynomial functions on $X$ of total degree at most $q$.
For future use, for any $T\in\t_h$, we let $\boldsymbol{{\cal G}}^q(T)\defi{\bf grad}\big(\Poly^{q+1}(T)\big)$ and $\boldsymbol{{\cal R}}^q(T)\defi{\bf curl}\big(\boldsymbol\Poly^{q+1}(T)\big)$ (the notation $\boldsymbol{{\cal R}}$ standing for ``rot''), and we recall that the following (nonorthogonal) decomposition holds true:
\begin{equation} \label{eq:decomp}
  \boldsymbol\Poly^q(T)=\boldsymbol{{\cal G}}^q(T)\oplus(\vec{x}-\vec{x}_T){\times}\,\boldsymbol{{\cal R}}^{q-1}(T),
\end{equation}
with the convention that $\boldsymbol{{\cal R}}^{-1}(T)\defi\{\vec{0}\}$.
For any $F\in\f_h$, we also let
$$\boldsymbol{{\cal G}}^{q}(F)\defi{\bf grad}_{\tau}\big(\Poly^{q+1}(F)\big)\subseteq\boldsymbol\Poly^{q}(F)$$
denote the space of (tangential) gradients of polynomials of total degree up to $q+1$ on $F$.
Finally, we define the broken spaces
\begin{align*}
  \Poly^q(\t_h)&\defi\left\{v\in L^2(\Omega)\st v_{\mid T}\in\Poly^q(T)\quad\forall T\in\t_h\right\},
  \\
  \boldsymbol\Poly^q(\t_h)&\defi\left\{\vec{v}\in \boldsymbol L^2(\Omega;\mathbb{R}^3)\st\vec{v}_{\mid T}\in\boldsymbol\Poly^q(T)\quad\forall T\in\t_h\right\},
\end{align*}
as well as the broken subspaces $\boldsymbol{{\cal G}}^q({\t_h})\defi{\bf grad}_h\big(\Poly^{q+1}(\t_h)\big)$ and $\boldsymbol{{\cal R}}^q({\t_h})\defi{\bf curl}_h\big(\boldsymbol\Poly^{q+1}(\t_h)\big)$, where $\Grad_h$ (resp.~$\Curl_h$) denotes the usual broken $\Grad$ (resp.~$\Curl$) operator on $H^1(\t_h)$ (resp.~$\Hcurl[\t_h]$).

Let an integer polynomial degree $k\ge 0$ be given. We introduce the following (global) hybrid spaces:
\begin{subequations}\label{def:DOFs}
  \begin{alignat}{1}\label{def:DOFs.Xh}
  	\Xh &\defi\left\{ \vh = \big((\vT)_{T\in\t_h},(\vF)_{F\in\f_h}\big) \st 
  	  \begin{alignedat}{2}
  		\vT&\in \PkpdT &\quad& \forall T\in\t_h
  		\\
  		\vF&\in\boldsymbol{{\cal G}}^{k+1}(F) &\quad& \forall F\in\f_h
  	  \end{alignedat}
  	\right\},
  	\\\label{def:DOFs.Yh}
  	\Yh & \defi \left\{ \qh = \big((\qT)_{T\in\t_h},(\qF)_{F\in\f_h}\big)\st 
  	  \begin{alignedat}{2}
  	  	\qT&\in\PkT &\quad& \forall T\in\t_h
  	  	\\
  	  	\qF&\in\PkpF &\quad& \forall F\in\f_h
  	  \end{alignedat}
  	\right\},
  \end{alignat}
\end{subequations}
as well as their subspaces incorporating homogeneous essential boundary conditions:
\begin{equation}\label{def:DOFs.Xhz}
	\begin{aligned}
		\Xhz &\defi \left\{\vh\in\Xh\st
			\vF = \boldsymbol{0} \quad\forall F\in\f_h^{\rm b}		
		\right\},
		\\
		\Yhz &\defi \left\{\qh\in\Yh\st
			\qF = 0 \quad\forall F\in\f_h^{\rm b}		
		\right\}.
	\end{aligned}
\end{equation}
(Semi)norms on the above spaces are defined in \eqref{def:norms.h:curl} and \eqref{eq:norm.Y.h} (or~\eqref{def:norms.general:grad}) below.
In~\eqref{def:DOFs.Xh}, the face unknowns for the vectorial variable lie in a strict subspace of $\boldsymbol\Poly^{k+1}(F)$, that is $\boldsymbol{{\cal G}}^{k+1}(F)$. This choice is driven by stability purposes. Since the ${\bf curl}$ operator has a kernel on $\boldsymbol\Poly^{k+1}(T)$ that is composed of all polynomials in $\boldsymbol{{\cal G}}^{k+1}(T)$ (this will be justified by Lemma~\ref{lemma:ineq.decomp} below), we must be able to control the (jump of the) tangential traces of all functions in that space at interfaces, which is the reason why the face unknowns must (at least) lie in $\boldsymbol{{\cal G}}^{k+1}(F)$. As far as~\eqref{def:DOFs.Yh} is concerned, the necessity to consider face unknowns in $\Poly^{k+1}(F)$ for the scalar variable is driven by the necessity to reconstruct a ${\bf grad}$ operator in $\PkpdT$ (in turn driven by the necessity to test this discrete ${\bf grad}$ operator against element unknowns for the vectorial variable themselves in $\PkpdT$); cf.~Section~\ref{sec:grad.recons} below.
Given a mesh element $T\in\t_h$, we respectively denote by $\XT$ and $\YT$ the restrictions of $\Xh$ and $\Yh$ to $T$, and by $\vTF\defi\big(\vT,(\vF)_{F\in\f_T}\big)\in\XT$ and $\qTF\defi\big(\qT,(\qF)_{F\in\f_T}\big)\in\YT$ the respective restrictions of generic vectors of polynomials $\vh\in\Xh$ and $\qh\in\Yh$. 
Also, we let $\vTh$ and $\qTh$ (not underlined) be the broken polynomial functions in $\PkpdTh$ and in $\PkTh$ such that
$$(\vTh)_{\mid T} \defi \vT \quad\text{ and }\quad (\qTh)_{\mid T} \defi \qT \quad\text{ for all } T\in\t_h.$$
We next define the interpolators on $\Xh$ and $\Yh$.
To this purpose, we need some preliminary definitions.
  For any open set $X\subseteq\Omega$ and any $F\in\f_h$ such that $F\subset\overline{X}$, $\vec{\gamma}_{\tau,F}(\boldsymbol{v})\in \boldsymbol L^2(F;\mathbb{R}^2)$ denotes the tangential trace on $F$ of $\boldsymbol{v}\in \boldsymbol H^1(X;\mathbb{R}^3)$.
  For $q\in\mathbb{N}$ and $(X,\ell)\in\{(\f_h,\{2\}),(\t_h,\{3\})\}$, we respectively denote by $\pi_{{\cal P},X}^{q}$ and $\vec{\pi}_{\boldsymbol{\cal P}, X}^q$ the $L^2(X)$-orthogonal projector onto $\Poly^q(X)$ and the $\boldsymbol L^2(X;\mathbb{R}^\ell)$-orthogonal projector onto~$\boldsymbol \Poly^q(X)$.
  Similarly, $\vec{\pi}_{\boldsymbol{{\cal G}},F}^{q}$ stands for the $\boldsymbol L^2(F;\mathbb{R}^2)$-orthogonal projector onto $\boldsymbol{{\cal G}}^{q}(F)$ and, for $\boldsymbol{{\cal S}}\in\{\boldsymbol{{\cal G}},\boldsymbol{{\cal R}}\}$, $\vec{\pi}_{\boldsymbol{{\cal S}},T}^q$ for the $\boldsymbol L^2(T;\mathbb{R}^3)$-orthogonal projector onto $\boldsymbol{{\cal S}}^q(T)$.
  We finally let $\IntXh: \boldsymbol H^1(\Omega;\mathbb{R}^3)\rightarrow\Xh$ and $\IntYh: H^1(\Omega)\rightarrow\Yh$ be such that, for all $\boldsymbol{v}\in \boldsymbol H^1(\Omega;\mathbb{R}^3)$ and all $q\in H^1(\Omega)$,
\begin{subequations}
  \begin{alignat}{1}\label{def:Icurl.h}
    \IntXh \boldsymbol{v} &\defi \left(
    \big({\vec{\pi}}_{\boldsymbol{\cal P},T}^{k+1}(\boldsymbol{v}_{\mid T})\big)_{T\in\t_h},
    \big({\vec{\pi}}_{\boldsymbol{{\cal G}},F}^{k+1}\big(\vec{\gamma}_{\tau,F}(\boldsymbol{v})\big)\big)_{F\in\f_h}
    \right),
	\\\label{def:Igrad.h}
	\IntYh q &\defi \left(\big({\pi}_{{\cal P},T}^{k}(q_{\mid T})\big)_{T\in\t_h},\big({\pi}_{{\cal P},F}^{k+1}(q_{\mid F})\big)_{F\in\f_h}\right).
  \end{alignat}
\end{subequations}
For future use, we close this section by introducing the global $L^2(\Omega)$-orthogonal projector $\pi_{{\cal P},h}^{q}$, and $\boldsymbol{L}^2(\Omega;\mathbb{R}^3)$-orthogonal projectors $\vec{\pi}_{\boldsymbol{\cal P},h}^q$ and $\vec{\pi}_{\boldsymbol{{\cal S}},h}^q$ onto, respectively, $\Poly^q(\t_h)$, $\boldsymbol\Poly^q(\t_h)$ and $\boldsymbol{{\cal S}}^q({\t_h})$.

\subsection{Gradient reconstruction in $\Yh$}\label{sec:grad.recons}

We define the global discrete gradient reconstruction operator $\Gkh:\Yh\rightarrow \PkpdTh$ such that its local restriction $\GkT:\YT\rightarrow \PkpdT$ to any $T\in\t_h$ solves the following problem: For all $\qTF\in\YT$,
\begin{equation}\label{def:GTk}
	\big(\GkT \qTF,\vec{w}\big)_T = -(\qT,\Div\vec{w})_T + \sum_{F\in\f_T}(\qF,\vec{w}_{\mid F}{\cdot}\normal_{TF})_F\quad\forall\vec{w}\in\PkpdT.
\end{equation}
By the Riesz representation theorem in $\PkpdT$ equipped with the $\boldsymbol L^2(T;\mathbb{R}^3)$-inner product, $\GkT \qTF$ is uniquely defined.
We note the following commutation property (see, e.g., \cite[Section 4.2.1]{Di-Pietro.Droniou:20}):
For all $q\in H^1(\Omega)$, we have
\begin{equation}\label{lem:commut.GTkITk.eq}
  \Gkh\big(\IntYh q\big) = \vec{\pi}_{\boldsymbol{\cal P},h}^{k+1} (\Grad q).
\end{equation}
We also have the following result in the tetrahedral case, which can be exploited to simplify the HHO schemes of Sections \ref{sse:field} and \ref{sec:general} on matching tetrahedral meshes; see Remarks \ref{rk:tet.case.1} and \ref{rk:tet.case} below.
\begin{lemma}[Norm $\|\Gkh\cdot\|_{\Omega}$]\label{le:GTk.norm}
  Let $\t_h$ be a matching tetrahedral mesh. Then, the map $\|\Gkh\cdot\|_{\Omega}$ defines a norm on $\Yhz$.
\end{lemma}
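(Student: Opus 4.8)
The plan is to show that $\|\Gkh\cdot\|_{\Omega}$ is nonnegative, homogeneous, and satisfies the triangle inequality for free (it is the composition of a linear map with a genuine norm), so the only nontrivial point is \emph{definiteness}: if $\qh\in\Yhz$ satisfies $\Gkh\qh=\vec{0}$, then $\qh=\underline{\rm 0}$. Equivalently, working element by element via the local relation \eqref{def:GTk}, we must show that if $\GkT\qTF=\vec{0}$ for every $T\in\t_h$ and $\qF=0$ on every boundary face, then all element components $\qT$ and all face components $\qF$ vanish.

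First I would fix $T\in\t_h$ and rewrite \eqref{def:GTk} with $\GkT\qTF=\vec{0}$: for all $\vec{w}\in\PkpdT$,
\begin{equation*}
  -(\qT,\Div\vec{w})_T + \sum_{F\in\f_T}(\qF,\vec{w}_{\mid F}{\cdot}\normal_{TF})_F = 0.
\end{equation*}
Integrating by parts the first term (legitimate since $\qT$ is polynomial, hence smooth on $T$) gives $(\Grad\qT,\vec{w})_T - \sum_{F\in\f_T}(\qT,\vec{w}_{\mid F}{\cdot}\normal_{TF})_F + \sum_{F\in\f_T}(\qF,\vec{w}_{\mid F}{\cdot}\normal_{TF})_F = 0$, i.e.
\begin{equation*}
  (\Grad\qT,\vec{w})_T + \sum_{F\in\f_T}(\qF-\qT,\vec{w}_{\mid F}{\cdot}\normal_{TF})_F = 0 \qquad \forall\vec{w}\in\PkpdT.
\end{equation*}
The key structural input is that $\t_h$ is a \emph{matching tetrahedral} mesh: on a tetrahedron, I would choose $\vec{w}=\Grad\qT\in\boldsymbol\Poly^{k}(T)\subset\PkpdT$ to conclude $\|\Grad\qT\|_T^2 = -\sum_{F\in\f_T}(\qF-\qT,(\Grad\qT)_{\mid F}{\cdot}\normal_{TF})_F$; and, exploiting that $(\Grad\qT)_{\mid F}{\cdot}\normal_{TF}$ is a \emph{constant} on each face $F$ of the tetrahedron (the gradient of $\qT$ restricted to $F$ is a polynomial of degree $k-1$ — wait, more carefully: $\Grad\qT\in\PkdT[k-1]$, and the normal component has no reason to be constant in general, so the cleaner route is to test with a well-chosen $\vec{w}$ supported near one face). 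The genuinely clean choice on a simplex is the following: for a fixed $F_0\in\f_T$, pick $\vec{w}$ so that $\vec{w}_{\mid F}{\cdot}\normal_{TF}$ can be made to match $\qF-\qT$ on $F_0$ and controlled on the others, using the degree budget $k+1$; more precisely, on a tetrahedron one can build, for each face $F_0$, a polynomial $\vec{w}\in\PkpdT$ whose normal traces on the faces in $\f_T$ realize any prescribed element of $\prod_{F\in\f_T}\Poly^{k+1}(F)$ that is compatible at the edges — and since we only need to extract enough information, we test against $\vec{w}=\Grad\qT$ together with carefully chosen perturbations to separate the face contributions. The upshot I expect is: $\Grad\qT=\vec{0}$, so $\qT$ is constant on each $T$, and then the face relation forces $\qF$ to equal that same constant on each $F\in\f_T$.

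The remaining step is a connectivity argument. Once each $\qT$ is a constant $c_T$ and each $\qF$ is the constant $c_T$ for every $T$ containing $F$, single-valuedness of $\qF$ across an interface $F\in\f_h^{\rm i}$ shared by $T_1,T_2$ forces $c_{T_1}=c_{T_2}$; since $\t_h$ is connected (as $\Omega$ is connected), all $c_T$ coincide with a single constant $c$, and all interface values $\qF$ equal $c$. Finally, for any boundary face $F\in\f_h^{\rm b}$ we have $\qF=0$ by the definition of $\Yhz$, and $\qF=c_T=c$ for the unique $T$ containing $F$; hence $c=0$, so $\qh=\underline{\rm 0}$, which proves definiteness and completes the proof.

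The main obstacle is the local step on a tetrahedron: proving that $\GkT\qTF=\vec{0}$ alone (before using boundary/connectivity) forces $\qT$ constant and $\qF=\qT_{\mid F}$. This hinges on the special feature of simplices that the normal-trace map from $\PkpdT$ onto the face polynomial spaces is surjective enough (in fact, on a simplex, $\boldsymbol\Poly^{k+1}(T)$ restricted to normal traces hits all of $\prod_{F\in\f_T}\Poly^{k+1}(F)$ modulo the obvious compatibility, which is exactly why the statement is restricted to matching tetrahedral meshes — it is false on general polyhedra, consistent with the fact that $\|\Gkh\cdot\|_\Omega$ is only a seminorm there, and the genuine HHO norm needs the stabilization term). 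A convenient way to make this rigorous is to invoke the classical local exactness of the Raviart--Thomas/Nédélec-type construction on a simplex, or simply a dimension count: on a tetrahedron, $\dim\PkpdT = 3\binom{k+4}{3}$ while $\dim(\Grad\Poly^{k+2}(T)) = \binom{k+4}{3}-1$ and the normal traces account for the rest, giving the surjectivity needed to pick test functions isolating $\Grad\qT$ and each jump $\qF-\qT$ separately.
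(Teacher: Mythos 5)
Your overall strategy is exactly the paper's: integrate by parts in \eqref{def:GTk} under the hypothesis $\GkT\qTF=\vec{0}$ to obtain the local identity $(\Grad\qT,\vec{w})_T+\sum_{F\in\f_T}(\qF-{\rm q}_{T\mid F},\vec{w}_{\mid F}{\cdot}\normal_{TF})_F=0$ for all $\vec{w}\in\boldsymbol\Poly^{k+1}(T)$, kill $\Grad\qT$ and the face differences by a choice of test function made possible by the simplicial geometry, then conclude by connectivity of the mesh and the boundary condition in $\Yhz$. The one step you leave open --- and correctly flag as the main obstacle --- is closed in the paper by the unisolvence of the N\'ed\'elec element of the second kind (equivalently, the BDM element) of degree $k+1$ on a tetrahedron: one takes the \emph{single} test function $\overline{\vec{w}}\in\boldsymbol\Poly^{k+1}(T)$ defined by
\begin{align*}
(\overline{\vec{w}},\vec{p})_T&=(\Grad\qT,\vec{p})_T &&\forall\vec{p}\in\boldsymbol{{\cal N}}^k(T),\\
(\overline{\vec{w}}_{\mid F}{\cdot}\normal_{TF},r)_F&=(\qF-{\rm q}_{T\mid F},r)_F &&\forall r\in\Poly^{k+1}(F),\ \forall F\in\f_T,
\end{align*}
where $\boldsymbol{{\cal N}}^k(T)$ is the first-kind N\'ed\'elec space; since $\Grad\qT\in\boldsymbol{{\cal G}}^{k-1}(T)\subseteq\boldsymbol{{\cal N}}^k(T)$ and $\qF-{\rm q}_{T\mid F}\in\Poly^{k+1}(F)$, testing the local identity with $\overline{\vec{w}}$ yields $\|\Grad\qT\|_T^2+\sum_{F\in\f_T}\|\qF-{\rm q}_{T\mid F}\|_F^2=0$ in one stroke, with no need to ``separate the face contributions'' by perturbations. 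Two small corrections to your discussion of that step: the normal traces of a $\boldsymbol{H}(\Div)$-type element on a simplex carry \emph{no} edge-compatibility constraint (the four face traces can be prescribed independently in $\prod_{F\in\f_T}\Poly^{k+1}(F)$; edge compatibility is an $H^1$-conformity issue), and the relevant dimension count pairs the $4\dim\Poly^{k+1}(F)$ face moments with interior moments against $\boldsymbol{{\cal N}}^k(T)$, not against $\Grad\big(\Poly^{k+2}(T)\big)$. Your connectivity/boundary argument, and your explanation of why the conclusion fails on general polyhedra, are both as in the paper.
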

\begin{proof}
  Let $\t_h$ be a matching tetrahedral mesh, and $\qh\in\Yhz$ be such that $\|\Gkh\qh\|_{\Omega}=0$.
  Then, for all $T\in\t_h$, enforcing $\GkT\qTF=\boldsymbol{0}$ in the definition~\eqref{def:GTk} of this quantity and integrating by parts yields
  \begin{equation} \label{eq:pre.rob}
    \big(\Grad\qT,\vec{w}\big)_T+\sum_{F\in\f_T}\big(\qF-{\rm q}_{T\mid F},\vec{w}_{\mid F}{\cdot}\normal_{TF}\big)_F=0\qquad\forall\vec{w}\in\boldsymbol\Poly^{k+1}(T).
  \end{equation}
  Let $\boldsymbol{{\cal N}}^k(T)\defi\boldsymbol{{\cal G}}^{k-1}(T)\oplus(\vec{x}-\vec{x}_T){\times}\,\boldsymbol{{\cal R}}^{k-1}(T)$ (with the convention that $\boldsymbol{{\cal G}}^{-1}(T)\defi\{\boldsymbol{0}\}$) denote the N\'ed\'elec space of the first kind of degree $k$ on $T$ (cf.~\ifMMMAS Ref.~\citen{Nedelec:80}\else\cite{Nedelec:80}\fi), and let $\overline{\vec{w}}\in\boldsymbol\Poly^{k+1}(T)$ be such that
  \begin{subequations} \label{eq:bdm}
    \begin{alignat}{2}
      (\overline{\vec{w}},\vec{p})_T&=(\Grad\qT,\vec{p})_T\qquad&\forall\vec{p}\in\boldsymbol{{\cal N}}^k(T),\\
      (\overline{\vec{w}}_{\mid F}{\cdot}\normal_{TF},r)_F&=(\qF-{\rm q}_{T\mid F},r)_F\qquad&\forall r\in\Poly^{k+1}(F),\quad\forall F\in\f_T.
    \end{alignat}
  \end{subequations}
  The system~\eqref{eq:bdm} uniquely defines $\overline{\vec{w}}$ as a function of the N\'ed\'elec space of the second kind of degree $k+1$ on $T$, that is $\boldsymbol\Poly^{k+1}(T)$ (cf.~\ifMMMAS Refs.~\citen{Nedelec:86} and~\citen{Brezzi.ea:85}\else\cite{Nedelec:86,Brezzi.ea:85}\fi).
  Testing~\eqref{eq:pre.rob} with $\overline{\vec{w}}$, and using that $\Grad\qT\in\boldsymbol{{\cal G}}^{k-1}(T)\subseteq\boldsymbol{{\cal N}}^k(T)$ and that $\qF-{\rm q}_{T\mid F}\in\Poly^{k+1}(F)$ for all $F\in\f_T$, we infer from~\eqref{eq:bdm} that $\|\Grad\qT\|_T^2+\sum_{F\in\f_T}\|\qF-{\rm q}_{T\mid F}\|_F^2=0$.
  Reproducing the same reasoning on all $T\in\t_h$, and using that $\qh$ belongs to the space $\Yhz$ with strongly enforced boundary conditions, finally yields $\qh=\underline{0}_h$.
\end{proof}

\subsection{Discrete Weber inequality} \label{sse:dwi}

Let us begin with a preliminary technical result.

\begin{lemma}[Polynomial decomposition]\label{lemma:ineq.decomp}
  Let $q\in\mathbb{N}$.
  Let $T\in\t_h$ and $\vec{p}\in\boldsymbol\Poly^q(T)$.
    Then, there exist $g\in\Poly^{q+1}(T)$ and $\vec{c}\in\boldsymbol\Poly^q(T)$ such that $\vec{p} = \Grad g + {(\boldsymbol{x}-\boldsymbol{x}_T)}\times\Curl \vec{c}$ and
	\begin{equation}\label{lemma:ineq.decomp.eq}
		\|\vec{p}-\Grad g\|_{T} \leq 2 h_T\|\Curl \vec{p}\|_{T}.
	\end{equation}
\end{lemma}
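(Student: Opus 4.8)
The plan is to start from the non-orthogonal decomposition~\eqref{eq:decomp}, which gives $\vec{p} = \Grad g + (\vec{x}-\vec{x}_T)\times\vec{r}$ for some $g\in\Poly^{q+1}(T)$ and some $\vec{r}\in\boldsymbol{{\cal R}}^{q-1}(T)$; writing $\vec{r} = \Curl\vec{c}$ for some $\vec{c}\in\boldsymbol\Poly^q(T)$ then produces the claimed form $\vec{p} = \Grad g + (\vec{x}-\vec{x}_T)\times\Curl\vec{c}$. So the existence part is immediate from~\eqref{eq:decomp}; the real content is the quantitative bound~\eqref{lemma:ineq.decomp.eq}, i.e.\ controlling $\|(\vec{x}-\vec{x}_T)\times\Curl\vec{c}\|_T$ by $2h_T\|\Curl\vec{p}\|_T$.

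The first key observation is that $\Curl(\Grad g) = \vec{0}$, so $\Curl\vec{p} = \Curl\big((\vec{x}-\vec{x}_T)\times\vec{r}\big)$ with $\vec{r}=\Curl\vec{c}$; in particular $\Curl\vec{p}$ depends only on the second summand. The plan is then to apply the curl identity $\Curl\big(\vec{a}\times\vec{b}\big) = \vec{a}(\Div\vec{b}) - \vec{b}(\Div\vec{a}) + (\vec{b}\cdot\nabla)\vec{a} - (\vec{a}\cdot\nabla)\vec{b}$ with $\vec{a} = \vec{x}-\vec{x}_T$ (so $\Div\vec{a}=3$ and $(\vec{b}\cdot\nabla)\vec{a} = \vec{b}$) and $\vec{b} = \vec{r} = \Curl\vec{c}$ (so $\Div\vec{b}=0$). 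This gives $\Curl\big((\vec{x}-\vec{x}_T)\times\vec{r}\big) = 4\vec{r} - \big((\vec{x}-\vec{x}_T)\cdot\nabla\big)\vec{r} = 4\vec{r} - \mathcal{D}\vec{r}$ where $\mathcal{D}$ is the Euler (radial-derivative) operator $\vec{y}\mapsto\big((\vec{x}-\vec{x}_T)\cdot\nabla\big)\vec{y}$. On homogeneous polynomials $\mathcal{D}$ acts as multiplication by the degree, so on $\boldsymbol\Poly^{q-1}(T)$ its eigenvalues lie in $\{0,1,\dots,q-1\}$, whence $4\,\mathrm{Id}-\mathcal{D}$ has eigenvalues in $\{4-(q-1),\dots,4\}$ — but these can vanish or be negative when $q\ge 5$, so this direct route does not cleanly invert. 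Instead I expect the paper to recover $\vec{r}$ (or rather $(\vec{x}-\vec{x}_T)\times\vec{r}$) from $\Curl\vec{p}$ in an $L^2$-stable way by a scaling/homogeneity argument: after translating so $\vec{x}_T=\vec{0}$ and rescaling to the unit ball, one works with the linear map $\vec{r}\mapsto\Curl(\vec{x}\times\vec{r})$ restricted to $\boldsymbol{{\cal R}}^{q-1}(T)$, shows it is injective (its kernel would consist of $\vec{r}$ with $\vec{x}\times\vec{r}\in\boldsymbol{{\cal G}}^q(T)$, contradicting the directness of~\eqref{eq:decomp} unless $\vec{x}\times\vec{r}=\vec{0}$, and $\vec{x}\times\Curl\vec{c}=\vec{0}$ with $\Curl\vec{c}$ polynomial forces $\Curl\vec{c}=\vec{0}$), and then one bounds $\|(\vec{x}-\vec{x}_T)\times\vec{r}\|_T$ in terms of $\|\Curl\vec{p}\|_T$ with an explicit constant.

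The cleanest way to get the explicit constant $2h_T$ is likely the following. Write $\vec{\phi} \defi (\vec{x}-\vec{x}_T)\times\Curl\vec{c} = \vec{p}-\Grad g$. From the computation above, $\Curl\vec{\phi} = 4\Curl\vec{c} - \mathcal{D}(\Curl\vec{c})$. Now test this against $\vec{\psi}$ chosen so that $(\vec{x}-\vec{x}_T)\cdot\vec{\psi}$ relates to $\vec{\phi}$: using the pointwise identity $\big((\vec{x}-\vec{x}_T)\times\Curl\vec{c}\big)\cdot\big((\vec{x}-\vec{x}_T)\times\Curl\vec{c}\big) = |\vec{x}-\vec{x}_T|^2|\Curl\vec{c}|^2 - \big((\vec{x}-\vec{x}_T)\cdot\Curl\vec{c}\big)^2 \le h_T^2|\Curl\vec{c}|^2$, we get $\|\vec{\phi}\|_T \le h_T\|\Curl\vec{c}\|_T$, so it suffices to show $\|\Curl\vec{c}\|_T \le 2\|\Curl\vec{p}\|_T$. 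For this, pair $\Curl\vec{p} = 4\Curl\vec{c} - \mathcal{D}(\Curl\vec{c})$ with $\Curl\vec{c}$ in $L^2(T;\mathbb{R}^3)$: the term $-\int_T \mathcal{D}(\Curl\vec{c})\cdot\Curl\vec{c}$ can be integrated by parts, since $2\,\mathcal{D}(\vec{y})\cdot\vec{y} = (\vec{x}-\vec{x}_T)\cdot\nabla|\vec{y}|^2 = \Div\big(|\vec{y}|^2(\vec{x}-\vec{x}_T)\big) - 3|\vec{y}|^2$, yielding $-\int_T\mathcal{D}(\Curl\vec{c})\cdot\Curl\vec{c} = \tfrac32\|\Curl\vec{c}\|_T^2 - \tfrac12\int_{\partial T}|\Curl\vec{c}|^2(\vec{x}-\vec{x}_T)\cdot\normal$. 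Since $T$ is star-shaped with respect to $\vec{x}_T$, the boundary integrand $(\vec{x}-\vec{x}_T)\cdot\normal \ge 0$, so $-\int_T\mathcal{D}(\Curl\vec{c})\cdot\Curl\vec{c} \le \tfrac32\|\Curl\vec{c}\|_T^2$, giving $(\Curl\vec{p},\Curl\vec{c})_T \le (4+\tfrac32)\|\Curl\vec{c}\|_T^2$ — not quite sharp. A slightly different bookkeeping (e.g.\ testing against $\Curl\vec{c}$ on the other side, or combining with a lower bound coming from $4\Curl\vec{c}$ and the sign of the boundary term) should bring the constant down: concretely, rearranging gives $\tfrac52\|\Curl\vec{c}\|_T^2 \le (\Curl\vec{p},\Curl\vec{c})_T + \tfrac12\int_{\partial T}|\Curl\vec{c}|^2(\vec{x}-\vec{x}_T)\cdot\normal$, and I would drop the manifestly nonnegative boundary term only after it appears with the favourable sign, or absorb it via a trace/scaling estimate, to land at $\|\Curl\vec{c}\|_T \le 2\|\Curl\vec{p}\|_T$ and hence~\eqref{lemma:ineq.decomp.eq}.

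The main obstacle I anticipate is precisely pinning down the constant $2$ rather than some larger mesh-regularity-dependent constant: the naive Cauchy–Schwarz/integration-by-parts pass gives $\tfrac52$ or $\tfrac{11}{2}$ depending on how one groups terms, and extracting exactly $2$ requires using the star-shapedness sign of $(\vec{x}-\vec{x}_T)\cdot\normal$ with the correct orientation and, crucially, the identity $\Curl\big((\vec{x}-\vec{x}_T)\times\Curl\vec{c}\big)$ in the form that makes the radial-derivative term cooperate. A secondary subtlety is ensuring the decomposition~\eqref{eq:decomp} is invoked with the right degree ($\vec{p}\in\boldsymbol\Poly^q(T)$ pairs with $\boldsymbol{{\cal R}}^{q-1}(T)$, and the convention $\boldsymbol{{\cal R}}^{-1}(T)=\{\vec{0}\}$ covers the trivial case $q=0$, where $\vec{p}=\Grad g$ and~\eqref{lemma:ineq.decomp.eq} is $0\le 0$).
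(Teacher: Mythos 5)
Your overall strategy is exactly the one the paper uses (decompose via~\eqref{eq:decomp}, bound the cross product pointwise by $h_T\|\Curl\vec{c}\|_T$, expand $\Curl\big((\vec{x}-\vec{x}_T)\times\Curl\vec{c}\big)$ with the vector calculus identity, and close with an energy/integration-by-parts argument exploiting star-shapedness), but there is a concrete sign error in the key computation that derails the quantitative part. With $\vec{a}=\vec{x}-\vec{x}_T$ and $\vec{b}=\Curl\vec{c}$, the identity you quote gives $\vec{a}(\Div\vec{b})-\vec{b}(\Div\vec{a})+(\vec{b}\cdot\nabla)\vec{a}-(\vec{a}\cdot\nabla)\vec{b}=\vec{0}-3\Curl\vec{c}+\Curl\vec{c}-\mathcal{D}(\Curl\vec{c})$, i.e.
\begin{equation*}
\Curl\vec{p}=-2\,\Curl\vec{c}-\big[(\vec{x}-\vec{x}_T)\cdot\Grad\big](\Curl\vec{c}),
\end{equation*}
not $4\Curl\vec{c}-\mathcal{D}(\Curl\vec{c})$: you flipped the sign on the $-\vec{b}(\Div\vec{a})$ term. (A sanity check: for constant $\vec{r}$ one has $\Curl(\vec{x}\times\vec{r})=-2\vec{r}$.) This error has two downstream effects. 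First, your worry that the operator "does not cleanly invert" for $q\ge 5$ is an artifact of it: on homogeneous divergence-free fields of degree $m$ the correct eigenvalues are $-(2+m)$, uniformly bounded away from zero, so the direct route works for every degree. Second, and more seriously, the sign of the boundary term in your energy identity comes out wrong. With your version, pairing with $\Curl\vec{c}$ yields $(\Curl\vec{p},\Curl\vec{c})_T=\tfrac{11}{2}\|\Curl\vec{c}\|_T^2-\tfrac12\int_{\partial T}|\Curl\vec{c}|^2(\vec{x}-\vec{x}_T)\cdot\normal$, where star-shapedness makes the subtracted term nonnegative; this gives only an \emph{upper} bound on the pairing and cannot produce the needed lower bound $\|\Curl\vec{c}\|_T^2\lesssim(\Curl\vec{p},\Curl\vec{c})_T$. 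Your proposed rearrangement to "$\tfrac52\|\Curl\vec{c}\|_T^2\le\cdots$" does not follow from your own identity.

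With the correct sign everything closes immediately and delivers the constant $2$ with no further work: multiplying by $-\Curl\vec{c}$ and integrating, your (correct) integration-by-parts formula gives
\begin{equation*}
-\big(\Curl\vec{p},\Curl\vec{c}\big)_T=\Big(2-\tfrac32\Big)\|\Curl\vec{c}\|_T^2+\tfrac12\int_{\partial T}|\Curl\vec{c}|^2\,(\vec{x}-\vec{x}_T)\cdot\normal\;\ge\;\tfrac12\|\Curl\vec{c}\|_T^2,
\end{equation*}
since star-shapedness now makes the boundary term enter with the favourable sign; Cauchy--Schwarz then yields $\|\Curl\vec{c}\|_T\le 2\|\Curl\vec{p}\|_T$, and combined with $\|\vec{p}-\Grad g\|_T\le h_T\|\Curl\vec{c}\|_T$ this is precisely~\eqref{lemma:ineq.decomp.eq}. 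So the gap is not in the architecture of your argument but in a single algebraic slip whose correction is exactly what makes the paper's proof (which is otherwise identical to your plan) go through.
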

\begin{proof}
  The decomposition $\vec{p} = \Grad g + {(\boldsymbol{x}-\boldsymbol{x}_T)}\times\Curl \vec{c}$ directly follows from~\eqref{eq:decomp}.
  Using the fact that ${|(\boldsymbol{x}-\boldsymbol{x}_T){\times}\Curl\vec{c}|}\leq{|\boldsymbol{x}-\boldsymbol{x}_T|}~{|\Curl\vec{c}|}\leq h_T|\Curl\vec{c}|$, we infer
  \begin{equation}\label{proof.ineq.decomp.0}
    \|\vec{p}-\Grad g\|_T = \|(\boldsymbol{x}-\boldsymbol{x}_T){\times}\Curl \vec{c}\|_T \leq h_T\|\Curl \vec{c}\|_T.
  \end{equation}
  We now focus on the right-hand side of \eqref{proof.ineq.decomp.0}.
  Since $\boldsymbol{\rm curl}\big(\Grad g\big)= \boldsymbol{0}$, we have
  \[
  \Curl \vec{p} = \Curl \big((\boldsymbol{x}-\boldsymbol{x}_T){\times} \Curl \vec{c}\big).
  \]
  Using the vector calculus identity $\Curl(\boldsymbol{A}{\times} \boldsymbol{B}) = \boldsymbol{A}(\Div \boldsymbol{B}) - [\boldsymbol{A}{\cdot}\Grad]\boldsymbol{B} - (\Div \boldsymbol{A})\boldsymbol{B} + [\boldsymbol{B}{\cdot}\Grad]\boldsymbol{A}$ with $\big(\boldsymbol{A},\boldsymbol{B}\big) = \big(\boldsymbol{x}-\boldsymbol{x}_T,\Curl \vec{c}\big)$, we get
  \begin{multline*}
    \Curl \vec{p}
    = \cancel{(\boldsymbol{x}-\boldsymbol{x}_T)\Div (\Curl \vec{c})} - [(\boldsymbol{x}-\boldsymbol{x}_T){\cdot}\Grad](\Curl \vec{c}) 
    \\- 3 \Curl \vec{c} + [(\Curl \vec{c}){\cdot}\Grad](\boldsymbol{x}-\boldsymbol{x}_T),
  \end{multline*}
  where we have used the fact that the $\Div$ of the $\Curl$ is zero in the cancellation.
  Hence, observing that $[(\Curl \vec{c}){\cdot}\Grad]{(\boldsymbol{x}-\boldsymbol{x}_T)}
  = \big(\sum_{j=1}^3(\Curl\vec{c})_j\partial_j x_i\big)_{1\le i\le 3}
  = \big(\sum_{j=1}^3(\Curl\vec{c})_j\delta_{ij}\big)_{1\le i\le 3}
  = \Curl \vec{c}$ (with $\bullet_j$ denoting the $j$-th component of the vector $\bullet$ and $\delta_{ij}$ the Kronecker symbol),
  \begin{equation}\label{proof.ineq.decomp.1}
    \Curl \vec{p} = -2\Curl \vec{c} -[(\boldsymbol{x}-\boldsymbol{x}_T){\cdot}\Grad](\Curl \vec{c}).
  \end{equation}
  Since $[(\boldsymbol{x}-\boldsymbol{x}_T){\cdot}\Grad]{(\Curl \vec{c})}=\big(\sum_{j=1}^3(\boldsymbol{x}-\boldsymbol{x}_T)_j\partial_j(\Curl\vec{c})_i\big)_{1\le i\le 3}$, multiplying~\eqref{proof.ineq.decomp.1} by $-\Curl\vec{c}$ and integrating over $T$, we get
  \begin{equation}\label{proof.ineq.decomp.2}
  \begin{aligned}
    -\big(\Curl\vec{p},\Curl\vec{c}\big)_T 
    &= 2 \|\Curl \vec{c}\|_{T}^2 + \big(\boldsymbol{x}-\boldsymbol{x}_T,\Grad\left(\frac{|\Curl \vec{c}|^2}{2} \right)\big)_T
    \\
    &= \frac12\|\Curl \vec{c}\|_{T}^2 + \sum_{F\in\f_T} \frac{1}{2}\big(|\Curl \vec{c}|_{\mid F}^2,(\boldsymbol{x}-\boldsymbol{x}_T)_{\mid F}{\cdot}\normal_{TF}\big)_F 
    \\
    &\geq \frac12\|\Curl \vec{c}\|_{T}^2\geq 0,
  \end{aligned}
  \end{equation}
  where we have used an integration by parts formula to pass to the second line, and the fact that $T$ is star-shaped with respect to $\boldsymbol{x}_T$ to conclude. 
 From~\eqref{proof.ineq.decomp.2} and a Cauchy--Schwarz inequality, we infer
  \begin{equation}\label{proof.ineq.decomp.3}
    \|\Curl \vec{c}\|_{T}
    \leq 
    2\|\Curl \vec{p}\|_{T}.
  \end{equation}
  Plugging \eqref{proof.ineq.decomp.3} into \eqref{proof.ineq.decomp.0}, \eqref{lemma:ineq.decomp.eq} follows.
\end{proof}

We equip the space $\Xh$ with the seminorm $\|\cdot\|_{\boldsymbol{{\rm X}},h}$ defined by
\begin{equation}\label{def:norms.h:curl}
	\|\vh\|_{\boldsymbol{{\rm X}},h}^2 \defi \|\Curl_h\!\vTh \|_\Omega^2+ \sum_{T\in\t_h}\sum_{F\in\f_T} h_F^{-1}\|\vec{\pi}^{k+1}_{\boldsymbol{{\cal G}},F}\big(\vec{\gamma}_{\tau,F}(\vT) - \vF \big) \|_F^2.
\end{equation}
  This seminorm is the discrete counterpart of the $\boldsymbol L^2(\Omega;\mathbb{R}^3)$-norm of the $\Curl$ and is composed of two contributions: the first one is the $\boldsymbol L^2(\Omega;\mathbb{R}^3)$-norm of the broken $\Curl$ of the function obtained patching the polynomials attached to mesh elements; the second one accounts for the difference between the face unknowns and the (gradient part of the) tangential trace of the element unknowns. The kernel of this seminorm is made clear by Remark~\ref{rem:control}.
On the discrete counterpart of the space $\Hzcurl\cap\Hdivz$ defined by
\begin{equation} \label{def:XXhz}
  \XXhz\defi\left\{\wh\in\Xhz\st\big(\wTh,\Gkh\qh\big)_\Omega=0\quad\forall\qh\in\Yhz\right\},
\end{equation}
the following discrete Weber inequality holds true.
  Here and in what follows, the circle overset will be reserved to those hybrid spaces that incorporate a discrete divergence-free property (see, in particular, \eqref{def:ZZhz} and \eqref{def:general.ZZhz} below).
\begin{theorem}[Discrete Weber inequality]\label{thm:max.ineq}
	There exists a constant $c_{\rm W}>0$ independent of $h$ such that, for all $\vh\in \XXhz$, one has
	\begin{equation}\label{thm:max.ineq.eq}
		\| \vTh\|_\Omega \leq c_{\rm W}\|\vh\|_{\boldsymbol{{\rm X}},h},
	\end{equation}
  where we remind the reader that the term in the left-hand side is the $\boldsymbol L^2(\Omega;\mathbb{R}^3)$-norm of the broken polynomial vector $\vTh$ such that $(\vTh)_{|T}=\vT$ for all $T\in\t_h$.
\end{theorem}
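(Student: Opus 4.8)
The plan is to mimic the continuous argument behind the Weber inequality~\eqref{eq:weber}, carried over to the discrete setting following the template of~\cite[Lemma~2.15]{Di-Pietro.Droniou:20}. At the continuous level, one controls $\|\vec v\|_\Omega$ by writing $\vec v = \Grad\psi + \Curl\vec\phi$ (a Helmholtz/regularity-based decomposition of the divergence-free part), testing against $\vec v$ itself, and using $(\vec v,\Grad\psi)_\Omega=0$ together with $\|\vec\phi\|$-type bounds by $\|\Curl\vec v\|_\Omega$. Discretely, I would: (i) given $\vh\in\XXhz$, build a suitable continuous (or piecewise-smooth) vector field whose relevant traces/moments match those of $\vh$, and which carries the regularity needed to estimate it by $\|\vh\|_{\boldsymbol{\rm X},h}$; (ii) exploit the discrete divergence-free constraint encoded in~\eqref{def:XXhz} to kill the gradient contribution when testing; (iii) handle the mismatch between element and face unknowns through the jump terms in the seminorm~\eqref{def:norms.h:curl}.

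**Key steps.** First I would fix $\vh = \big((\vT)_T,(\vF)_F\big)\in\XXhz$ and, on each element $T$, invoke Lemma~\ref{lemma:ineq.decomp} to split $\vT = \Grad g_T + (\vec x - \vec x_T)\times\Curl\vec c_T$, so that $\|\vT - \Grad g_T\|_T \le 2h_T\|\Curl\vT\|_T$; the ``rot'' part is thus already controlled by $\|\Curl_h\vTh\|_\Omega \le \|\vh\|_{\boldsymbol{\rm X},h}$. The remaining task is to control $\sum_T\|\Grad g_T\|_T$, i.e.\ the ``gradient part'' of $\vTh$. For this I would construct a global scalar test function: let $\psi\in H^1_0(\Omega)$ solve a Laplace-type problem with right-hand side built from the $\Grad g_T$'s (or, more robustly, from $\Div_h$ of a lifting of $\vTh$), apply the interpolator $\IntYh$ to obtain $\qh\defi\IntYh\psi\in\Yhz$, and use the commutation property~\eqref{lem:commut.GTkITk.eq}, $\Gkh(\IntYh\psi) = \vec\pi^{k+1}_{\boldsymbol{\cal P},h}(\Grad\psi)$. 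Then the orthogonality defining $\XXhz$, namely $(\vTh,\Gkh\qh)_\Omega = 0$, gives $(\vTh, \vec\pi^{k+1}_{\boldsymbol{\cal P},h}\Grad\psi)_\Omega = 0$, i.e.\ $(\vTh,\Grad\psi)_\Omega = 0$ since $\vTh$ is already a degree-$(k{+}1)$ broken polynomial. Integrating by parts element-by-element, $0 = (\vTh,\Grad\psi)_\Omega = -\sum_T(\Div\vT,\psi)_T + \sum_T\sum_{F\in\f_T}(\vT{\cdot}\normal_{TF},\psi)_F$; one then adds and subtracts the face unknowns $\vF$ (whose contributions telescope across interfaces and vanish on $\f_h^{\rm b}$ because $\psi\in H^1_0$), leaving a sum of jump terms $\sum_T\sum_{F\in\f_T}\big((\vT - \vF){\cdot}\normal_{TF},\psi - (\text{something traceable})\big)_F$ plus the volumetric term. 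Each jump term is bounded, via Cauchy–Schwarz, discrete trace/inverse inequalities, and the mesh-regularity-based approximation estimates for $\IntYh$ and $\pi^q_{{\cal P}}$, by $h_F^{1/2}\|\vec\pi^{k+1}_{\boldsymbol{\cal G},F}(\vec\gamma_{\tau,F}(\vT)-\vF)\|_F \cdot \|\Grad\psi\|_{\omega_T}$ (up to combining with the already-controlled rot part), and summing yields $\sum_T\|\Grad g_T\|_T \lesssim \|\vh\|_{\boldsymbol{\rm X},h}\,\|\Grad\psi\|_\Omega$. Finally, choosing $\psi$ so that $\|\Grad\psi\|_\Omega \lesssim \|\vTh\|_\Omega$ (possible by the elliptic stability of the auxiliary problem and the topological assumptions on $\Omega$, via the continuous decomposition~\eqref{eq:helmholtz}) closes the loop: $\|\vTh\|_\Omega^2 \lesssim \|\Curl_h\vTh\|_\Omega\|\vTh\|_\Omega + \|\vh\|_{\boldsymbol{\rm X},h}\|\vTh\|_\Omega$, whence~\eqref{thm:max.ineq.eq}.

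**Main obstacle.** The delicate point is making rigorous the passage from ``$\vTh$ restricted to $\Grad\psi$ vanishes'' to an $L^2$ bound on the gradient part of $\vTh$: the difficulty is that $\vTh$ is only a \emph{broken} polynomial with nonconforming tangential traces (controlled only through $\vec\pi^{k+1}_{\boldsymbol{\cal G},F}$ of the jumps, not the full jumps), so one cannot directly test against an $H^1$ function and integrate by parts without producing boundary error terms. One must therefore carefully pair the $\boldsymbol{{\cal G}}$-projected jump terms in~\eqref{def:norms.h:curl} with the \emph{gradient} nature of the test quantity $\Grad\psi$ — using that only the tangential-gradient component of the face data matters when testing against a gradient — and this is precisely where the choice of face space $\boldsymbol{{\cal G}}^{k+1}(F)$ in~\eqref{def:DOFs.Xh} is used. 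A secondary technical burden is the construction of the auxiliary $\psi$ with the right regularity: on a general polyhedral $\Omega$ (only Lipschitz) one does not have $H^2$ regularity, so I would route the argument through the $\boldsymbol L^2$-orthogonal Helmholtz decomposition~\eqref{eq:helmholtz} of a conforming lifting of $\vTh$ rather than through elliptic regularity, keeping all constants dependent only on $\Omega$ and the mesh-regularity parameter.
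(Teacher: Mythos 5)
There is a genuine gap, concentrated in two places. First, the step ``integrate $(\vTh,\Grad\psi)_\Omega$ by parts element-by-element and add/subtract the face unknowns'' cannot be carried out: integration by parts against a gradient produces the \emph{normal} traces $(\vT{\cdot}\normal_{TF},\psi)_F$, whereas the face unknowns $\vF\in\boldsymbol{{\cal G}}^{k+1}(F)$ are purely tangential and the seminorm \eqref{def:norms.h:curl} only controls the $\boldsymbol{{\cal G}}$-projected \emph{tangential} jumps. Nothing in $\Xhz$ or in $\|\cdot\|_{\boldsymbol{{\rm X}},h}$ controls the normal jumps of $\vTh$ across interfaces, so the boundary terms your identity generates cannot be absorbed, and in particular they cannot be bounded by $h_F^{1/2}\|\vec{\pi}^{k+1}_{\boldsymbol{{\cal G}},F}(\vec{\gamma}_{\tau,F}(\vT)-\vF)\|_F$. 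In the paper's proof no integration by parts is needed for the gradient contribution at all: writing the global Helmholtz decomposition $\vTh=\Grad\varphi+\Curl\boldsymbol{\psi}$ as in \eqref{eq:vTh:orth.decomp}, one gets $(\vTh,\Grad\varphi)_\Omega=(\vTh,\Gkh\IntYh\varphi)_\Omega=0$ \emph{exactly}, by the commutation property \eqref{lem:commut.GTkITk.eq} and the definition \eqref{def:XXhz} of $\XXhz$ --- no jump terms appear.

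Second, and more structurally, the quantity you set out to bound, $\sum_T\|\Grad g_T\|_T^2$ with $g_T$ the local potentials from the element-wise decomposition \eqref{eq:decomp}, is not the right target: the $g_T$ do not glue into an $H^1_0(\Omega)$ function, so the constraint $(\vTh,\Grad\psi)_\Omega=0$ for globally conforming $\psi$ says nothing about them (a globally smooth solenoidal field has large local gradient parts on every element while satisfying the constraint). The bulk of $\|\vTh\|_\Omega$ lives in the curl part $\Curl\boldsymbol{\psi}$ of the \emph{global} decomposition, and your proposal contains no estimate of $(\vTh,\Curl\boldsymbol{\psi})_\Omega$. That is where the real work is: element-by-element integration by parts against $\Curl\boldsymbol{\psi}$ produces the tangential jumps $\vec{\gamma}_{\tau,F}(\vT)-\vF$ paired with the (single-valued) tangential trace of $\boldsymbol{\psi}$, and Lemma~\ref{lemma:ineq.decomp} is then invoked --- not to split off a small remainder of $\vTh$ --- but to show (cf.~\eqref{eq:crucial}--\eqref{eq:max.proof:I2b}) that the \emph{full} tangential jump is controlled by its $\vec{\pi}^{k+1}_{\boldsymbol{{\cal G}},F}$-projection (the only part present in the seminorm) plus $h_T\|\Curl\vT\|_T$, precisely because $\vec{\gamma}_{\tau,F}(\Grad g_T)$ already lies in $\boldsymbol{{\cal G}}^{k+1}(F)$. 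So while your overall strategy (Helmholtz decomposition, killing the gradient part with the discrete constraint, using Lemma~\ref{lemma:ineq.decomp}) points in the right direction, the two central estimates of the actual proof are absent and the one identity you do write down would fail.
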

\begin{proof}
  Let $\vh\in \XXhz$.
    Recall the following standard $\boldsymbol{L}^2(\Omega;\mathbb{R}^3)$-orthogonal Helmholtz decomposition (cf., e.g.,~\cite[Proposition 3.7.1]{Assous.ea:18}):
    $$\boldsymbol{L}^2(\Omega;\mathbb{R}^3)={\bf grad}\big(H^1_0(\Omega)\big)\overset{\perp}{\oplus}\Hdivz.$$
    By the characterization of divergence-free functions from~\cite[Theorem 3.4.1]{Assous.ea:18}, and since $\vTh\in \PkpdTh\subset \boldsymbol L^2(\Omega;\mathbb{R}^3)$ and $\partial\Omega$ is connected, we can write
        \begin{equation}\label{eq:vTh:orth.decomp}
          \vTh = \Grad \varphi + \Curl \boldsymbol{\psi},
        \end{equation}
for some $\varphi\in H^1_0(\Omega)$, and some $\boldsymbol{\psi}\in \boldsymbol H^1(\Omega;\mathbb{R}^3)$ such that $\int_\Omega\boldsymbol{\psi}=\boldsymbol{0}$ and $\Div\boldsymbol{\psi}=0$. Furthermore,
        \begin{equation} \label{eq:max.proof:bound.z}
          \|\boldsymbol{\psi}\|_{\boldsymbol H^1(\Omega;\mathbb{R}^3)}\lesssim\|\Curl\boldsymbol{\psi}\|_\Omega.
        \end{equation}
	Using the $\boldsymbol{L}^2(\Omega;\Real^3)$-orthogonal decomposition~\eqref{eq:vTh:orth.decomp} of $\vTh$, we have that
	\begin{equation}\label{eq:max.proof:norm.decomp}
		\|\vTh\|_\Omega^2 =
      (\vTh,\Grad \varphi)_\Omega
      + \big(\vTh,\Curl\boldsymbol{\psi}\big)_\Omega
    \ifed \mathcal{I}_1 + \mathcal{I}_2.
	\end{equation}
	
	For the first term in \eqref{eq:max.proof:norm.decomp}, setting $\underline{\upvarphi}_h \defi \IntYh \varphi\in\Yhz$, we infer that
	\begin{equation}\label{eq:max.proof:I1}
		\begin{aligned}
			\mathcal{I}_1
      = \big(\vTh,\boldsymbol{\pi}_{\boldsymbol{\cal P},h}^{k+1}(\Grad \varphi)\big)_\Omega = \big(\vTh,\Gkh \underline{\upvarphi}_h\big)_\Omega =  0,
			%%\ch(\rh,\qh) - \mh(\qh),
		\end{aligned}
	\end{equation}
	where we have used that $\vTh\in \PkpdTh$ to insert $\boldsymbol{\pi}_{\boldsymbol{\cal P},h}^{k+1}$, followed by the commutation property~\eqref{lem:commut.GTkITk.eq} of $\Gkh$ and the fact that $\vh\in\XXhz$.
	
	Let us now estimate the second term in \eqref{eq:max.proof:norm.decomp}.
	We have
	\begin{equation*}
		\begin{aligned}
			\mathcal{I}_2 &=
      \sum_{T\in\t_h}\big(\vT,\Curl\boldsymbol{\psi}\big)_T
			= \sum_{T\in\t_h} \left(
				\big(\Curl \vT,\boldsymbol{\psi}\big)_T - \sum_{F\in\f_T}\big(\boldsymbol{\psi}_{\mid F}{\times}\normal_{TF},{\vT}_{\mid F}\big)_F
			\right)
			\\
			& = \sum_{T\in\t_h} \left(
				\big(\Curl \vT,\boldsymbol{\psi}\big)_T - \sum_{F\in\f_T}\big(\vec{\gamma}_{\tau,F}(\boldsymbol{\psi}{\times}\normal_{TF}),\vec{\gamma}_{\tau,F}(\vT)-\vF\big)_F
			\right),
		\end{aligned}
	\end{equation*}
        where we have used an integration by parts formula on each mesh element $T\in\t_h$ in the first line,
        and the fact that the jumps of $\boldsymbol{\psi}\in \boldsymbol H^1(\Omega;\mathbb{R}^3)$ vanish on interfaces along with $\vF=\boldsymbol{0}$ for all $F\in\f_h^{\rm b}$ to insert $\vF$ into the second term in the second line.
	Applying Cauchy--Schwarz inequalities to the right-hand side, we obtain
	\begin{equation}\label{eq:max.proof:I2a}
          \begin{split}
	    \mathcal{I}_2
	    \leq \Bigg(
	    \sum_{T\in\t_h}\Big(\|\Curl &\vT\|_T^2+\sum_{F\in\f_T} h_F^{-1}\|\vec{\gamma}_{\tau,F}(\vT)-\vF\|_F^2\Big)
	    \Bigg)^{\nicefrac{1}{2}}
	    \\
	    &\times\Bigg(
	    \sum_{T\in\t_h}\Big(\|\boldsymbol{\psi}\|_T^2+\sum_{F\in\f_T} h_F\|\boldsymbol{\psi}_{\mid F}{\times}\normal_{TF}\|_F^2\Big)
	    \Bigg)^{\nicefrac{1}{2}}.
          \end{split}
	\end{equation}
	We focus on the first factor on the right-hand side of \eqref{eq:max.proof:I2a}.
	For $T\in\t_h$ and $F\in\f_T$, decomposing $\vT\in\PkpdT$ along~\eqref{eq:decomp} as $\vT = \Grad g + {(\boldsymbol{x}-\boldsymbol{x}_T)}\times\Curl\vec{c}$ with $g\in\PkppT$ and $\vec{c}\in\PkpdT$, inserting into the norm $\pm\big[\vec{\gamma}_{\tau,F}(\Grad g)+\vec{\pi}_{\boldsymbol{{\cal G}},F}^{k+1}\big(\vec{\gamma}_{\tau,F}(\vT)\big)\big]$, and using the triangle inequality, we infer, since $\vF\in\boldsymbol{{\cal G}}^{k+1}(F)$ and $\vec{\gamma}_{\tau,F}(\Grad g) = \Grad_\tau (g_{\mid F})\in\boldsymbol{{\cal G}}^{k+1}(F)$,
	\begin{equation}\label{eq:crucial}
          \begin{split}
            \sum_{T\in\t_h}\!\sum_{F\in\f_T}\! h_F^{-1}\|\vec{\gamma}_{\tau,F}(\vT) - \vF\|_F^2
            \lesssim\sum_{T\in\t_h}\!\sum_{F\in\f_T}\! h_F^{-1}& \|\vec{\pi}_{\boldsymbol{{\cal G}},F}^{k+1}\big(\vec{\gamma}_{\tau,F}(\vT)-\vF\big) \|_F^2
            \\
	    +\sum_{T\in\t_h}\!\sum_{F\in\f_T}&\! h_F^{-1}\|\vec{\gamma}_{\tau,F}\big(\vT- \Grad g\big) \|_F^2
            \\
            +\sum_{T\in\t_h}\!\sum_{F\in\f_T}\! h_F^{-1}\|&\vec{\pi}_{\boldsymbol{{\cal G}},F}^{k+1}\big(\vec{\gamma}_{\tau,F}(\vT- \Grad g)\big) \|_F^2.
          \end{split}
        \end{equation}
        Using the $\boldsymbol L^2(F;\mathbb{R}^2)$-boundedness of $\vec{\pi}_{\boldsymbol{{\cal G}},F}^{k+1}$, a discrete trace inequality (cf., e.g.,~\cite[Lemma 1.32]{Di-Pietro.Droniou:20}), and Lemma \ref{lemma:ineq.decomp} with $\vec{p}=\vT$, we infer
	\begin{equation}\label{eq:max.proof:I2b}
          \begin{split}
	    \sum_{T\in\t_h}\sum_{F\in\f_T}&h_F^{-1}\|\vec{\gamma}_{\tau,F}(\vT)-\vF\|_F^2
	    \\
	    &\lesssim \sum_{T\in\t_h}\Big(\|\Curl \vT\|_T^2+\sum_{F\in\f_T} h_F^{-1}\|\vec{\pi}_{\boldsymbol{{\cal G}},F}^{k+1}\big(\vec{\gamma}_{\tau,F}(\vT)-\vF\big)\|_F^2\Big).
          \end{split}
	\end{equation}
	Now, for the second factor on the right-hand side of \eqref{eq:max.proof:I2a}, using that $|\vec{\psi}_{\mid F}{\times}\normal_{TF}|\leq|\vec{\psi}_{\mid F}|$, a continuous trace inequality (cf., e.g.,~\cite[Lemma 1.31]{Di-Pietro.Droniou:20}), the fact that $h_T\leq {\rm diam}(\Omega)$ for all $T\in\t_h$, and concluding with~\eqref{eq:max.proof:bound.z}, one has
	\begin{equation}\label{eq:max.proof:I2c}
		\left(
		\sum_{T\in\t_h}\Big(\|\boldsymbol{\psi}\|_T^2
                +\!\!\sum_{F\in\f_T} h_F\|\boldsymbol{\psi}_{\mid F}{\times}\normal_{TF}\|_F^2\Big)
		\right)^{\nicefrac{1}{2}}
		\!\!\lesssim\|\boldsymbol{\psi}\|_{\boldsymbol H^1(\Omega;\mathbb{R}^3)}\lesssim\|\Curl\boldsymbol{\psi}\|_\Omega.
	\end{equation}
	Plugging \eqref{eq:max.proof:I2b} and \eqref{eq:max.proof:I2c} into \eqref{eq:max.proof:I2a}, and recalling the definition \eqref{def:norms.h:curl} of the $\|\cdot\|_{\boldsymbol{{\rm X}},h}$-seminorm yields
          \begin{equation*}\label{eq:ineq.decomp:I2}
          \mathcal{I}_2
          \lesssim\|\vh\|_{\boldsymbol{{\rm X}},h}\|\Curl\vec{\psi}\|_\Omega
          \le\|\vh\|_{\boldsymbol{{\rm X}},h}\|\vTh\|_\Omega,
          \end{equation*}
          where we have used the $\boldsymbol L^2(\Omega;\mathbb{R}^3)$-orthogonality of the decomposition~\eqref{eq:vTh:orth.decomp} in the last bound.
	We conclude by combining \eqref{eq:max.proof:norm.decomp}, \eqref{eq:max.proof:I1}, and this last estimate.
\end{proof}

\begin{remark}[Control of element unknowns]\label{rem:control}
  A direct proof of the fact that, for all $\vh\in\XXhz$, $\|\vh\|_{\boldsymbol{{\rm X}},h}=0$ implies $\|\vTh\|_{\Omega}=0$ can be obtained as follows.
  The volumetric term in~\eqref{def:norms.h:curl} first yields that, for all $T\in\t_h$, $\Curl(\vec{{\rm v}}_{h\mid T})=\vec{0}$ in $T$, meaning that $\vec{{\rm v}}_{h\mid T}=\Grad g$ for some $g\in\PkppT$ by Lemma~\ref{lemma:ineq.decomp}.
  The boundary term in~\eqref{def:norms.h:curl} then yields the continuity of the tangential component of $\vTh$ at interfaces, as well as $\vec{n}{\times}(\vTh{\times}\vec{n})=\vec{0}$ on $\partial\Omega$. Hence, $\vTh\in\Hzcurl$ and $\Curl\vTh=\vec{0}$ in $\Omega$. Since $\vh\in\XXhz$, we also have $\vTh\in\Hdivz$ by the commutation property~\eqref{lem:commut.GTkITk.eq}. Finally, by the continuous first Weber inequality~\eqref{eq:weber}, $\|\vTh\|_{\Omega}=0$.
  Notice that this result is weaker than the quantitative estimate of Theorem~\ref{thm:max.ineq}, as it does not give any information on how the constant $c_{\rm W}$ depends on the mesh at hand.
  %% Actually, since this qualitative result is insensitive to the mesh regularity, it could be stated under the sole assumption that $\mathcal{M}_h$ is a polyhedral mesh in the sense of~\cite[Definition~1.4]{Di-Pietro.Droniou:20}.
  Notice also that imposing $\vTh\in\Hdivz$ as we do is actually not necessary. In view of the above analysis, it is sufficient and necessary to impose that $\vTh$ be orthogonal to the gradient of any function in $\Poly^{k+2}(\t_h)\cap C^0_0(\Omega)$. This is the approach pursued in~\ifMMMAS Refs.~\citen{Chen.ea:18} and~\citen{Chen.Monk.ea:20} \else\cite{Chen.ea:18} and~\cite{Chen.Monk.ea:20} \fi on tetrahedral meshes.
\end{remark}

\begin{corollary}[Norm $\|\cdot\|_{\boldsymbol{{\rm X}},h}$] \label{co:norm.X}
	The map $\|\cdot\|_{\boldsymbol{{\rm X}},h}$ defines a norm on $\XXhz$ defined by~\eqref{def:XXhz}.
\end{corollary}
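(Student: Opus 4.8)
The plan is to observe that, since $\|\cdot\|_{\boldsymbol{{\rm X}},h}$ has already been introduced as a \emph{seminorm} on $\Xh$ (in~\eqref{def:norms.h:curl}), both the absolute homogeneity and the triangle inequality are automatic, and hence restrict to the subspace $\XXhz\subseteq\Xhz\subseteq\Xh$. The only property left to establish is definiteness: if $\vh\in\XXhz$ satisfies $\|\vh\|_{\boldsymbol{{\rm X}},h}=0$, then $\vh=\underline{0}_h$, i.e.\ $\vT=\boldsymbol{0}$ for all $T\in\t_h$ and $\vF=\boldsymbol{0}$ for all $F\in\f_h$.

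First I would apply Theorem~\ref{thm:max.ineq} directly: the discrete Weber inequality~\eqref{thm:max.ineq.eq} gives $\|\vTh\|_\Omega\leq c_{\rm W}\|\vh\|_{\boldsymbol{{\rm X}},h}=0$, so $\vTh=\boldsymbol{0}$ in $\Omega$, which by definition of $\vTh$ means $\vT=\boldsymbol{0}$ for every $T\in\t_h$. This is the step that does all the real work, and it is already available to us as a quoted result.

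Next I would recover the face unknowns from the boundary contribution to the seminorm. Since $\|\vh\|_{\boldsymbol{{\rm X}},h}=0$, each summand in~\eqref{def:norms.h:curl} vanishes, in particular $\sum_{T\in\t_h}\sum_{F\in\f_T} h_F^{-1}\|\vec{\pi}^{k+1}_{\boldsymbol{{\cal G}},F}(\vec{\gamma}_{\tau,F}(\vT)-\vF)\|_F^2=0$. Using the already established fact that $\vT=\boldsymbol{0}$, this forces $\vec{\pi}^{k+1}_{\boldsymbol{{\cal G}},F}(\vF)=\boldsymbol{0}$ for all $F\in\f_T$ and all $T\in\t_h$, i.e.\ for all $F\in\f_h$ (every face belongs to $\f_T$ for at least one element $T$). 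But by construction $\vF\in\boldsymbol{{\cal G}}^{k+1}(F)$, so the orthogonal projector $\vec{\pi}^{k+1}_{\boldsymbol{{\cal G}},F}$ acts as the identity on $\vF$, whence $\vF=\vec{\pi}^{k+1}_{\boldsymbol{{\cal G}},F}(\vF)=\boldsymbol{0}$. Combining the two vanishing statements yields $\vh=\underline{0}_h$, which completes the proof.

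I do not expect any genuine obstacle here: the statement is an immediate corollary of Theorem~\ref{thm:max.ineq}, and the only point worth flagging is that the argument for the face unknowns hinges crucially on the design choice $\vF\in\boldsymbol{{\cal G}}^{k+1}(F)$ (rather than $\vF\in\boldsymbol\Poly^{k+1}(F)$), since otherwise $\vec{\pi}^{k+1}_{\boldsymbol{{\cal G}},F}$ would not be injective on the face space and the boundary term alone would not control the full face unknown.
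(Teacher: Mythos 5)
Your proof is correct and follows exactly the paper's own argument: invoke Theorem~\ref{thm:max.ineq} to kill the element unknowns, then use the vanishing of the face terms in~\eqref{def:norms.h:curl} together with the fact that $\vec{\pi}^{k+1}_{\boldsymbol{{\cal G}},F}$ acts as the identity on $\vF\in\boldsymbol{{\cal G}}^{k+1}(F)$ to conclude $\vF=\boldsymbol{0}$. Your closing remark on why the design choice $\vF\in\boldsymbol{{\cal G}}^{k+1}(F)$ is essential is a correct and worthwhile observation, consistent with the discussion following~\eqref{def:DOFs.Yh}.
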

\begin{proof}
  This is a direct consequence of Theorem~\ref{thm:max.ineq} and of the definition~\eqref{def:norms.h:curl}. For all $\vh\in\XXhz$, if $\|\vh\|_{\boldsymbol{{\rm X}},h}=0$, then $\vTh=\vec{0}$, i.e.~$\vT=\vec{0}$ for all $T\in\t_h$. Then, for all $F\in\f_h$, $\|\vec{\pi}_{\boldsymbol{{\cal G}},F}^{k+1}(\vF)\|_{F}=\|\vF\|_{F}=0$, i.e.~$\vF=\vec{0}$, whence $\vh=\underline{\vec{0}}_h$.
\end{proof}

\begin{corollary}[Generalized discrete Weber inequality]\label{cor:max.ineq}
  Let $\dh:\Yh\times\Yh\to\Real$ be a symmetric positive semi-definite bilinear form such that, for all $\varphi\in H^1_0(\Omega)$, letting $\underline{\upvarphi}_h\defi\IntYh\varphi\in\Yhz$,
  \begin{equation} \label{eq:approx}
    \dh\big(\underline{\upvarphi}_h,\underline{\upvarphi}_h\big)^{\nicefrac12}\lesssim\|\Grad\varphi\|_{\Omega}.
  \end{equation}
  Then, there is $c_{\rm W}>0$ independent of $h$ such that, for all $(\vh,\rh)\in\Xhz{\times}\Yhz$ satisfying
	\begin{equation}\label{rmk:max.ineq.cond}
		-\big(\vTh,\Gkh\qh\big)_\Omega + \dh\big(\rh,\qh\big) =  0\qquad\forall\qh\in\Yhz,
	\end{equation}
	one has
	\begin{equation}\label{rmk:max.ineq.eq}
		\| \vTh\|_\Omega \leq c_{\rm W}\Big( \|\vh\|^2_{\boldsymbol{{\rm X}},h} + \dh(\rh,\rh)\Big)^{\nicefrac{1}{2}}.
	\end{equation}
\end{corollary}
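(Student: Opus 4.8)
The plan is to retrace the proof of Theorem~\ref{thm:max.ineq} almost verbatim, the only change being the treatment of the gradient component of the Helmholtz decomposition of $\vTh$, which no longer vanishes but is instead controlled through the bilinear form $\dh$. First I would invoke, exactly as in \eqref{eq:vTh:orth.decomp}--\eqref{eq:max.proof:bound.z}, the $\boldsymbol L^2(\Omega;\mathbb{R}^3)$-orthogonal Helmholtz decomposition of $\vTh\in\PkpdTh\subset\boldsymbol L^2(\Omega;\mathbb{R}^3)$, writing $\vTh = \Grad\varphi + \Curl\boldsymbol{\psi}$ with $\varphi\in H^1_0(\Omega)$ and $\boldsymbol{\psi}\in\boldsymbol H^1(\Omega;\mathbb{R}^3)$ such that $\Div\boldsymbol{\psi}=0$, $\int_\Omega\boldsymbol{\psi}=\boldsymbol{0}$, and $\|\boldsymbol{\psi}\|_{\boldsymbol H^1(\Omega;\mathbb{R}^3)}\lesssim\|\Curl\boldsymbol{\psi}\|_\Omega$. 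Testing the squared norm against this decomposition gives $\|\vTh\|_\Omega^2 = (\vTh,\Grad\varphi)_\Omega + (\vTh,\Curl\boldsymbol{\psi})_\Omega \ifed \mathcal{I}_1+\mathcal{I}_2$.

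The term $\mathcal{I}_2$ is estimated with no modification whatsoever with respect to the proof of Theorem~\ref{thm:max.ineq}: element-wise integration by parts, insertion of the face unknowns $\vF$ (legitimate since the jumps of $\boldsymbol{\psi}\in\boldsymbol H^1(\Omega;\mathbb{R}^3)$ vanish on interfaces and $\vF=\boldsymbol{0}$ on $\f_h^{\rm b}$ because $\vh\in\Xhz$), Cauchy--Schwarz inequalities, the polynomial decomposition of Lemma~\ref{lemma:ineq.decomp} together with the $\boldsymbol L^2(F;\mathbb{R}^2)$-boundedness of $\vec{\pi}_{\boldsymbol{{\cal G}},F}^{k+1}$ and discrete/continuous trace inequalities, and finally \eqref{eq:max.proof:bound.z}. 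This reproduces $\mathcal{I}_2\lesssim\|\vh\|_{\boldsymbol{{\rm X}},h}\|\Curl\boldsymbol{\psi}\|_\Omega\le\|\vh\|_{\boldsymbol{{\rm X}},h}\|\vTh\|_\Omega$, the last step using the orthogonality of the Helmholtz decomposition.

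For $\mathcal{I}_1$, I would set $\underline{\upvarphi}_h\defi\IntYh\varphi$, which belongs to $\Yhz$ since $\varphi\in H^1_0(\Omega)$, hence is an admissible test function in \eqref{rmk:max.ineq.cond}. Inserting $\boldsymbol{\pi}_{\boldsymbol{\cal P},h}^{k+1}$ (allowed because $\vTh\in\PkpdTh$) and using the commutation property \eqref{lem:commut.GTkITk.eq}, $\mathcal{I}_1 = \big(\vTh,\boldsymbol{\pi}_{\boldsymbol{\cal P},h}^{k+1}(\Grad\varphi)\big)_\Omega = \big(\vTh,\Gkh\underline{\upvarphi}_h\big)_\Omega$; then \eqref{rmk:max.ineq.cond} with $\qh=\underline{\upvarphi}_h$ gives $\mathcal{I}_1 = \dh\big(\rh,\underline{\upvarphi}_h\big)$. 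Applying the Cauchy--Schwarz inequality for the symmetric positive semi-definite bilinear form $\dh$, then the approximation property \eqref{eq:approx}, and finally $\|\Grad\varphi\|_\Omega\le\|\vTh\|_\Omega$ (orthogonality again), I obtain $\mathcal{I}_1 \le \dh(\rh,\rh)^{\nicefrac12}\,\dh(\underline{\upvarphi}_h,\underline{\upvarphi}_h)^{\nicefrac12} \lesssim \dh(\rh,\rh)^{\nicefrac12}\|\Grad\varphi\|_\Omega \le \dh(\rh,\rh)^{\nicefrac12}\|\vTh\|_\Omega$.

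Combining the two bounds yields $\|\vTh\|_\Omega^2\lesssim\big(\|\vh\|_{\boldsymbol{{\rm X}},h}+\dh(\rh,\rh)^{\nicefrac12}\big)\|\vTh\|_\Omega$; simplifying by $\|\vTh\|_\Omega$ (the case $\|\vTh\|_\Omega=0$ being trivial) and using $a+b\le\sqrt{2}\,(a^2+b^2)^{\nicefrac12}$ gives \eqref{rmk:max.ineq.eq}, with a constant $c_{\rm W}$ independent of $h$ since every $\lesssim$ above hides only mesh-regularity- and degree-dependent constants. I do not anticipate any real obstacle: all the machinery is already in place, and the sole genuinely new point compared with Theorem~\ref{thm:max.ineq} is that $\mathcal{I}_1$ is now nonzero and is handled through Cauchy--Schwarz on $\dh$ plus the consistency bound \eqref{eq:approx}; the only mild care needed is bookkeeping the $h$-independence of the constants.
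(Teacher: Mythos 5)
Your proposal is correct and follows essentially the same route as the paper's proof: the Helmholtz decomposition and the bound on $\mathcal{I}_2$ are reused verbatim from Theorem~\ref{thm:max.ineq}, and $\mathcal{I}_1$ is handled exactly as in the paper, via the commutation property, the test function $\qh=\underline{\upvarphi}_h$ in \eqref{rmk:max.ineq.cond}, the Cauchy--Schwarz inequality for the positive semi-definite form $\dh$, the assumption \eqref{eq:approx}, and the orthogonality of the decomposition. No gaps.
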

\begin{proof}
  We follow the steps of the proof of Theorem~\ref{thm:max.ineq}.
  If $(\vh,\rh)\in\Xhz{\times}\Yhz$ satisfies \eqref{rmk:max.ineq.cond}, then \eqref{eq:max.proof:I1} becomes
  \[
  \mathcal{I}_1 = (\vTh,\Grad \varphi)_\Omega = \big(\vTh,\Gkh\underline{\upvarphi}_h\big)_\Omega =  \dh\big(\rh,\underline{\upvarphi}_h\big).
  \]
  By the Cauchy--Schwarz inequality and~\eqref{eq:approx}, we infer
  \begin{equation} \label{eq:last.ineq}
    \mathcal{I}_1
    \lesssim\dh(\rh,\rh)^{\nicefrac{1}{2}}\|\Grad\varphi\|_\Omega
    \leq\dh(\rh,\rh)^{\nicefrac{1}{2}}\|\vTh\|_\Omega,
  \end{equation}
  where we have used the $\boldsymbol L^2(\Omega;\mathbb{R}^3)$-orthogonality of the decomposition~\eqref{eq:vTh:orth.decomp} in the last bound.
  The rest of the proof is unchanged provided we substitute~\eqref{eq:last.ineq} to \eqref{eq:max.proof:I1}.
  \end{proof}
\noindent
This last corollary will be instrumental in the analysis of the HHO method of Section~\ref{sec:general}. Before closing this section, two additional remarks are in order.
\begin{remark}[Topological assumptions on the domain] \label{rem:top}
  The first Weber inequality~\eqref{eq:weber} is actually valid under the sole topological assumption that the boundary of $\Omega$ is connected, so that its second Betti number is zero. The same holds true for the discrete Weber inequalities of Theorem~\ref{thm:max.ineq} and Corollary~\ref{cor:max.ineq} (and, incidentally, this is also the case for discrete Weber inequalities on spaces with conforming unknowns, see~\cite[Theorem~19]{Di-Pietro.Droniou:20*1}). In other words, one does not need to assume, as we do, that $\Omega$ is simply-connected to prove these results. This last assumption is however necessary in the applicative Section~\ref{se:magneto} to have equivalence \begin{inparaenum}[(i)] \item between Problems~\eqref{eq:standard.strong} and~\eqref{eq:standard.weak} in field formulation, and \item between Problems~\eqref{eq:strong} and~\eqref{eq:strong.equiv} (in the class of potentials satisfying the Coulomb gauge) in vector potential formulation when $\Div\vec{f} = 0$\end{inparaenum}.
\end{remark}
\begin{remark}[Star-shapedness assumption]\label{rem:role.star-shaped}
  We have assumed in Section~\ref{se:disset} that the mesh elements are star-shaped. This assumption has been instrumental to prove Lemma~\ref{lemma:ineq.decomp} (and, in turn, Theorem~\ref{thm:max.ineq} and Corollary~\ref{cor:max.ineq}), where it is used to infer a sign for the rightmost term in the second line of~\eqref{proof.ineq.decomp.2}.
  According to~\cite[Lemma 46]{Di-Pietro.Droniou:21}, this assumption is in fact not necessary for Lemma~\ref{lemma:ineq.decomp} to hold true, and all the results of this article actually (seamlessly) extend to the general case of meshes featuring non-necessarily star-shaped mesh elements (in that case, $\vec{x}_T$ denotes any given interior point of $T$ such that $T$ contains a ball centered at $\vec{x}_T$ of radius comparable to $h_T$).
  Nonetheless, we have preferred to keep this assumption in our analysis because it enables, as opposed to~\cite[Lemma 46]{Di-Pietro.Droniou:21}, whose proof hinges on a transport argument, to obtain an explicit multiplicative constant in front of $h_T$ in~\eqref{lemma:ineq.decomp.eq}.
  %% However, when the face unknowns $\vF$ are rather taken in the full polynomial space $\boldsymbol\Poly^{k+1}(F)$ (and, correspondingly, $\vec{\pi}^{k+1}_{\boldsymbol{{\cal G}},F}$ is replaced by $\vec{\pi}^{k+1}_{\boldsymbol{\cal P},F}$), this assumption can be relaxed. Indeed, the discrete Weber inequality reads in this case:
  %% For all $\vh\in \XXhz$, one has (note that $\vec{\gamma}_{\tau,F}(\vT)\in\boldsymbol\Poly^{k+1}(F)$)
  %% $$\| \vTh\|_\Omega \leq c_{\rm W}\left(\|\Curl_h\!\vTh \|_\Omega^2+ \sum_{T\in\t_h}\sum_{F\in\f_T} h_F^{-1}\|\vec{\gamma}_{\tau,F}(\vT) - \vF\|_F^2\right)^{\nicefrac12},$$
  %% which can be proved without resorting to Lemma~\ref{lemma:ineq.decomp}, just using~\eqref{eq:max.proof:I2a}.
\end{remark}

%-----------------------------------------------------------------%

\section{Application to magnetostatics} \label{se:magneto}

In this section, we design and analyze HHO methods for the discretization of the magnetostatics equations.
Their analysis leverages the discrete Weber inequality of Theorem~\ref{thm:max.ineq} and its generalization pointed out in Corollary~\ref{cor:max.ineq}.
For the sake of simplicity, we focus on homogeneous boundary conditions: the extension of HHO methods to non-homogeneous boundary conditions is standard (see, e.g., \cite[Section 2.4]{Di-Pietro.Droniou:20}) and will be considered in the numerical example of Section~\ref{sec:standard:num}.
We recall that we work on regular (polyhedral) mesh sequences $({\cal M}_h)_{h>0}$ in the sense of~\cite[Definition 1.9]{Di-Pietro.Droniou:20}, which are characterized by the fact that the sequence of mesh regularity parameters is bounded from below by a strictly positive real number.

\subsection{Field formulation} \label{sse:field}

\subsubsection{The model}

The (first-order) field formulation of the magnetostatics problem consists in finding the magnetic field $\boldsymbol{u}:\Omega\rightarrow\Real^3$ such that
\begin{subequations}
  \label{eq:standard.strong}
  \begin{alignat}{2}
    \Curl \boldsymbol{u} &= \boldsymbol{f} &\qquad&\text{in $\Omega$},\label{eq:standard.strong:1}
    \\
    \Div \boldsymbol{u} &= 0 &\qquad&\text{in $\Omega$},\label{eq:standard.strong:2}
	\\
    \normal{\times}(\boldsymbol{u}{\times}\normal)  &=  \boldsymbol{0} &\qquad&\text{on $\partial \Omega$},\label{eq:standard.strong:bc.u} 
  \end{alignat}
\end{subequations}
where the current density $\boldsymbol{f}:\Omega\to\Real^3$ is such that $\Div \boldsymbol{f} = 0$ in $\Omega$ and $\boldsymbol{f}{\cdot}\normal = 0$ on $\partial\Omega$. 
We consider the following equivalent (cf.~Remark~\ref{rem:top}) weak formulation of Problem \eqref{eq:standard.strong}, originally introduced in~\cite[Eq.~(52)]{Kikuchi:89} (see also~\ifMMMAS Ref.~\citen{Kanayama.ea:90}\else\cite{Kanayama.ea:90}\fi):
Find $(\boldsymbol{u},p)\in\Hzcurl\times H^1_0(\Omega)$ such that
\begin{subequations}\label{eq:standard.weak}
	\begin{alignat}{2}\label{eq:standard.weak.1}
		a(\boldsymbol{u},\boldsymbol{v}) + b(\boldsymbol{v},p) & = (\boldsymbol{f},\Curl \boldsymbol{v})_\Omega &\qquad&\forall \boldsymbol{v}\in\Hzcurl,
		\\\label{eq:standard.weak.2}
		b(\boldsymbol{u},q) & = 0 &\qquad&\forall q\in H^1_0(\Omega),
	\end{alignat}
\end{subequations}
where the bilinear forms $a:\Hcurl\times\Hcurl\rightarrow\Real$ and $b:\Hcurl\times H^1(\Omega) \rightarrow\Real$ are given by
\begin{equation}\label{def:standard.bf.c0}
	a(\boldsymbol{w},\boldsymbol{v}) \defi (\Curl \boldsymbol{w},\Curl \boldsymbol{v})_\Omega,
	\qquad
	b(\boldsymbol{w},q) \defi (\boldsymbol{w},\Grad q)_\Omega.
\end{equation}
The function $p:\Omega\rightarrow\Real$ is the Lagrange multiplier of the divergence-free constraint on the magnetic induction.
Testing~\eqref{eq:standard.weak.1} with $\vec{v}=\Grad p\in\Hzcurl$, it is readily inferred that $p=0$ in $\Omega$.
By the decomposition~\eqref{eq:helmholtz}, Problem~\eqref{eq:standard.weak} can then be equivalently rewritten: Find $\vec{u}\in\Hzcurl\cap\Hdivz$ such that
$$a(\vec{u},\vec{\eta})=(\boldsymbol{f},\Curl \boldsymbol{\eta})_\Omega\qquad\forall\boldsymbol{\eta}\in\Hzcurl\cap\Hdivz,$$
whose well-posedness is a direct consequence of the first Weber inequality~\eqref{eq:weber} and of the Lax--Milgram lemma.

\subsubsection{The HHO method}

We analyze in this section the HHO method for Problem~\eqref{eq:standard.weak} we have briefly introduced in~\ifMMMAS Ref.~\citen{Chave.ea:20}\else\cite{Chave.ea:20}\fi.
This HHO method is based on the hybrid spaces introduced in Section~\ref{sse:hybrid} (the space $\Xh$ for the magnetic field, and $\Yh$ for the Lagrange multiplier).
We define the discrete bilinear forms $\ah:\Xh\times\Xh\rightarrow\Real$, $\bh:\Xh\times\Yh\rightarrow\Real$, and $\ch:\Yh\times\Yh\rightarrow\Real$ such that
\begin{subequations}\label{def:standard.bf.h}
	\begin{alignat}{2}\label{def:standard.bf.ah}
		\ah(\wh,\vh) & \defi \big(\Curl_h\!\wTh,\Curl_h\!\vTh\big)_\Omega + \sh(\wh,\vh),
		\\\label{def:standard.bf.bh}
		\bh(\wh,\qh) & \defi \big(\wTh,\Gkh\qh\big)_\Omega,
		\\\label{def:standard.bf.ch}
		\ch(\rh,\qh) & \defi (\rTh,\qTh)_\Omega + \sum_{T\in\t_h}\sum_{F\in\f_T} h_F (\rF,\qF)_F,
	\end{alignat}
\end{subequations}
where $\Gkh:\Yh\rightarrow\PkpdTh$ is the gradient reconstruction operator introduced in Section \ref{sec:grad.recons}, and $\sh:\Xh\times\Xh\rightarrow\Real$ is the stabilization bilinear form such that
\begin{equation}\label{def:standard.bf.sh}
  \sh(\wh,\vh)\defi \sum_{T\in\t_h}\sum_{F\in\f_T} h_F^{-1}\big(\vec{\pi}_{\boldsymbol{{\cal G}},F}^{k+1}\big(\vec{\gamma}_{\tau,F}(\wT)-\wF\big),\vec{\pi}_{\boldsymbol{{\cal G}},F}^{k+1}\big(\vec{\gamma}_{\tau,F}(\vT)-\vF\big)\big)_F.
\end{equation}
The HHO method for Problem~\eqref{eq:standard.weak} then reads:
Find $(\uh,\ph)\in\Xhz\times\Yhz$ such that
\begin{subequations}\label{eq:standard.discrete}
	\begin{alignat}{3}\label{eq:standard.discrete.1}
		\ah(\uh,\vh) + \bh(\vh,\ph) & = (\boldsymbol{f},\Curl_h\!\vTh)_\Omega&\qquad&\forall \vh\in\Xhz,
		\\\label{eq:standard.discrete.2}
		-\bh(\uh,\qh) +\ch(\ph,\qh)& = 0&\qquad&\forall\qh\in\Yhz.
	\end{alignat}
\end{subequations}
Notice that, contrary to $p$, the discrete Lagrange multiplier $\ph$ is in general nonzero, as a consequence of the fact that the global discrete gradient $\Gkh\ph$ is not irrotational. Some remarks are in order.
\begin{remark}[The tetrahedral case]\label{rk:tet.case.1}
  On matching tetrahedral meshes, according to Lemma~\ref{le:GTk.norm}, $\|\Gkh\cdot\|_{\Omega}$ defines a norm on $\Yhz$. Hence, in this case, one can consider a modified version of Problem~\eqref{eq:standard.discrete} in which $\ch$ is removed and for which stability (hence well-posedness) is preserved. This will be justified rigorously by Lemma~\ref{lem:infsupbh} below (which essentially states an inf-sup condition for $\bh$). Removing $\ch$, $\uh\in\XXhz$ holds true and, as a by-product of the commutation property~\eqref{lem:commut.GTkITk.eq}, $\uTh\in\Hdivz$.
\end{remark}
\begin{remark}[Improved stability] \label{rem:stab}
  On general mesh families, and as opposed to the tetrahedral case, one has to add in~\eqref{eq:standard.discrete.2} a (consistent) positive semi-definite stabilization of the Lagrange multiplier to ensure the stability of the method. An example of such a stabilization is given by $\dh$ defined by~\eqref{def:general.bf.ch}.
  Here, taking advantage of the fact that the continuous Lagrange multiplier $p$ is identically zero (as a consequence of~\eqref{eq:standard.weak.1}), we choose to add the (consistent) positive-definite contribution $\ch(\ph,\qh)$ (remark that $\ch$ defines a norm on $\Yh$). This strategy enables to improve the stability of the method without deteriorating its convergence properties. At the opposite, in the model of Section~\ref{sec:general} below, the Lagrange multiplier may be nonzero and one cannot add the same contribution at the discrete level because it is not consistent anymore. One uses instead the semi-definite (consistent) bilinear form $\dh$ of~\eqref{def:general.bf.ch} and can then only prove a weaker stability result (compare Lemma~\ref{le:wp} and Lemma~\ref{thm:wellposedness}). To assess the effect of using $\ch$ as a stabilization, we compare in Section~\ref{sec:standard:num} (see Figure~\ref{fig:standard.error.tetra.nostab}) the numerical results obtained on a matching tetrahedral mesh family for Problem~\eqref{eq:standard.discrete} with $\ch$ and without $\ch$ (which is possible according to Remark~\ref{rk:tet.case.1}). Always better results (in absolute value) are obtained when using $\ch$.
\end{remark}
\begin{remark}[Curl reconstruction] \label{rem:curl}
  As opposed to what is done in HHO
% or HDG
  methods for second-order problems (see Section~\ref{sec:general} below), we here take advantage of the fact that the problem is first-order to avoid (locally) reconstructing a discrete $\Curl$ operator. Doing so, \begin{inparaenum}[(i)] \item it is possible to consider a smaller local space of face unknowns (that does not need to contain $\boldsymbol\Poly^k(F)$) for $k\geq 1$ (cf.~\cite[Table~1]{Chave.ea:20}), and \item there is no need to solve a local problem on each mesh element (which may become, for a sequential implementation, rather costly in 3D for large polynomial degrees). \end{inparaenum}
\end{remark}

Letting $\Zh\defi\Xh\times\Yh$ and $\Zhz\defi\Xhz\times\Yhz$, Problem~\eqref{eq:standard.discrete} can be equivalently rewritten: Find $(\uh,\ph)\in\Zhz$ such that
\begin{equation}\label{eq:standard.discrete.equiv}
  \Ah\big((\uh,\ph),(\vh,\qh)\big)=(\boldsymbol{f},\Curl_h\!\vTh)_\Omega\qquad\forall\,(\vh,\qh)\in\Zhz,
\end{equation}
where the bilinear form $\Ah:\Zh\times\Zh\to\Real$ is defined by
\begin{equation}\label{def:standard.Ah}
	\Ah\big((\wh,\rh),(\vh,\qh)\big) \defi \ah(\wh,\vh) + \bh(\vh,\rh) - \bh(\wh,\qh) + \ch(\rh,\qh).
\end{equation}
For future use, we also let
\begin{equation}\label{def:ZZhz}
  \begin{split}
    \ZZhz\defi&\left\{(\wh,\rh)\in\Zhz\,:\,-\bh(\wh,\qh) +\ch(\rh,\qh) = 0 \quad\forall\qh\in\Yhz\right\}\\
    =&\left\{(\wh,\rh)\in\Zhz\,:\,\Ah\big((\wh,\rh),(\underline{\vec{0}}_h,\qh)\big) = 0 \quad\forall\qh\in\Yhz\right\}.
  \end{split}
\end{equation}

\subsubsection{Stability analysis} \label{ssse:stab}

We recall that $\Xh$ is equipped with the seminorm $\|\cdot\|_{\boldsymbol{{\rm X}},h}$ defined by~\eqref{def:norms.h:curl}, which is such that $\|\cdot\|_{\boldsymbol{{\rm X}},h}=\ah(\cdot,\cdot)^{\nicefrac12}$.
We equip $\Yh$ with the norm
\begin{equation}\label{eq:norm.Y.h}
  \|\cdot\|_{{\rm Y},h}\defi\ch(\cdot,\cdot)^{\nicefrac12},
\end{equation}
and $\Zh$ with the seminorm
\begin{equation}\label{def:standard.norm}
  \|(\wh,\rh)\|_{\mathbb{Z},h} \defi\left(
  \|\wh\|_{\boldsymbol{{\rm X}},h}^2+\|\rh\|_{{\rm Y},h}^2
  \right)^{\nicefrac12}.
\end{equation}
\begin{lemma}[Norm $\|\cdot\|_{\mathbb{Z},h}$] \label{le:norm.Z}
	The map $\|\cdot\|_{\mathbb{Z},h}$ defines a norm on $\ZZhz$ defined by~\eqref{def:ZZhz}.
\end{lemma}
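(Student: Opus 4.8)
The plan is to show that $\|\cdot\|_{\mathbb{Z},h}$ is a seminorm that separates points on $\ZZhz$, the only nontrivial point being the implication $\|(\wh,\rh)\|_{\mathbb{Z},h}=0 \Rightarrow (\wh,\rh)=(\underline{\vec{0}}_h,\underline{0}_h)$. First I would observe that $\|(\wh,\rh)\|_{\mathbb{Z},h}=0$ forces both $\|\wh\|_{\boldsymbol{{\rm X}},h}=0$ and $\|\rh\|_{{\rm Y},h}=0$, since these two contributions are added in squares in~\eqref{def:standard.norm}.

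From $\|\rh\|_{{\rm Y},h}=\ch(\rh,\rh)^{\nicefrac12}=0$ and the definition~\eqref{def:standard.bf.ch} of $\ch$ (a sum of squared $L^2$-norms of $\rTh$ on elements and of $\rF$ on faces), one immediately gets $\rh=\underline{0}_h$. It remains to handle $\wh$: here I would invoke that $(\wh,\rh)\in\ZZhz$, so by~\eqref{def:ZZhz} one has $-\bh(\wh,\qh)+\ch(\rh,\qh)=0$ for all $\qh\in\Yhz$; since $\rh=\underline{0}_h$, this reduces to $\bh(\wh,\qh)=\big(\wTh,\Gkh\qh\big)_\Omega=0$ for all $\qh\in\Yhz$, which is precisely the condition defining $\XXhz$ in~\eqref{def:XXhz}. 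Hence $\wh\in\XXhz$, and by Corollary~\ref{co:norm.X} (norm property of $\|\cdot\|_{\boldsymbol{{\rm X}},h}$ on $\XXhz$), $\|\wh\|_{\boldsymbol{{\rm X}},h}=0$ yields $\wh=\underline{\vec{0}}_h$. This gives $(\wh,\rh)=(\underline{\vec{0}}_h,\underline{0}_h)$, as desired.

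There is no real obstacle here: the proof is essentially a matter of unpacking definitions and chaining together Theorem~\ref{thm:max.ineq} (via Corollary~\ref{co:norm.X}) with the characterization~\eqref{def:ZZhz} of $\ZZhz$. The mildly delicate point — worth stating explicitly — is that $\|\cdot\|_{\boldsymbol{{\rm X}},h}$ alone is only a seminorm on $\Xhz$ (its kernel is nontrivial there, see Remark~\ref{rem:control}), so the argument genuinely needs the extra divergence-type constraint built into $\ZZhz$ in order to conclude; the bilinear form $\ch$ simultaneously kills $\rh$ and, through the constraint, places $\wh$ in $\XXhz$. Triangle inequality and homogeneity of $\|\cdot\|_{\mathbb{Z},h}$ being inherited from those of $\|\cdot\|_{\boldsymbol{{\rm X}},h}$ and $\|\cdot\|_{{\rm Y},h}$, nothing further is needed.
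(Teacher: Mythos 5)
Your proof is correct and follows exactly the paper's argument: kill $\rh$ via the norm property of $\ch$ on $\Yh$, use the constraint in~\eqref{def:ZZhz} with $\rh=\underline{0}_h$ to place $\wh$ in $\XXhz$, and conclude with Corollary~\ref{co:norm.X}. Your remark that the divergence-type constraint is genuinely needed (since $\|\cdot\|_{\boldsymbol{{\rm X}},h}$ is only a seminorm on $\Xhz$) is a useful observation that the paper leaves implicit.
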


\begin{proof}
  The seminorm property being straightforward, it suffices to prove that, for all $(\wh,\rh)\in\ZZhz$, $\|(\wh,\rh)\|_{\mathbb{Z},h}=0$ implies $(\wh,\rh)=(\underline{\boldsymbol{0}}_h,\underline{0}_h)$.
	Let then $(\wh,\rh)\in\ZZhz$ satisfying $\|(\wh,\rh)\|_{\mathbb{Z},h} = 0 $. We infer that $\|\wh\|_{\boldsymbol{{\rm X}},h}=0$ and $\|\rh\|_{{\rm Y},h}=0$.
        Since $\|\cdot\|_{{\rm Y},h}$ is a norm on $\Yh$, the second relation directly implies that $\rh=\underline{0}_h$.
        Now, owing to the definitions~\eqref{def:ZZhz},~\eqref{def:standard.bf.bh}, and~\eqref{def:XXhz}, since $(\wh,\rh)\in\ZZhz$ and $\rh=\underline{0}_h$, we have $\wh\in\XXhz$. By Corollary~\ref{co:norm.X}, this implies $\wh=\underline{\vec{0}}_h$.
\end{proof}
\begin{lemma}[Well-posedness] \label{le:wp}
For all $\zh\in\Zh$,
  \begin{equation} \label{eq:coer}
    \Ah\big(\zh,\zh\big)=\|\zh\|_{\mathbb{Z},h}^2.
  \end{equation}
  Hence, Problem~\eqref{eq:standard.discrete} is well-posed, and the following a priori bound holds true:
  \begin{equation}\label{eq:standard.discrete:a-priori}
    \|(\uh,\ph)\|_{\mathbb{Z},h}\leq\|\vec{f}\|_{\Omega}.
  \end{equation}
\end{lemma}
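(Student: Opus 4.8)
The statement has two parts: the algebraic identity~\eqref{eq:coer}, and the consequences for well-posedness and the a priori bound. The identity is the engine; once it is in hand, the rest is an abstract wrap-up.

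\emph{Step 1: the coercivity identity.} I would simply expand $\Ah(\zh,\zh)$ for $\zh=(\wh,\rh)$ using the definition~\eqref{def:standard.Ah}. The two off-diagonal terms $\bh(\vh,\rh)$ and $-\bh(\wh,\qh)$ cancel on the diagonal (set $\vh=\wh$, $\qh=\rh$): $\bh(\wh,\rh)-\bh(\wh,\rh)=0$. What remains is $\ah(\wh,\wh)+\ch(\rh,\rh)$. Now $\ah(\wh,\wh)=\|\Curl_h\wTh\|_\Omega^2+\sh(\wh,\wh)$ by~\eqref{def:standard.bf.ah}, and $\sh(\wh,\wh)$ is exactly the boundary sum in the seminorm~\eqref{def:norms.h:curl}, so $\ah(\wh,\wh)=\|\wh\|_{\boldsymbol{{\rm X}},h}^2$; likewise $\ch(\rh,\rh)=\|\rh\|_{{\rm Y},h}^2$ by~\eqref{eq:norm.Y.h}. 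Summing and invoking~\eqref{def:standard.norm} gives $\Ah(\zh,\zh)=\|\wh\|_{\boldsymbol{{\rm X}},h}^2+\|\rh\|_{{\rm Y},h}^2=\|\zh\|_{\mathbb{Z},h}^2$. This is a short computation with no obstacle.

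\emph{Step 2: well-posedness.} The linear system~\eqref{eq:standard.discrete.equiv} is square (finite-dimensional), so it suffices to prove uniqueness, i.e.\ that the only solution of $\Ah((\uh,\ph),(\vh,\qh))=0$ for all test functions is zero. Taking $(\vh,\qh)=(\uh,\ph)$ and using Step~1 gives $\|(\uh,\ph)\|_{\mathbb{Z},h}^2=0$. To conclude that $(\uh,\ph)=(\underline{\boldsymbol 0}_h,\underline 0_h)$ I need $\|\cdot\|_{\mathbb{Z},h}$ to be a genuine norm on the relevant space. The subtlety is that $\|\cdot\|_{\mathbb{Z},h}$ is only a \emph{seminorm} on all of $\Zhz$, and is a norm only on $\ZZhz$ (Lemma~\ref{le:norm.Z}). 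But a homogeneous solution $(\uh,\ph)$ automatically lies in $\ZZhz$: testing~\eqref{eq:standard.discrete.equiv} with $(\underline{\boldsymbol 0}_h,\qh)$ shows $-\bh(\uh,\qh)+\ch(\ph,\qh)=0$ for all $\qh\in\Yhz$, which is precisely the defining condition~\eqref{def:ZZhz}. Hence Lemma~\ref{le:norm.Z} applies and $(\uh,\ph)=(\underline{\boldsymbol 0}_h,\underline 0_h)$, giving uniqueness and therefore existence and uniqueness of the solution of Problem~\eqref{eq:standard.discrete}.

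\emph{Step 3: the a priori bound.} Let $(\uh,\ph)$ be the solution. As just noted it belongs to $\ZZhz$, on which $\|\cdot\|_{\mathbb{Z},h}$ is a norm. Testing~\eqref{eq:standard.discrete.equiv} with $(\vh,\qh)=(\uh,\ph)$, Step~1 yields $\|(\uh,\ph)\|_{\mathbb{Z},h}^2=(\boldsymbol f,\Curl_h\uTh)_\Omega$. A Cauchy--Schwarz inequality bounds the right-hand side by $\|\boldsymbol f\|_\Omega\,\|\Curl_h\uTh\|_\Omega\le\|\boldsymbol f\|_\Omega\,\|\uh\|_{\boldsymbol{{\rm X}},h}\le\|\boldsymbol f\|_\Omega\,\|(\uh,\ph)\|_{\mathbb{Z},h}$, and dividing through gives~\eqref{eq:standard.discrete:a-priori}. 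I do not expect a genuine obstacle anywhere; the only point requiring care is recognizing that the homogeneous/actual solution lands in $\ZZhz$ so that the seminorm upgrades to a norm via Lemma~\ref{le:norm.Z}, and implicitly that $\XXhz$-membership there is controlled through Corollary~\ref{co:norm.X}, i.e.\ ultimately through the discrete Weber inequality of Theorem~\ref{thm:max.ineq}.
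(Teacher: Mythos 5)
Your proposal is correct and follows essentially the same route as the paper: expand $\Ah(\zh,\zh)$ so the $\bh$ terms cancel to get~\eqref{eq:coer}, deduce injectivity of the square system by first placing the homogeneous solution in $\ZZhz$ (via testing with $(\underline{\boldsymbol 0}_h,\qh)$) and then invoking Lemma~\ref{le:norm.Z}, and obtain the a priori bound by Cauchy--Schwarz together with $\|\uh\|_{\boldsymbol{\rm X},h}\le\|(\uh,\ph)\|_{\mathbb{Z},h}$. No gaps.
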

\begin{proof}
  The identity~\eqref{eq:coer} is a direct consequence of~\eqref{def:standard.Ah} and~\eqref{def:standard.norm} along with the definitions of $\|\cdot\|_{\boldsymbol{\rm X},h}$ and $\|\cdot\|_{{\rm Y},h}$.
  To prove well-posedness, since the linear system associated to Problem~\eqref{eq:standard.discrete} is square, it is sufficient to prove injectivity. Assume that $\Ah\big((\uh,\ph),(\vh,\qh)\big)=0$ for all $(\vh,\qh)\in\Zhz$. Choosing $(\vh,\qh)=(\underline{\vec{0}}_h,\qh)$ and using~\eqref{def:ZZhz}, we first infer that $(\uh,\ph)\in\ZZhz$. Choosing $(\vh,\qh)=(\uh,\ph)$ and using~\eqref{eq:coer}, we then get
  $$\|(\uh,\ph)\|_{\mathbb{Z},h}^2=\Ah\big((\uh,\ph),(\uh,\ph)\big)=0,$$
  which, by Lemma~\ref{le:norm.Z}, eventually yields $(\uh,\ph)=(\underline{\vec{0}}_h,\underline{0}_h)$. The a priori bound~\eqref{eq:standard.discrete:a-priori} directly follows from~\eqref{eq:coer} with $\zh=(\uh,\ph)$,~\eqref{eq:standard.discrete.equiv}, the Cauchy--Schwarz inequality, and $\|\uh\|_{\boldsymbol{{\rm X}},h}\leq\|(\uh,\ph)\|_{\mathbb{Z},h}$.
\end{proof}

\subsubsection{Error analysis}

We recall that $(\vec{u},p)\in\Hzcurl\times H^1_0(\Omega)$ denotes the unique solution to Problem~\eqref{eq:standard.weak}.
We assume from now on that $\vec{u}$ possesses the additional regularity $\vec{u}\in \boldsymbol H^1(\Omega;\mathbb{R}^3)$, and we let $\hatuh \defi \IntUh\boldsymbol{u}\in\Xhz$ and $\hatph\defi \IntPh p\in\Yhz$.
In the spirit of~\ifMMMAS Ref.~\citen{Di-Pietro.Droniou:18} \else\cite{Di-Pietro.Droniou:18} \fi (see also~\cite[Appendix~A]{Di-Pietro.Droniou:20}), we estimate the errors
\begin{equation}\label{def:standard.errors}
	\Xhz\ni\erruh\defi \uh-\hatuh,
	\qquad
	\Yhz\ni\errph\defi \ph - \hatph,
\end{equation}
where $(\uh,\ph)\in\Xhz\times\Yhz$ is the unique solution to Problem~\eqref{eq:standard.discrete}.
Notice that, since $p=0$ in $\Omega$, we actually have $\hatph=\underline{0}_h$ and $\errph = \ph$.
Recalling \eqref{eq:standard.discrete.equiv} and \eqref{def:standard.Ah}, the errors $(\erruh,\errph)\in\Zhz$ solve
\begin{equation}\label{eq:standard.discrete:Ah:error}
	\Ah\big((\erruh,\errph),(\vh,\qh)\big) = \lh(\vh) + \mh\big(\qh\big) \qquad \forall (\vh,\qh)\in\Zhz,
\end{equation}
where we have defined the consistency error linear forms
\begin{subequations}\label{def:standard.consistency.errors}
	\begin{alignat}{1}\label{def:standard.consistency.errors.lh}
		\lh(\vh) &\defi (\boldsymbol{f},\Curl_h\!\vTh)_\Omega - \ah(\hatuh,\vh),
		\\\label{def:standard.consistency.errors.mh}
		\mh\big(\qh\big) & \defi \bh(\hatuh,\qh).
	\end{alignat}
\end{subequations}

\begin{theorem}[Energy-error estimate]\label{thm:standard.error}
  Assume that
  $$\vec{u}\in\Hzcurl\cap \boldsymbol H^1(\Omega;\mathbb{R}^3)\cap \boldsymbol H^{k+2}(\t_h;\mathbb{R}^3).$$
  Then, the following holds true, with $(\erruh,\errph)\in\Zhz$ defined by \eqref{def:standard.errors}:
  \begin{equation}\label{thm:standard.error.eq}
    \|(\erruh,\errph)\|_{\mathbb{Z},h} \lesssim \left(\sum_{T\in\t_h} h_T^{2(k+1)}|\boldsymbol{u}|_{\boldsymbol H^{k+2}(T;\mathbb{R}^3)}^2\right)^{\nicefrac{1}{2}}.
  \end{equation}
\end{theorem}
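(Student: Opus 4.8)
The strategy is the standard \emph{third Strang lemma} / consistency-error argument for HHO methods. The errors $(\erruh,\errph)\in\Zhz$ solve the problem~\eqref{eq:standard.discrete:Ah:error} driven by the consistency error forms $\lh$ and $\mh$. Since, by Lemma~\ref{le:wp}, the bilinear form $\Ah$ is coercive on $\Zhz$ for the norm $\|\cdot\|_{\mathbb{Z},h}$ (indeed $\Ah(\zh,\zh)=\|\zh\|_{\mathbb{Z},h}^2$), testing~\eqref{eq:standard.discrete:Ah:error} with $(\vh,\qh)=(\erruh,\errph)$ immediately gives
\[
  \|(\erruh,\errph)\|_{\mathbb{Z},h}^2 = \lh(\erruh)+\mh(\errph)
  \le \Big(\sup_{\vh\in\Xhz\setminus\{\underline{\vec 0}_h\}}\frac{|\lh(\vh)|}{\|\vh\|_{\boldsymbol{{\rm X}},h}} + \sup_{\qh\in\Yhz\setminus\{\underline{0}_h\}}\frac{|\mh(\qh)|}{\|\qh\|_{{\rm Y},h}}\Big)\,\|(\erruh,\errph)\|_{\mathbb{Z},h},
\]
so that it suffices to bound the two consistency errors $\lh$ and $\mh$ in the respective dual norms by the right-hand side of~\eqref{thm:standard.error.eq}. (One subtlety: $\|\cdot\|_{\boldsymbol{{\rm X}},h}$ is only a seminorm on $\Xhz$, so strictly speaking one should argue on $\XXhz$ via Corollary~\ref{cor:max.ineq} with $\dh=\ch$, or equivalently invoke that $(\erruh,\errph)\in\ZZhz$ because both exact and discrete problems satisfy the second equation exactly — the commutation property~\eqref{lem:commut.GTkITk.eq} ensures $\mh(\qh)=\bh(\hatuh,\qh)=(\vec u,\Gkh\qh)_\Omega$ relates to $b(\vec u,\cdot)$, which vanishes by~\eqref{eq:standard.weak.2}; one then closes using Lemma~\ref{le:norm.Z}.)

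\textbf{Bounding $\mh$.} First I would show $\mh$ actually vanishes, or is of the optimal order. By~\eqref{def:standard.consistency.errors.mh} and~\eqref{def:standard.bf.bh}, $\mh(\qh)=\bh(\hatuh,\qh)=(\widehat{\vec u}_h,\Gkh\qh)_\Omega$ where $\widehat{\vec u}_h=(\vec\pi_{\boldsymbol{\cal P},T}^{k+1}\vec u_{\mid T})_T$; since $\Gkh\qh\in\PkpdTh$, the projector can be removed and $\mh(\qh)=(\vec u,\Gkh\qh)_\Omega$. Undoing the definition~\eqref{def:GTk} of $\GkT$ element by element and integrating by parts, using $\Div\vec u=0$ (from~\eqref{eq:standard.strong:2}) and $\qF=\pi_{{\cal P},F}^{k+1}(q_{\mid F})$ from the standard de Rham–type consistency argument, one sees $\mh(\qh)=0$ for $\qh=\IntYh q$ with $q\in H^1_0$; but here $\qh$ is an arbitrary element of $\Yhz$, so one instead uses that $\widehat{\vec u}_h$ interpolates a divergence-free field with vanishing normal trace and that the boundary contributions telescope. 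In practice the clean route is: $(\vec u,\Gkh\qh)_\Omega = \sum_T\big[-( \pi^k_{{\cal P},T}(\Div\vec u),q_T)_T+\ldots\big]$ — I would follow the element-local computation in~\cite[Section~4.2]{Di-Pietro.Droniou:20} to obtain $|\mh(\qh)|\lesssim \big(\sum_T h_T^{2(k+1)}|\vec u|_{\boldsymbol H^{k+2}(T;\mathbb R^3)}^2\big)^{1/2}\|\qh\|_{{\rm Y},h}$, using approximation properties of $\pi^k_{{\cal P},T}$, $\pi^{k+1}_{{\cal P},F}$ and discrete trace inequalities~\cite[Lemma~1.32]{Di-Pietro.Droniou:20}. (If $\Div\vec u=0$ is exploited fully, $\mh\equiv0$, which is cleaner.)

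\textbf{Bounding $\lh$ — the main obstacle.} This is the substantive estimate. Write $\lh(\vh)=(\vec f,\Curl_h\vec{\rm v}_{Th})_\Omega-\ah(\hatuh,\vh)$ and use $\vec f=\Curl\vec u$ together with the definition~\eqref{def:standard.bf.ah} of $\ah$:
\[
  \lh(\vh)=\big(\Curl\vec u-\Curl_h\widehat{\vec u}_{Th},\,\Curl_h\vec{\rm v}_{Th}\big)_\Omega - \sh(\hatuh,\vh).
\]
For the first term, $\Curl_h\widehat{\vec u}_{Th}=\Curl_h(\vec\pi^{k+1}_{\boldsymbol{\cal P},h}\vec u)$, and the difference $\Curl\vec u-\Curl_h(\vec\pi^{k+1}_{\boldsymbol{\cal P},h}\vec u)$ is controlled $\boldsymbol L^2$-wise by $\big(\sum_T h_T^{2(k+1)}|\vec u|_{\boldsymbol H^{k+2}(T)}^2\big)^{1/2}$ via the standard polynomial approximation estimate~\cite[Theorem~1.45]{Di-Pietro.Droniou:20} applied to $\Curl\vec u\in\boldsymbol H^{k+1}(\t_h)$ (one commutes $\Curl$ with $\vec\pi^{k+1}_{\boldsymbol{\cal P},T}$ up to an $O(h_T^{k+1})$ term, or directly estimates $\|\Curl(\vec u-\vec\pi^{k+1}_{\boldsymbol{\cal P},T}\vec u)\|_T$). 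For the stabilization term, $|\sh(\hatuh,\vh)|\le \sh(\hatuh,\hatuh)^{1/2}\|\vh\|_{\boldsymbol{{\rm X}},h}$, and one must show $\sh(\hatuh,\hatuh)^{1/2}\lesssim \big(\sum_T h_T^{2(k+1)}|\vec u|_{\boldsymbol H^{k+2}(T)}^2\big)^{1/2}$: this is where the structure of the face unknowns matters. Each summand is $h_F^{-1}\|\vec\pi^{k+1}_{\boldsymbol{\cal G},F}(\vec\gamma_{\tau,F}(\vec\pi^{k+1}_{\boldsymbol{\cal P},T}\vec u)-\vec\pi^{k+1}_{\boldsymbol{\cal G},F}(\vec\gamma_{\tau,F}\vec u))\|_F^2$; inserting $\pm\vec\gamma_{\tau,F}\vec u$ and using $\boldsymbol L^2(F)$-boundedness of $\vec\pi^{k+1}_{\boldsymbol{\cal G},F}$, this is $\lesssim h_F^{-1}\|\vec\gamma_{\tau,F}(\vec u-\vec\pi^{k+1}_{\boldsymbol{\cal P},T}\vec u)\|_F^2$, which a continuous trace inequality~\cite[Lemma~1.31]{Di-Pietro.Droniou:20} followed by the optimal approximation estimate for $\vec\pi^{k+1}_{\boldsymbol{\cal P},T}$ bounds by $h_T^{2(k+1)}|\vec u|_{\boldsymbol H^{k+2}(T;\mathbb R^3)}^2$. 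Summing over $F\in\f_T$ and $T\in\t_h$ (using that $\mathrm{card}(\f_T)$ is uniformly bounded and $h_F\simeq h_T$) gives the claim. Collecting the bounds on $\lh$ and $\mh$ and dividing through by $\|(\erruh,\errph)\|_{\mathbb{Z},h}$ yields~\eqref{thm:standard.error.eq}. The main difficulty is purely bookkeeping: getting each consistency term to exhibit a \emph{full} $h_T^{k+1}$ power — in particular verifying that no $h_F^{-1}$ weight in $\sh$ or in the trace estimates degrades the order — but all ingredients are classical HHO approximation results, so no genuinely new idea is needed beyond what is already in the excerpt.
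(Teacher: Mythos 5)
Your proposal is correct and follows essentially the same route as the paper's proof: the coercivity identity $\Ah(\zh,\zh)=\|\zh\|_{\mathbb{Z},h}^2$ reduces the estimate to dual-(semi)norm bounds on the consistency errors $\lh$ and $\mh$, which are then obtained exactly as you describe (splitting $\lh$ into the broken-curl approximation error and the stabilization term, and handling $\mh$ by element-wise integration by parts in the definition of $\GkT$ together with $\Div\vec{u}=0$). One caveat: your aside that $\mh$ could be made to vanish identically by exploiting $\Div\vec{u}=0$ is false — the two integrations by parts leave the nonzero face residual $\sum_{T\in\t_h}\sum_{F\in\f_T}\big((\vec{\pi}_{\boldsymbol{\cal P},T}^{k+1}(\boldsymbol{u}_{\mid T})-\boldsymbol{u})_{\mid F}{\cdot}\normal_{TF},\qF-{\rm q}_{T\mid F}\big)_F$, which is precisely the $O(h^{k+1})$ contribution the paper estimates, so your main bound for $\mh$ (rather than the claimed identity $\mh\equiv 0$) is the one that closes the argument.
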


\begin{proof}\label{proof:standard.error}
  Since $(\erruh,\errph)\in\Zhz$, by~\eqref{eq:coer} with $\zh=(\erruh,\errph)$ and~\eqref{eq:standard.discrete:Ah:error}, we infer
	\begin{equation}\label{proof:standard.error.2}
		\|(\erruh,\errph)\|_{\mathbb{Z},h} \leq \max_{(\vh,\qh)\in\Zhz,\|(\vh,\qh)\|_{\mathbb{Z},h}=1}\big(\lh(\vh) + \mh\big(\qh\big)\big).
	\end{equation}

        Let us first focus on $\lh(\vh)$ for $\vh\in\Xhz$. Combining its definition~\eqref{def:standard.consistency.errors.lh} with the fact that $\vec{f}=\Curl\vec{u}$ in $\Omega$, and the definition~\eqref{def:Icurl.h} of $\IntUh\boldsymbol{u}$, we infer
	\begin{equation*}
	  \begin{aligned}
	    |\lh(\vh)|
            &= \left|
            \big(\Curl\vec{u}-\Curl_h(\vec{\pi}_{\boldsymbol{\cal P},h}^{k+1}\vec{u}),\Curl_h\!\vTh\big)_{\Omega}-\sh(\hatuh,\vh)
            \right|
		\\
		&\leq \Big(
                \|\Curl\vec{u}-\Curl_h(\vec{\pi}_{\boldsymbol{\cal P},h}^{k+1}\vec{u})\|^2_{\Omega}
                  + \sh(\hatuh,\hatuh)
                  \Big)^{\nicefrac12}\|\vh\|_{\boldsymbol{{\rm X}},h},
	  \end{aligned}
	\end{equation*}	 
	where we have used the triangle/Cauchy--Schwarz inequalities and the definition~\eqref{def:norms.h:curl} of $\|\vh\|_{\boldsymbol{{\rm X}},h}$ to pass to the second line.
        The quantity $\|\Curl\vec{u}-\Curl_h(\vec{\pi}_{\boldsymbol{\cal P},h}^{k+1}\vec{u})\|^2_{\Omega}$ is estimated using the approximation properties of $\vec{\pi}_{\boldsymbol{\cal P},h}^{k+1}$ (see, e.g.,~\cite[Theorem 1.45]{Di-Pietro.Droniou:20}). For the quantity $\sh(\hatuh,\hatuh)$, recalling the definition~\eqref{def:standard.bf.sh} of $\sh$ and using the $\boldsymbol L^2(F;\mathbb{R}^2)$-boundedness of $\vec{\pi}_{\boldsymbol{{\cal G}},F}^{k+1}$, we have
	\begin{equation}\label{eq:hatuh}
          \begin{split}
		\sh(\hatuh,&\hatuh) = \sum_{T\in\t_h}\sum_{F\in\f_T} h_F^{-1}\|\vec{\pi}_{\boldsymbol{{\cal G}},F}^{k+1}\big(\vec{\gamma}_{\tau,F}(\vec{\pi}_{\boldsymbol{\cal P},T}^{k+1}(\boldsymbol{u}_{\mid T}) - {\boldsymbol{u}})\big) \|_F^2
		\\
		&\leq \sum_{T\in\t_h}\sum_{F\in\f_T} h_F^{-1}\|\vec{\pi}_{\boldsymbol{\cal P},T}^{k+1}(\boldsymbol{u}_{\mid T}) - \boldsymbol{u} \|_F^2
		\lesssim \sum_{T\in\t_h}h_T^{2(k+1)}|\boldsymbol{u}|_{\boldsymbol H^{k+2}(T;\mathbb{R}^3)}^2,
          \end{split}
	\end{equation}
	where, for all $T\in\t_h$, we have used the approximation properties of $\vec{\pi}_{\boldsymbol{\cal P},T}^{k+1}$ on the faces of $T$. Gathering the different estimates, we get
        \begin{equation} \label{eq:lh}
          |\lh(\vh)|
          \lesssim\left(\sum_{T\in\t_h}h_T^{2(k+1)}|\boldsymbol{u}|_{\boldsymbol H^{k+2}(T;\mathbb{R}^3)}^2\right)^{\nicefrac12}\|\vh\|_{\boldsymbol{{\rm X}},h}.
        \end{equation}
	
	Let us now focus on $\mh\big(\qh\big)$ for $\qh\in\Yhz$. Starting from~\eqref{def:standard.consistency.errors.mh}, performing an element-by-element integration by parts in~\eqref{def:GTk}, and using that $\Grad \qT\in\boldsymbol{{\cal G}}^{k-1}(T)\subset\PkpdT$, we infer
	\begin{align*}\notag
		\mh\big(\qh\big) &= \sum_{T\in\t_h}\Bigg(
			(\Grad \qT,\boldsymbol{u})_T + \sum_{F\in\f_T}\big(\vec{\pi}_{\boldsymbol{\cal P},T}^{k+1}(\boldsymbol{u}_{\mid T})_{\mid F}{\cdot}\normal_{TF},\qF-{\rm q}_{T\mid F}\big)_F
		\Bigg)
		\\
		&=\sum_{T\in\t_h}\sum_{F\in\f_T}\big((\vec{\pi}_{\boldsymbol{\cal P},T}^{k+1}(\boldsymbol{u}_{\mid T})-\boldsymbol{u})_{\mid F}{\cdot}\normal_{TF},\qF-{\rm q}_{T\mid F}\big)_F, 
	\end{align*}
	where the last identity follows from another element-by-element integration by parts, and from the fact that $\Div\vec{u}=0$ in $\Omega$, and that $\boldsymbol{u}\in \boldsymbol H^1(\Omega;\mathbb{R}^3)$ along with $\qF=0$ for all $F\in\f_h^{\rm b}$.
	By the triangle and Cauchy--Schwarz inequalities, one then gets
	\begin{equation}\label{eq:bh.hatuh}
          \begin{split}
		|\mh\big(\qh\big)|\leq \Bigg(
			\sum_{T\in\t_h}\sum_{F\in\f_T}h_F^{-1}\|&\vec{\pi}_{\boldsymbol{\cal P},T}^{k+1}(\boldsymbol{u}_{\mid T})-\boldsymbol{u}\|_F^2
		        \Bigg)^{\nicefrac{1}{2}}\\&\times\Bigg(\sum_{T\in\t_h}\sum_{F\in\f_T}h_F\|\qF-{\rm q}_{T\mid F}\|_F^2\Bigg)^{\nicefrac{1}{2}}.
          \end{split}
	\end{equation}
	Using, for all $T\in\t_h$, the approximation properties of $\vec{\pi}_{\boldsymbol{\cal P},T}^{k+1}$ on the faces of $T$ for the first factor on the right-hand side, and the triangle inequality along with a discrete trace inequality (see, e.g.,~\cite[Lemma 1.32]{Di-Pietro.Droniou:20}) for the second factor, we infer
	\begin{align}\label{eq:mh}
	  |\mh\big(\qh\big)|
          \lesssim \Bigg(
			\sum_{T\in\t_h}h_T^{2(k+1)}|\boldsymbol{u}|_{\boldsymbol H^{k+2}(T;\mathbb{R}^3)}^2
		\Bigg)^{\nicefrac{1}{2}}\|\qh\|_{{\rm Y},h}.
	\end{align}
	Plugging~\eqref{eq:lh} and~\eqref{eq:mh} into~\eqref{proof:standard.error.2} for $(\vh,\qh)$ such that $\|(\vh,\qh)\|_{\mathbb{Z},h}=1$ finally yields~\eqref{thm:standard.error.eq}.
\end{proof}

\subsubsection{Numerical results}\label{sec:standard:num}

Let the domain $\Omega$ be the unit cube $(0,1)^3$. We consider Problem~\eqref{eq:standard.strong} with exact solution
\begin{equation} \label{eq:sol}
\boldsymbol{u}(x_1,x_2,x_3) \defi \left(
\begin{tabular}{l}
$\cos(\pi x_2)\cos(\pi x_3)$
\\
$\cos(\pi x_1)\cos(\pi x_3)$
\\
$\cos(\pi x_1)\cos(\pi x_2)$
\end{tabular}
\right).
\end{equation}
Clearly, the function $\boldsymbol{u}$ satisfies~\eqref{eq:standard.strong:2}.
The source $\boldsymbol{f}$ is set according to~\eqref{eq:standard.strong:1}, and the zero tangential boundary condition~\eqref{eq:standard.strong:bc.u} is replaced by the non-homogeneous boundary condition stemming from~\eqref{eq:sol}.

We solve the discrete Problem~\eqref{eq:standard.discrete} with amended right-hand side accounting for the non-homogeneous boundary condition on two refined mesh sequences, of respectively cubic and regular tetrahedral meshes.
For each problem, the element unknowns for both the magnetic field and the Lagrange multiplier are locally eliminated using a Schur complement technique. This step is fully parallelizable. The resulting (condensed) global linear system is solved using the SparseLU direct solver of the Eigen library, on an Intel Xeon E-2176M 2.70GHz$\times$12 with 16GB of RAM (and up to 150GB of swap).
For $k\in\{0,1,2\}$, we depict on Figures~\ref{fig:standard.error.cubic} and~\ref{fig:standard.error.tetra.nostab}, respectively for the cubic and (regular) tetrahedral mesh families, the relative energy-error $\|\uh-\IntUh\boldsymbol{u}\|_{\boldsymbol{\rm X},h}/\|\IntUh\boldsymbol{u}\|_{\boldsymbol{\rm X},h}$ (top row) and $L^2$-error $\|\uTh-\boldsymbol{\pi}_{\boldsymbol{\cal P},h}^{k+1}\boldsymbol{u}\|_{\Omega}/\|\boldsymbol{\pi}_{\boldsymbol{\cal P},h}^{k+1}\boldsymbol{u}\|_{\Omega}$ (bottom row) as functions of
\begin{inparaenum}[(i)]
  \item the meshsize (left column),
  \item the solution time in seconds, i.e.~the time needed to solve the (condensed) global linear system (center column), and
  \item the number of (interface) degrees of freedom (DoF) (right column).
\end{inparaenum} 
For the two mesh families, we obtain, as predicted by Theorem~\ref{thm:standard.error}, an energy-error convergence rate of order $k+1$. We also observe a convergence rate of order $k+2$ for the $L^2$-error.
We remark that, whenever the solution is smooth enough (at least locally), raising the polynomial degree is computationally much more efficient than refining the mesh to increase the accuracy.
Following Remark~\ref{rk:tet.case.1}, we also solve on the (matching) tetrahedral mesh family a modifed version of Problem~\eqref{eq:standard.discrete} in which $\ch$ is removed, and we display on Figure~\ref{fig:standard.error.tetra.nostab} the results in dashed lines.
Also in this case, we obtain the predicted energy-error convergence rate of order $k+1$, and observe a convergence rate of order $k+2$ for the $L^2$-error.
We remark that the results using $\ch$ are always better (in absolute value) than those obtained without using it.

%% hexahedral mesh
\begin{figure}\centering
  \ref{legend:hexa}
  \vspace{0.50cm}\\
  %% energy error u
  %% error vs h
  \begin{minipage}{0.30\textwidth}
    \ifMMMAS 
    \begin{tikzpicture}[scale=0.55]
    \else
    \begin{tikzpicture}[scale=0.60]
    \fi
      \begin{loglogaxis}[legend columns=-1, legend to name=legend:hexa]
        \addplot table[x=meshsize,y=errXnorm_u] {cv_field_cos/k0_hex.dat};
        \addplot table[x=meshsize,y=errXnorm_u] {cv_field_cos/k1_hex.dat};
        \addplot table[x=meshsize,y=errXnorm_u] {cv_field_cos/k2_hex.dat};
        \logLogSlopeTriangle{0.90}{0.4}{0.1}{1}{black};
        \logLogSlopeTriangle{0.90}{0.4}{0.1}{2}{black};
        \logLogSlopeTriangle{0.90}{0.4}{0.1}{3}{black};
        \legend{$k=0$,$k=1$,$k=2$};          
      \end{loglogaxis}
    \end{tikzpicture}
  \end{minipage}
  \hspace{0.025\textwidth}
  %% error vs solution time
  \begin{minipage}{0.30\textwidth}
    \ifMMMAS 
    \begin{tikzpicture}[scale=0.55]
    \else
    \begin{tikzpicture}[scale=0.60]
    \fi
      \begin{loglogaxis}
        \addplot table[x=tot_solution_time,y=errXnorm_u] {cv_field_cos/k0_hex.dat};
        \addplot table[x=tot_solution_time,y=errXnorm_u] {cv_field_cos/k1_hex.dat};
        \addplot table[x=tot_solution_time,y=errXnorm_u] {cv_field_cos/k2_hex.dat};
      \end{loglogaxis}
    \end{tikzpicture}
  \end{minipage}
  \hspace{0.025\textwidth} 
  %% error vs nDOFs
  \begin{minipage}{0.30\textwidth}
    \ifMMMAS 
    \begin{tikzpicture}[scale=0.55]
    \else
    \begin{tikzpicture}[scale=0.60]
    \fi
      \begin{loglogaxis}
        \addplot table[x=n_DOFs,y=errXnorm_u]{cv_field_cos/k0_hex.dat};
        \addplot table[x=n_DOFs,y=errXnorm_u]{cv_field_cos/k1_hex.dat};
        \addplot table[x=n_DOFs,y=errXnorm_u]{cv_field_cos/k2_hex.dat};
        \logLogSlopeTriangleNDOFs{0.10}{-0.4}{0.1}{1/3}{black};
        \logLogSlopeTriangleNDOFs{0.10}{-0.4}{0.1}{2/3}{black};
        \logLogSlopeTriangleNDOFs{0.10}{-0.4}{0.1}{1}{black};
      \end{loglogaxis}
    \end{tikzpicture}
  \end{minipage}
  \vspace{0.25cm}\\
  %% L2 error u
  %% error vs h
  \begin{minipage}{0.30\textwidth}
    \ifMMMAS 
    \begin{tikzpicture}[scale=0.55]
    \else
    \begin{tikzpicture}[scale=0.60]
    \fi
      \begin{loglogaxis}
        \addplot table[x=meshsize,y=errL2_u] {cv_field_cos/k0_hex.dat};
        \addplot table[x=meshsize,y=errL2_u] {cv_field_cos/k1_hex.dat};
        \addplot table[x=meshsize,y=errL2_u] {cv_field_cos/k2_hex.dat};
        \logLogSlopeTriangle{0.90}{0.4}{0.1}{2}{black};
        \logLogSlopeTriangle{0.90}{0.4}{0.1}{3}{black};
        \logLogSlopeTriangle{0.90}{0.4}{0.1}{4}{black};
      \end{loglogaxis}
    \end{tikzpicture}
  \end{minipage}
  \hspace{0.025\textwidth}
  %% error vs solution time
  \begin{minipage}{0.30\textwidth}
     \ifMMMAS 
    \begin{tikzpicture}[scale=0.55]
    \else
    \begin{tikzpicture}[scale=0.60]
    \fi
      \begin{loglogaxis}
        \addplot table[x=tot_solution_time,y=errL2_u] {cv_field_cos/k0_hex.dat};
        \addplot table[x=tot_solution_time,y=errL2_u] {cv_field_cos/k1_hex.dat};
        \addplot table[x=tot_solution_time,y=errL2_u] {cv_field_cos/k2_hex.dat};
      \end{loglogaxis}
    \end{tikzpicture}
  \end{minipage}
  \hspace{0.025\textwidth}
  %% error vs nDOFs
  \begin{minipage}{0.30\textwidth}
    \ifMMMAS 
    \begin{tikzpicture}[scale=0.55]
    \else
    \begin{tikzpicture}[scale=0.60]
    \fi
      \begin{loglogaxis}
        \addplot table[x=n_DOFs,y=errL2_u]{cv_field_cos/k0_hex.dat};
        \addplot table[x=n_DOFs,y=errL2_u]{cv_field_cos/k1_hex.dat};
        \addplot table[x=n_DOFs,y=errL2_u]{cv_field_cos/k2_hex.dat};
        \logLogSlopeTriangleNDOFs{0.10}{-0.4}{0.1}{2/3}{black};
        \logLogSlopeTriangleNDOFs{0.10}{-0.4}{0.1}{1}{black};
        \logLogSlopeTriangleNDOFs{0.10}{-0.4}{0.1}{4/3}{black};
      \end{loglogaxis}
    \end{tikzpicture}
  \end{minipage}
  \caption{\label{fig:standard.error.cubic}
    Relative energy-error ${\|\uh-\IntXh\boldsymbol{u}\|_{\boldsymbol{\rm X},h}}/{\|\IntXh\boldsymbol{u}\|_{\boldsymbol{\rm X},h}}$ (\emph{top row}) and
    $L^2$-error ${\|\uTh-\boldsymbol{\pi}_{\boldsymbol{\cal P},h}^{k+1}\boldsymbol{u}\|_{\Omega}}/{\|\boldsymbol{\pi}_{\boldsymbol{\cal P},h}^{k+1}\boldsymbol{u}\|_{\Omega}}$ (\emph{bottom row})
    versus meshsize $h$ (\emph{left column}), solution time (\emph{center column}), and number of DoF (\emph{right column}) on cubic meshes for the test-case of Section~\ref{sec:standard:num}.}
\end{figure}

%% tetrahedral mesh ch vs nostab
\begin{figure}\centering
  \ref{legend:tria}
  \vspace{0.50cm}\\
  %% energy error u
  %% error vs h
  \begin{minipage}[b]{0.30\columnwidth}
    \ifMMMAS 
    \begin{tikzpicture}[scale=0.55]
    \else
    \begin{tikzpicture}[scale=0.60]
    \fi
      \begin{loglogaxis}[legend columns=-1, legend to name=legend:tria]
        \addplot table[x=meshsize,y=errXnorm_u]{cv_field_cos/k0_tet_ch.dat};
        \addplot table[x=meshsize,y=errXnorm_u]{cv_field_cos/k1_tet_ch.dat};
        \addplot table[x=meshsize,y=errXnorm_u]{cv_field_cos/k2_tet_ch.dat};
        \addplot[dashed,color=blue,mark=+] table[x=meshsize,y=errXnorm_u] {cv_field_cos/k0_tet_nostab.dat};
        \addplot[dashed,color=red,mark=+] table[x=meshsize,y=errXnorm_u] {cv_field_cos/k1_tet_nostab.dat};
        \addplot[dashed,color=brown,mark=+] table[x=meshsize,y=errXnorm_u] {cv_field_cos/k2_tet_nostab.dat};
        \logLogSlopeTriangle{0.90}{0.4}{0.1}{1}{black};
        \logLogSlopeTriangle{0.90}{0.4}{0.1}{2}{black};
        \logLogSlopeTriangle{0.90}{0.4}{0.1}{3}{black};
        \legend{$k=0$,$k=1$,$k=2$};
      \end{loglogaxis}
    \end{tikzpicture}
  \end{minipage}
  \hspace{0.025\textwidth}
  %% error vs solution time
  \begin{minipage}[b]{0.30\textwidth}
    \ifMMMAS 
    \begin{tikzpicture}[scale=0.55]
    \else
    \begin{tikzpicture}[scale=0.60]
    \fi
      \begin{loglogaxis}
        \addplot table[x=tot_solution_time,y=errXnorm_u]{cv_field_cos/k0_tet_ch.dat};
        \addplot table[x=tot_solution_time,y=errXnorm_u]{cv_field_cos/k1_tet_ch.dat};
        \addplot table[x=tot_solution_time,y=errXnorm_u]{cv_field_cos/k2_tet_ch.dat};
        \addplot[dashed,color=blue,mark=+] table[x=tot_solution_time,y=errXnorm_u] {cv_field_cos/k0_tet_nostab.dat};
        \addplot[dashed,color=red,mark=+] table[x=tot_solution_time,y=errXnorm_u] {cv_field_cos/k1_tet_nostab.dat};
        \addplot[dashed,color=brown,mark=+] table[x=tot_solution_time,y=errXnorm_u] {cv_field_cos/k2_tet_nostab.dat};
      \end{loglogaxis}
    \end{tikzpicture}
  \end{minipage}
  \hspace{0.025\textwidth}
  %% error vs nDOFs
  \begin{minipage}[b]{0.30\textwidth}
    \ifMMMAS 
    \begin{tikzpicture}[scale=0.55]
    \else
    \begin{tikzpicture}[scale=0.60]
    \fi
      \begin{loglogaxis}
        \addplot table[x=n_DOFs,y=errXnorm_u]{cv_field_cos/k0_tet_ch.dat};
        \addplot table[x=n_DOFs,y=errXnorm_u]{cv_field_cos/k1_tet_ch.dat};
        \addplot table[x=n_DOFs,y=errXnorm_u]{cv_field_cos/k2_tet_ch.dat};
        \addplot[dashed,color=blue,mark=+] table[x=n_DOFs,y=errXnorm_u]{cv_field_cos/k0_tet_nostab.dat};
        \addplot[dashed,color=red,mark=+] table[x=n_DOFs,y=errXnorm_u]{cv_field_cos/k1_tet_nostab.dat};
        \addplot[dashed,color=brown,mark=+] table[x=n_DOFs,y=errXnorm_u]{cv_field_cos/k2_tet_nostab.dat};
        \logLogSlopeTriangleNDOFs{0.10}{-0.4}{0.1}{1/3}{black};
        \logLogSlopeTriangleNDOFs{0.10}{-0.4}{0.1}{2/3}{black};
        \logLogSlopeTriangleNDOFs{0.10}{-0.4}{0.1}{1}{black};
      \end{loglogaxis}
    \end{tikzpicture}
  \end{minipage}
  \vspace{0.25cm}\\
  %% L2 error u
  %% error vs h
  \begin{minipage}[b]{0.30\textwidth}
    \ifMMMAS 
    \begin{tikzpicture}[scale=0.55]
    \else
    \begin{tikzpicture}[scale=0.60]
    \fi
      \begin{loglogaxis}    
        \addplot table[x=meshsize,y=errL2_u]{cv_field_cos/k0_tet_ch.dat};
        \addplot table[x=meshsize,y=errL2_u]{cv_field_cos/k1_tet_ch.dat};
        \addplot table[x=meshsize,y=errL2_u]{cv_field_cos/k2_tet_ch.dat};
        \addplot[dashed,color=blue,mark=+] table[x=meshsize,y=errL2_u] {cv_field_cos/k0_tet_nostab.dat};
        \addplot[dashed,color=red,mark=+] table[x=meshsize,y=errL2_u] {cv_field_cos/k1_tet_nostab.dat};
        \addplot[dashed,color=brown,mark=+] table[x=meshsize,y=errL2_u] {cv_field_cos/k2_tet_nostab.dat};    
        \logLogSlopeTriangle{0.90}{0.4}{0.1}{2}{black};
        \logLogSlopeTriangle{0.90}{0.4}{0.1}{3}{black};
        \logLogSlopeTriangle{0.90}{0.4}{0.1}{4}{black};
      \end{loglogaxis}
    \end{tikzpicture}
  \end{minipage}
  \hspace{0.025\textwidth}
  %% error vs solution time
  \begin{minipage}[b]{0.30\textwidth}
    \ifMMMAS 
    \begin{tikzpicture}[scale=0.55]
    \else
    \begin{tikzpicture}[scale=0.60]
    \fi
      \begin{loglogaxis}
        \addplot table[x=tot_solution_time,y=errL2_u]{cv_field_cos/k0_tet_ch.dat};
        \addplot table[x=tot_solution_time,y=errL2_u]{cv_field_cos/k1_tet_ch.dat};
        \addplot table[x=tot_solution_time,y=errL2_u]{cv_field_cos/k2_tet_ch.dat};
        \addplot[dashed,color=blue,mark=+] table[x=tot_solution_time,y=errL2_u] {cv_field_cos/k0_tet_nostab.dat};
        \addplot[dashed,color=red,mark=+] table[x=tot_solution_time,y=errL2_u] {cv_field_cos/k1_tet_nostab.dat};
        \addplot[dashed,color=brown,mark=+] table[x=tot_solution_time,y=errL2_u] {cv_field_cos/k2_tet_nostab.dat};
      \end{loglogaxis}
    \end{tikzpicture}
  \end{minipage}
  \hspace{0.025\textwidth}
  %% error vs nDOFs
  \begin{minipage}[b]{0.30\textwidth}
    \ifMMMAS 
    \begin{tikzpicture}[scale=0.55]
    \else
    \begin{tikzpicture}[scale=0.60]
    \fi
      \begin{loglogaxis}
        \addplot table[x=n_DOFs,y=errL2_u]{cv_field_cos/k0_tet_ch.dat};
        \addplot table[x=n_DOFs,y=errL2_u]{cv_field_cos/k1_tet_ch.dat};
        \addplot table[x=n_DOFs,y=errL2_u]{cv_field_cos/k2_tet_ch.dat};
        \addplot[dashed,color=blue,mark=+] table[x=n_DOFs,y=errL2_u]{cv_field_cos/k0_tet_nostab.dat};
        \addplot[dashed,color=red,mark=+] table[x=n_DOFs,y=errL2_u]{cv_field_cos/k1_tet_nostab.dat};
        \addplot[dashed,color=brown,mark=+] table[x=n_DOFs,y=errL2_u]{cv_field_cos/k2_tet_nostab.dat};
        \logLogSlopeTriangleNDOFs{0.10}{-0.4}{0.1}{2/3}{black};
        \logLogSlopeTriangleNDOFs{0.10}{-0.4}{0.1}{1}{black};
        \logLogSlopeTriangleNDOFs{0.10}{-0.4}{0.1}{4/3}{black};
      \end{loglogaxis}
    \end{tikzpicture}
  \end{minipage}
  \caption{\label{fig:standard.error.tetra.nostab}
    Relative energy-error ${\|\uh-\IntXh\boldsymbol{u}\|_{\boldsymbol{\rm X},h}}/{\|\IntXh\boldsymbol{u}\|_{\boldsymbol{\rm X},h}}$ (\emph{top row}) and
    $L^2$-error ${\|\uTh-\boldsymbol{\pi}_{\boldsymbol{\cal P},h}^{k+1}\boldsymbol{u}\|_{\Omega}}/{\|\boldsymbol{\pi}_{\boldsymbol{\cal P},h}^{k+1}\boldsymbol{u}\|_{\Omega}}$ (\emph{bottom row})
    versus meshsize $h$ (\emph{left column}), solution time (\emph{center column}), and number of DoF (\emph{right column}) on tetrahedral meshes for the test-case of Section~\ref{sec:standard:num}, and comparison (dashed lines) with the case where the stabilization bilinear form $\ch$ is removed.}
\end{figure}

\subsection{Vector potential formulation}\label{sec:general}

\subsubsection{The model}

The (second-order) vector potential formulation of the magnetostatics problem consists, in its generalized form, in finding the magnetic vector potential $\boldsymbol{u}:\Omega\rightarrow\Real^3$ and the Lagrange multiplier $p:\Omega\rightarrow\Real$ such that
\begin{subequations}
  \label{eq:strong}
  \begin{alignat}{2}
    \Curl\!\big(\!\Curl \boldsymbol{u}\big) + \Grad p &= \boldsymbol{f} &\qquad&\text{in $\Omega$},\label{eq:strong:1}
    \\
    \Div \boldsymbol{u} &= 0 &\qquad&\text{in $\Omega$},\label{eq:strong:2}
	\\
    \normal{\times}(\boldsymbol{u}{\times}\normal)  &=  \boldsymbol{0} &\qquad&\text{on $\partial \Omega$},\label{eq:strong:bc.u}
	\\
    p  &=  0 &\qquad&\text{on $\partial \Omega$},\label{eq:strong:bc.p}
  \end{alignat}
\end{subequations}
where the current density $\boldsymbol{f}:\Omega\to\Real^3$ is no longer assumed to be divergence-free, whence the introduction of the Lagrange multiplier term in~\eqref{eq:strong:1}. When $\Div\vec{f}=0$ in $\Omega$, $p=0$ in $\Omega$ and, letting $\vec{b}\defi\Curl\vec{u}$, Problem~\eqref{eq:strong} is then equivalent, in the class of vector potentials satisfying the Coulomb gauge, to the following problem (cf.~Remark~\ref{rem:top}):
\begin{equation} \label{eq:strong.equiv}
  \Curl \boldsymbol{b} = \boldsymbol{f} \quad\text{in $\Omega$},\qquad\Div \boldsymbol{b} = 0 \quad\text{in $\Omega$},\qquad\boldsymbol{b}{\cdot}\normal  =  0 \quad\text{on $\partial \Omega$}.
\end{equation}
The function $\vec{u}$ is then the vector potential associated to the magnetic induction $\vec{b}$.
Assuming that $\boldsymbol{f}\in \boldsymbol L^2(\Omega;\mathbb{R}^3)$, we consider the following equivalent weak formulation of Problem~\eqref{eq:strong}: Find $(\boldsymbol{u},p)\in\Hzcurl\times H^1_0(\Omega)$ such that
\begin{subequations}\label{eq:weak}
	\begin{alignat}{2}\label{eq:weak.1}
		a(\boldsymbol{u},\boldsymbol{v}) + b(\boldsymbol{v},p) & = (\boldsymbol{f},\boldsymbol{v})_\Omega &\qquad&\forall \boldsymbol{v}\in\Hzcurl,
		\\\label{eq:weak.2}
		b(\boldsymbol{u},q) & = 0 &\qquad&\forall q\in H^1_0(\Omega),
	\end{alignat}
\end{subequations}
where the bilinear forms $a:\Hcurl\times\Hcurl\rightarrow\Real$ and $b:\Hcurl\times H^1(\Omega) \rightarrow\Real$ are defined in~\eqref{def:standard.bf.c0}.
Using the decomposition~\eqref{eq:helmholtz}, Problem~\eqref{eq:weak} can be equivalently rewritten under the following fully decoupled form: Find $\vec{u}\in\Hzcurl\cap\Hdivz$ and $p\in H^1_0(\Omega)$ such that
\begin{alignat*}{2}
  a(\vec{u},\vec{\eta})&=(\vec{f},\vec{\eta})_{\Omega}&\qquad&\forall\;\vec{\eta}\in\Hzcurl\cap\Hdivz,\\b(\Grad\xi,p)&=(\vec{f},\Grad\xi)_{\Omega}&\qquad&\forall\;\xi\in H^1_0(\Omega),
\end{alignat*}
whose well-posedness directly follows from the first Weber inequality~\eqref{eq:weber} and from the Lax--Milgram lemma.

\subsubsection{The HHO method} 

We consider the hybrid spaces introduced in Section~\ref{sse:hybrid} (the space $\Xh$ for the magnetic vector potential, and $\Yh$ for the Lagrange multiplier), up to a slight modification of the space $\Xh$ defined in~\eqref{def:DOFs.Xh}.
To this end, we introduce, for any $q\in\mathbb{N}$ and any $F\in\f_h$, the space
\begin{equation}\label{spa}
  {\boldsymbol{{\cal P}}}^{q}_\flat(F)\defi\boldsymbol\Poly^{q-1}(F)\oplus\Grad_\tau\!\big(\widetilde{\Poly}^{q+1}(F)\big),
\end{equation}
with $\widetilde{\Poly}^{q+1}(F)$ denoting the space of homogeneous scalar-valued polynomials of total degree $q+1$ on $F$, and the convention that ${\boldsymbol{{\cal P}}}^{-1}(F)\defi\{\boldsymbol{0}\}$. 
Consistently with our notation so far, we let $\vec{\pi}_{{\boldsymbol{{\cal P}}}_\flat,F}^{q}$ denote the $\boldsymbol L^2(F;\mathbb{R}^2)$-orthogonal projector onto ${\boldsymbol{{\cal P}}}_\flat^{q}(F)$.
With this new space at hand, we define
\begin{equation}\label{def:DOFs.Xh.sharp}
  \Xhs\defi\left\{ \vh = \big((\vT)_{T\in\t_h},(\vF)_{F\in\f_h}\big) \st 
  \begin{alignedat}{2}
    \vT&\in \PkpdT &\quad& \forall T\in\t_h
    \\
    \vF&\in\boldsymbol{{\cal P}}^{k+1}_\flat(F) &\quad& \forall F\in\f_h
  \end{alignedat}
  \right\},
\end{equation}
which is from now on meant to replace the space $\Xh$. The space $\Yh$ keeps the same definition~\eqref{def:DOFs.Yh}. We also introduce the spaces $\Xhzs$ and $\XXhzs$, that are respectively obtained through definitions~\eqref{def:DOFs.Xhz} and~\eqref{def:XXhz}, up to the replacement therein of $\Xh$ by $\Xhs$, and of $\Xhz$ by $\Xhzs$.
In turn, the interpolator $\IntXhs: \boldsymbol H^1(\Omega;\mathbb{R}^3)\rightarrow\Xhs$ is defined as in formula~\eqref{def:Icurl.h}, up to the replacement of the projector $\vec{\pi}_{\boldsymbol{{\cal G}},F}^{k+1}$ by $\vec{\pi}_{\boldsymbol{{\cal P}}_\flat,F}^{k+1}$.
Here, and as opposed to Section~\ref{sse:field} (cf.~Remark~\ref{rem:curl}), because of the fact that we will have to reconstruct a discrete ${\bf curl}$ operator, we need to consider a space for the vectorial face unknowns that contains $\boldsymbol{{\cal P}}^k(F)$ (this is indeed needed to prove optimal approximation properties for the ${\bf curl}$ reconstruction operator). Since, for stability reasons, the space for face unknowns must also contain $\boldsymbol{{\cal G}}^{k+1}(F)$, we consider the sum of these two spaces, which rewrites as the direct sum~\eqref{spa}. Notice that ${\boldsymbol{{\cal P}}}^{k+1}_\flat(F)$ is strictly sandwiched between $\boldsymbol{{\cal P}}^k(F)$ and $\boldsymbol{{\cal P}}^{k+1}(F)$.
\begin{remark}[Validity of the results of Section \ref{sse:dwi}]\label{rem:latitude}
  For all $F\in\f_h$, we have
  $$\boldsymbol{{\cal G}}^{k+1}(F)\subseteq\boldsymbol{{\cal P}}^{k+1}_\flat(F),$$
  and it can be checked that, up to the replacement of the $\boldsymbol{L}^2(F;\mathbb{R}^2)$-orthogonal projector $\vec{\pi}^{k+1}_{\boldsymbol{{\cal G}},F}$ onto $\boldsymbol{{\cal G}}^{k+1}(F)$ by the projector $\vec{\pi}^{k+1}_{\boldsymbol{{\cal P}}_\flat,F}$ onto $\boldsymbol{{\cal P}}^{k+1}_\flat(F)$, all the results in Section~\ref{sse:dwi} remain valid when $\Xh$ is replaced by $\Xhs$ as defined in~\eqref{def:DOFs.Xh.sharp}, including the discrete Weber inequality of Theorem~\ref{thm:max.ineq} and its generalization of Corollary~\ref{cor:max.ineq} (observe, in particular, that the crucial estimates~\eqref{eq:crucial}--\eqref{eq:max.proof:I2b} still hold true under these changes).
\end{remark}
We define the discrete bilinear forms $\ah:\Xhs\times\Xhs\rightarrow\Real$, $\bh:\Xhs\times\Yh\rightarrow\Real$, and $\dh:\Yh\times\Yh\rightarrow\Real$ such that
\begin{subequations}\label{def:general.bf.h}
	\begin{alignat}{2}\label{def:general.bf.ah}
		\ah(\wh,\vh) & \defi \big(\Ckh\wh,\Ckh\vh\big)_\Omega + \sh(\wh,\vh),
		\\\label{def:general.bf.bh}
		\bh(\wh,\qh) & \defi \big(\wTh,\Gkh\qh\big)_\Omega,
		\\\label{def:general.bf.ch}
		\dh(\rh,\qh) & \defi \sum_{T\in\t_h} \sum_{F\in\f_T} h_F \big(\rF-{\rm r}_{T\mid F},\qF-{\rm q}_{T\mid F}\big)_F ,
	\end{alignat}
\end{subequations}
where $\Gkh:\Yh\rightarrow\PkpdTh$ is the gradient reconstruction operator introduced in Section~\ref{sec:grad.recons}, and $\sh:\Xhs\times\Xhs\rightarrow\Real$ is the stabilization bilinear form such that
\begin{equation}\label{def:general.bf.sh}
  \sh(\wh,\vh)\defi \sum_{T\in\t_h}\sum_{F\in\f_T} h_F^{-1}\big(\vec{\pi}_{\boldsymbol{{\cal P}}_\flat,F}^{k+1}\big(\vec{\gamma}_{\tau,F}(\wT)-\wF\big),\vec{\pi}_{\boldsymbol{{\cal P}}_\flat,F}^{k+1}\big(\vec{\gamma}_{\tau,F}(\vT)-\vF\big)\big)_F.
\end{equation}
In~\eqref{def:general.bf.ah}, $\Ckh:\Xhs\rightarrow\boldsymbol{{\cal R}}^k({\t_h})$ (with $\boldsymbol{\cal R}^k({\t_h})$ defined in Section~\ref{sse:hybrid}) is the global discrete ${\bf curl}$ reconstruction operator such that its local restriction $\CkT:\XTs\rightarrow\boldsymbol{{\cal R}}^k(T)$ to any $T\in\t_h$ solves the following well-posed problem: For all $\vTF\in\XTs$,
\begin{equation}\label{def:CTk}
  \big(\CkT\vTF, \vec{w}\big)_T = (\vT,\Curl \vec{w})_T
  + \sum_{F\in\f_T}\big(\vF,\vec{\gamma}_{\tau,F}(\vec{w}{\times}\normal_{TF})\big)_F
  \quad\forall\vec{w}\in\boldsymbol{{\cal R}}^k(T).
\end{equation}
With this definition at hand, one can prove the following commutation property.
\begin{lemma}[Commutation property]\label{lemma:commut.CTkITk}
	For all $\boldsymbol{v}\in \boldsymbol H^1(\Omega;\mathbb{R}^3)$, we have
	\begin{equation}\label{prop:commut.CTkITk}
		\Ckh (\IntXhs \boldsymbol{v}) = \vec{\pi}_{\boldsymbol{{\cal R}},h}^k (\Curl \boldsymbol{v}),
	\end{equation}
        where we remind the reader that $\vec{\pi}_{\boldsymbol{\cal R},h}^k$ is the $\boldsymbol L^2(\Omega;\mathbb{R}^3)$-orthogonal projector onto $\boldsymbol{\cal R}^k({\t_h})$.
\end{lemma}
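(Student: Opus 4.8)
The plan is to reduce the global identity~\eqref{prop:commut.CTkITk} to a local one on each mesh element $T\in\t_h$, since both $\Ckh$ and $\vec{\pi}_{\boldsymbol{\cal R},h}^k$ act elementwise: it suffices to show that, for all $\boldsymbol{v}\in\boldsymbol H^1(\Omega;\mathbb{R}^3)$ and all $T\in\t_h$, one has $\CkT\big((\IntXhs\boldsymbol{v})_T\big)=\vec{\pi}_{\boldsymbol{\cal R},T}^k(\Curl\boldsymbol{v}_{\mid T})$. Both sides belong to $\boldsymbol{{\cal R}}^k(T)$, so it is enough to test against an arbitrary $\vec{w}\in\boldsymbol{{\cal R}}^k(T)$. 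First I would write out the left-hand side using the defining relation~\eqref{def:CTk} with $\vTF=(\IntXhs\boldsymbol{v})_T=\big(\vec{\pi}_{\boldsymbol{\cal P},T}^{k+1}(\boldsymbol{v}_{\mid T}),(\vec{\pi}_{\boldsymbol{{\cal P}}_\flat,F}(\vec{\gamma}_{\tau,F}(\boldsymbol{v})))_{F\in\f_T}\big)$, obtaining
\[
  \big(\CkT((\IntXhs\boldsymbol{v})_T),\vec{w}\big)_T
  = \big(\vec{\pi}_{\boldsymbol{\cal P},T}^{k+1}(\boldsymbol{v}_{\mid T}),\Curl\vec{w}\big)_T
  + \sum_{F\in\f_T}\big(\vec{\pi}_{\boldsymbol{{\cal P}}_\flat,F}^{k+1}(\vec{\gamma}_{\tau,F}(\boldsymbol{v})),\vec{\gamma}_{\tau,F}(\vec{w}{\times}\normal_{TF})\big)_F.
\]

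The key step is then to remove the projectors on the right-hand side. For the volumetric term, since $\vec{w}\in\boldsymbol{{\cal R}}^k(T)={\bf curl}(\boldsymbol\Poly^{k+1}(T))$ we have $\Curl\vec{w}\in\boldsymbol{{\cal R}}^{k-1}(T)\subseteq\boldsymbol\Poly^{k-1}(T)\subseteq\boldsymbol\Poly^{k+1}(T)$, so by definition of the $\boldsymbol L^2(T;\mathbb{R}^3)$-orthogonal projector $\vec{\pi}_{\boldsymbol{\cal P},T}^{k+1}$ one may drop it:\ $\big(\vec{\pi}_{\boldsymbol{\cal P},T}^{k+1}(\boldsymbol{v}_{\mid T}),\Curl\vec{w}\big)_T=(\boldsymbol{v},\Curl\vec{w})_T$. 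For the face terms, the point is that for $\vec{w}\in\boldsymbol{{\cal R}}^k(T)$ the tangential trace $\vec{\gamma}_{\tau,F}(\vec{w}{\times}\normal_{TF})$ lies in the space $\boldsymbol{{\cal P}}^{k+1}_\flat(F)$ onto which $\vec{\pi}_{\boldsymbol{{\cal P}}_\flat,F}^{k+1}$ projects; granting this, self-adjointness of the projector lets us move it onto $\vec{w}$ and then discard it, giving $\big(\vec{\pi}_{\boldsymbol{{\cal P}}_\flat,F}^{k+1}(\vec{\gamma}_{\tau,F}(\boldsymbol{v})),\vec{\gamma}_{\tau,F}(\vec{w}{\times}\normal_{TF})\big)_F=\big(\vec{\gamma}_{\tau,F}(\boldsymbol{v}),\vec{\gamma}_{\tau,F}(\vec{w}{\times}\normal_{TF})\big)_F$. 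Summing over $F\in\f_T$ and reversing the integration-by-parts (Green) formula for the $\Curl$ that underpins~\eqref{def:CTk}, the right-hand side collapses to $(\Curl\boldsymbol{v},\vec{w})_T$. Since $\vec{w}\in\boldsymbol{{\cal R}}^k(T)$ was arbitrary, $(\Curl\boldsymbol{v},\vec{w})_T=\big(\vec{\pi}_{\boldsymbol{\cal R},T}^k(\Curl\boldsymbol{v}),\vec{w}\big)_T$ by definition of $\vec{\pi}_{\boldsymbol{\cal R},T}^k$, which yields the claimed local identity and hence~\eqref{prop:commut.CTkITk}.

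The main obstacle is the face argument: one must verify that $\vec{\gamma}_{\tau,F}(\vec{w}{\times}\normal_{TF})\in\boldsymbol{{\cal P}}^{k+1}_\flat(F)$ whenever $\vec{w}\in\boldsymbol{{\cal R}}^k(T)$, and more fundamentally that the element space $\boldsymbol{{\cal P}}^{k+1}_\flat(F)$ in~\eqref{spa} was tailored precisely so that this inclusion holds --- this is exactly the design constraint alluded to in the text ("we need to consider a space for the vectorial face unknowns that contains $\boldsymbol{{\cal P}}^k(F)$"). Concretely, for $\vec{w}=\Curl\vec{z}$ with $\vec{z}\in\boldsymbol\Poly^{k+1}(T)$ one has $\vec{w}\in\boldsymbol\Poly^k(T)$, so $\vec{w}{\times}\normal_{TF}$ restricted to $F$ is a vector polynomial of degree $k$, whose tangential part lies in $\boldsymbol\Poly^k(F)\subseteq\boldsymbol\Poly^{k-1}(F)\oplus\boldsymbol{{\cal G}}^k(F)$; a short degree count shows this is contained in $\boldsymbol{{\cal P}}^{k+1}_\flat(F)=\boldsymbol\Poly^{k}(F)\oplus\Grad_\tau(\widetilde{\Poly}^{k+2}(F))$ --- indeed $\boldsymbol\Poly^k(F)\subseteq\boldsymbol{{\cal P}}^{k+1}_\flat(F)$ already. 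Everything else is a bookkeeping exercise with the Green formula and the definitions of the $L^2$-orthogonal projectors.
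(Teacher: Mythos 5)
Your proposal is correct and follows essentially the same route as the paper: plug the interpolate into the defining relation \eqref{def:CTk}, drop the volumetric projector because $\Curl\vec{w}\in\boldsymbol\Poly^{k-1}(T)\subset\PkpdT$, drop the face projectors because $\vec{\gamma}_{\tau,F}(\vec{w}{\times}\normal_{TF})\in\PkdF\subset\boldsymbol{{\cal P}}^{k+1}_\flat(F)$, and integrate by parts to recover $(\Curl\boldsymbol{v},\vec{w})_T$. Your explicit verification of the face inclusion (which the paper only asserts) is sound, since $\boldsymbol{{\cal P}}^{k+1}_\flat(F)=\boldsymbol\Poly^{k}(F)\oplus\Grad_\tau\big(\widetilde{\Poly}^{k+2}(F)\big)$ indeed contains $\boldsymbol\Poly^k(F)$ by construction.
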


\begin{proof}
  For $\boldsymbol{v}\in \boldsymbol H^1(\Omega;\mathbb{R}^3)$, let $\vh \defi \IntXhs \boldsymbol{v}$. Then, for any $T\in\t_h$,
  $$\vTF=\left(\vec{\pi}_{\boldsymbol{\cal P},T}^{k+1}(\boldsymbol{v}_{\mid T}),\big(\vec{\pi}_{\boldsymbol{{\cal P}}_\flat,F}^{k+1}\big(\vec{\gamma}_{\tau,F}(\boldsymbol{v})\big)\big)_{F\in\f_T}\right).$$
  Plugging $\vTF$ into~\eqref{def:CTk}, and using that $\Curl \vec{w}\in\boldsymbol\Poly^{k-1}(T)\subset\PkpdT$ and that $\vec{\gamma}_{\tau,F}(\vec{w}{\times}\normal_{TF})\in\PkdF\subset\boldsymbol{{\cal P}}^{k+1}_\flat(F)$ for all $F\in\f_T$ to remove the projectors, one gets, for all $\vec{w}\in\boldsymbol{{\cal R}}^k(T)$,
	\begin{equation}\label{proof:commut.CTkITK:1}
	  \big(\CkT\vTF, \vec{w}\big)_T = (\vec{v},\Curl \vec{w})_T
          + \sum_{F\in\f_T}
            \big(\vec{v}_{\mid F},\vec{w}_{\mid F}{\times}\normal_{TF}\big)_F.
	\end{equation}
	Integrating by parts the right-hand side of \eqref{proof:commut.CTkITK:1}, we readily infer \eqref{prop:commut.CTkITk}.
\end{proof}
\begin{remark}[Variant on $\Ckh$] \label{rem:variant}
  An alternative choice consists in reconstructing the discrete $\Curl$ in $\PkdTh$, which requires to solve larger local problems for $k\geq 1$ (for example, the space $\boldsymbol{{\cal R}}^2(T)$ has dimension $26$, whereas $\boldsymbol\Poly^2(T)$ has dimension $30$). In this case, the commutation property~\eqref{prop:commut.CTkITk} reads $\Ckh(\IntXhs \boldsymbol{v}) = \vec{\pi}_{\boldsymbol{\cal P},h}^{k} (\Curl \boldsymbol{v})$. This is the approach pursued in~\ifMMMAS Refs.~\citen{Nguyen.ea:11} and~\citen{Chen.ea:17}\else\cite{Nguyen.ea:11} and~\cite{Chen.ea:17}\fi.
  % in the HDG literature~\cite{Nguyen.ea:11,Chen.ea:17}.
  The numerical tests we have performed (not reported here) indicate that, interestingly, reconstructing the discrete curl in $\PkdTh$ instead of $\boldsymbol{\cal R}^k({\t_h})$, besides being computationally more expensive, sometimes deteriorates the accuracy of the approximation.
  %% \begin{table}
  %%       \centering
  %% 	\begin{tabular}{|c|c|c|c|}
  %% 	\hline
  %% 	$k$ & dim$\left[\boldsymbol{\cal R}^k(T)\right]$ & dim$\left[\boldsymbol\Poly^k(T)\right]$\\
  %% 	\hline
  %% 	$0$ & $3$ & $3$\\
  %% 	$1$ & $11$ & $12$\\
  %% 	$2$ & $26$ & $30$\\
  %% 	\hline
  %% 	\end{tabular}
  %% 	\caption{\label{table:dim.CkT}Dimensions of the local sets $\boldsymbol{\cal R}^k(T)$ and $\boldsymbol\Poly^k(T)$ for $k\in\{0,1,2\}.$}
  %% \end{table}
\end{remark}
The HHO method for Problem~\eqref{eq:weak} reads: Find $(\uh,\ph)\in\Xhzs\times\Yhz$ such that
\begin{subequations}\label{eq:general.discrete}
	\begin{alignat}{3}\label{eq:general.discrete.1}
		\ah(\uh,\vh) + \bh(\vh,\ph) & = (\boldsymbol{f},\vTh)_\Omega
		&\qquad&\forall \vh\in\Xhzs,
		\\\label{eq:general.discrete.2}
		-\bh(\uh,\qh) +\dh(\ph,\qh)& = 0 &\qquad&\forall \qh\in \Yhz.
	\end{alignat}
\end{subequations}
Notice that, as opposed to $\boldsymbol{u}$ and $p$ in Problem~\eqref{eq:weak}, one cannot efficiently solve for $\uh$ and $\ph$ independently in Problem~\eqref{eq:general.discrete} as the $\Curl$-$\Grad$ orthogonality is lost at the discrete level.
\begin{remark}[Divergence-free current density]
  At the continuous level, remark that $\Div\vec{f}=0$ in $\Omega$ implies $p = 0$ in $\Omega$. At the discrete level, when $\vec{f}$ is divergence-free, one can hence mimick the strategy advocated in Section~\ref{sse:field} and replace in~\eqref{eq:general.discrete.2} the bilinear form $\dh$ by the (consistent) bilinear form $\ch$ given by~\eqref{def:standard.bf.ch}. Doing so improves the stability of the method without deteriorating its convergence properties (cf.~Remark~\ref{rem:stab}). 
\end{remark}

Letting $\Zhs\defi\Xhs\times\Yh$ and $\Zhzs\defi\Xhzs\times\Yhz$, we notice that Problem~\eqref{eq:general.discrete} can be equivalently rewritten: Find $(\uh,\ph)\in\Zhzs$ such that
\begin{equation}\label{eq:general.discrete.equiv}
  \Ah\big((\uh,\ph),(\vh,\qh)\big)=(\boldsymbol{f},\vTh)_\Omega\qquad\forall\,(\vh,\qh)\in\Zhzs,
\end{equation}
where the bilinear form $\Ah:\Zhs\times\Zhs\to\Real$ is defined by
\begin{equation}\label{def:general.Ah}
	\Ah\big((\wh,\rh),(\vh,\qh)\big) \defi \ah(\wh,\vh) + \bh(\vh,\rh) - \bh(\wh,\qh) + \dh(\rh,\qh).
\end{equation}
We also let, in analogy with \eqref{def:ZZhz},
\begin{equation} \label{def:general.ZZhz}
  \begin{split}
    \ZZhzs\defi&\left\{
    (\wh,\rh)\in\Zhzs\,:\,-\bh(\wh,\qh) +\dh(\rh,\qh) = 0 \quad\forall\qh\in\Yhz
    \right\}
    \\
    =&\left\{(\wh,\rh)\in\Zhzs\,:\,\Ah\big((\wh,\rh),(\underline{\vec{0}}_h,\qh)\big) = 0 \quad\forall\qh\in\Yhz\right\}.
  \end{split}
\end{equation}

\subsubsection{Stability analysis}

We equip the spaces $\Xhs$ and $\Yh$ with the seminorms
\begin{subequations}\label{def:norms.general}
  \begin{alignat}{1}\label{def:norms.general:curl}
    \|\wh\|_{\boldsymbol{{\rm X}},\flat,h} &\defi \left(
    \|\Curl_h\!\wTh \|_\Omega^2 + \sh(\wh,\wh)
    \right)^{\nicefrac12},    
    \\\label{def:norms.general:grad}
    \|\rh\|_{{\rm Y},\flat,h} &\defi\left(
    \sum_{T\in\t_h}h_T^2\|\Grad{\rm r}_T\|_T^2 + \dh(\rh,\rh)
    \right)^{\nicefrac12}.
  \end{alignat}
\end{subequations}
One can easily verify that $\|\cdot\|_{{\rm Y},\flat,h}$ defines a norm on $\Yhz$.
We now equip $\Zhs$ with the seminorm
\begin{equation}\label{def:general.norm}
  \|(\wh,\rh)\|_{\mathbb{Z},\flat,h}\defi\left(
  \|\wh\|_{\boldsymbol{{\rm X}},\flat,h}^2+\|\rh\|_{{\rm Y},\flat,h}^2
  \right)^{\nicefrac12}.
\end{equation}
\begin{lemma}[Norm $\|\cdot\|_{\mathbb{Z},\flat,h}$] \label{le:norm.Z.general}
	The map $\|\cdot\|_{\mathbb{Z},\flat,h}$ defines a norm on $\ZZhzs$ defined by~\eqref{def:general.ZZhz}.
\end{lemma}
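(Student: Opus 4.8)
The plan is to mirror the proof of Lemma~\ref{le:norm.Z}. The seminorm property of $\|\cdot\|_{\mathbb{Z},\flat,h}$ being immediate from its definition~\eqref{def:general.norm} together with those of $\|\cdot\|_{\boldsymbol{{\rm X}},\flat,h}$ and $\|\cdot\|_{{\rm Y},\flat,h}$ in~\eqref{def:norms.general}, only definiteness on $\ZZhzs$ needs to be checked: given $(\wh,\rh)\in\ZZhzs$ with $\|(\wh,\rh)\|_{\mathbb{Z},\flat,h}=0$, I want to conclude $(\wh,\rh)=(\underline{\vec{0}}_h,\underline{0}_h)$.

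From $\|(\wh,\rh)\|_{\mathbb{Z},\flat,h}=0$ and~\eqref{def:general.norm} I would first read off $\|\wh\|_{\boldsymbol{{\rm X}},\flat,h}=0$ and $\|\rh\|_{{\rm Y},\flat,h}=0$. Since $\|\cdot\|_{{\rm Y},\flat,h}$ is a norm on $\Yhz$ (as observed right before the statement), the second identity already yields $\rh=\underline{0}_h$. Next I would use the membership $(\wh,\rh)\in\ZZhzs$: by the first characterization in~\eqref{def:general.ZZhz}, $-\bh(\wh,\qh)+\dh(\rh,\qh)=0$ for all $\qh\in\Yhz$; plugging in $\rh=\underline{0}_h$, so that $\dh(\rh,\qh)=0$, and recalling $\bh(\wh,\qh)=(\wTh,\Gkh\qh)_\Omega$ from~\eqref{def:general.bf.bh}, this gives $(\wTh,\Gkh\qh)_\Omega=0$ for all $\qh\in\Yhz$, i.e.~$\wh\in\XXhzs$.

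Finally I would invoke the $\flat$-counterpart of Corollary~\ref{co:norm.X}: by Remark~\ref{rem:latitude}, all the results of Section~\ref{sse:dwi} — in particular the fact that the seminorm~\eqref{def:norms.h:curl} defines a norm on the discrete divergence-free space — remain valid upon replacing $\vec{\pi}^{k+1}_{\boldsymbol{{\cal G}},F}$ by $\vec{\pi}^{k+1}_{\boldsymbol{{\cal P}}_\flat,F}$, which turns~\eqref{def:norms.h:curl} into exactly $\|\cdot\|_{\boldsymbol{{\rm X}},\flat,h}$ of~\eqref{def:norms.general:curl} and $\XXhz$ into $\XXhzs$. Since $\wh\in\XXhzs$ and $\|\wh\|_{\boldsymbol{{\rm X}},\flat,h}=0$, this forces $\wh=\underline{\vec{0}}_h$, which together with $\rh=\underline{0}_h$ concludes the argument. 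There is no genuine obstacle here; the only point requiring a little care is the appeal to Remark~\ref{rem:latitude} to transfer Corollary~\ref{co:norm.X} to the $\flat$-setting (and, as in Lemma~\ref{le:norm.Z}, keeping the seminorm-versus-norm bookkeeping straight).
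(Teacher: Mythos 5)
Your proof is correct and follows essentially the same route as the paper's: obtain $\rh=\underline{0}_h$ from the norm property of $\|\cdot\|_{{\rm Y},\flat,h}$ on $\Yhz$, deduce $\wh\in\XXhzs$ from the defining relation of $\ZZhzs$, and conclude via the $\flat$-version of Corollary~\ref{co:norm.X} supplied by Remark~\ref{rem:latitude}. Your explicit observation that $\|\cdot\|_{\boldsymbol{{\rm X}},\flat,h}$ is precisely the seminorm~\eqref{def:norms.h:curl} with $\vec{\pi}^{k+1}_{\boldsymbol{{\cal G}},F}$ replaced by $\vec{\pi}^{k+1}_{\boldsymbol{{\cal P}}_\flat,F}$ is a welcome clarification of a step the paper leaves implicit.
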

\begin{proof}
  The seminorm property being straightforward, we only need to prove that, for all couples $(\wh,\rh)\in\ZZhzs$, $\|(\wh,\rh)\|_{\mathbb{Z},\flat,h} = 0 $ implies $(\wh,\rh)=(\underline{\boldsymbol{0}}_h,\underline{0}_h)$.
	Let then $(\wh,\rh)\in\ZZhzs$ be such that $\|(\wh,\rh)\|_{\mathbb{Z},\flat,h} = 0 $. We infer that $\|\wh\|_{\boldsymbol{{\rm X}},\flat,h}=0$ and $\|\rh\|_{{\rm Y},\flat,h}=0$.
        Since $\|\cdot\|_{{\rm Y},\flat,h}$ is a norm on $\Yhz$, we directly get from the second relation that $\rh=\underline{0}_h$.
        Now, owing to the definitions~\eqref{def:general.ZZhz} of $\ZZhzs$,~\eqref{def:general.bf.bh} of $\mathrm{b}_h$, and to the fact that $\XXhzs$ is defined as in~\eqref{def:XXhz} with $\Xhz$ replaced by $\Xhzs$, we infer from $(\wh,\rh)\in\ZZhzs$ and $\rh=\underline{0}_h$ that $\wh\in\XXhzs$.
        By Corollary \ref{co:norm.X} and Remark~\ref{rem:latitude}, $\|\cdot\|_{\boldsymbol{{\rm X}},\flat,h}$ defines a norm on $\XXhzs$, hence $\wh=\underline{\vec{0}}_h$, which concludes the proof.
\end{proof}

We now state some preliminary results for the stability analysis of Problem~\eqref{eq:general.discrete}.
\begin{lemma}[Equivalences of seminorms]\label{lem:stab:ah}
  The following holds true:
  \begin{subequations}\label{lem:stab:ah.eq}
    \begin{gather}\label{lem:stab:ah.eq1}
      \|\wh\|_{\boldsymbol{{\rm X}},\flat,h}^2 \lesssim \ah(\wh,\wh) \lesssim \|\wh\|_{\boldsymbol{{\rm X}},\flat,h}^2
      \qquad\forall\wh\in\Xhs,
      \\ \label{lem:stab:ah.eq2}
      \|\rh\|_{{\rm Y},\flat,h}^2\lesssim \sum_{T\in\t_h}h_T^2\|\GkT\rTF\|_T^2+\dh(\rh,\rh) \lesssim \|\rh\|_{{\rm Y},\flat,h}^2
      \qquad\forall\rh\in\Yh.
    \end{gather}
  \end{subequations}
\end{lemma}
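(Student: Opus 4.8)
The strategy is to treat the two double inequalities separately, each being essentially a statement that a stabilized bilinear form is equivalent to the natural (semi)norm on the associated hybrid space. I would first record that, by definition, the quantity controlled by $\ah(\wh,\wh)$ is literally $\|\Curl_h\wTh\|_\Omega^2 + \sh(\wh,\wh)$ — compare~\eqref{def:general.bf.ah} and~\eqref{def:norms.general:curl} — so~\eqref{lem:stab:ah.eq1} reduces to showing that $\|\Ckh\wh\|_\Omega$ is uniformly comparable to $\|\Curl_h\wTh\|_\Omega$ modulo the stabilization $\sh(\wh,\wh)$. The upper bound $\|\Ckh\wh\|_\Omega^2 \lesssim \|\Curl_h\wTh\|_\Omega^2 + \sh(\wh,\wh)$ follows by testing the local definition~\eqref{def:CTk} of $\CkT$ with $\vec{w}=\CkT\vTF\in\boldsymbol{{\cal R}}^k(T)$, integrating by parts to move the curl off $\vT$ (picking up boundary terms involving $\vec{\gamma}_{\tau,F}(\vT)-\vF$), inserting the projector $\vec{\pi}_{\boldsymbol{{\cal P}}_\flat,F}^{k+1}$ for free since $\vec{\gamma}_{\tau,F}(\CkT\vTF{\times}\normal_{TF})\in\boldsymbol{{\cal P}}^k(F)\subseteq\boldsymbol{{\cal P}}_\flat^{k+1}(F)$, and then applying Cauchy--Schwarz together with a discrete trace inequality (cf.~\cite[Lemma~1.32]{Di-Pietro.Droniou:20}) and a discrete inverse inequality to absorb the $h_F^{-1/2}$ weights. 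For the reverse bound one argues conversely: $\|\Curl\vT\|_T = \sup \{(\Curl\vT,\vec{w})_T : \vec{w}\in\boldsymbol{{\cal R}}^k(T),\ \|\vec{w}\|_T=1\}$ — using that $\Curl\big(\boldsymbol\Poly^{k+1}(T)\big)=\boldsymbol{{\cal R}}^k(T)$ so testing against $\boldsymbol{{\cal R}}^k(T)$ captures the full curl — and rewriting $(\Curl\vT,\vec{w})_T$ via~\eqref{def:CTk} as $(\CkT\vTF,\vec{w})_T - \sum_{F\in\f_T}(\vec{\gamma}_{\tau,F}(\vT)-\vF,\dots)_F$, then bounding the boundary contribution by $\sh(\wh,\wh)^{1/2}$ as before; squaring and summing over $T\in\t_h$ gives $\|\Curl_h\wTh\|_\Omega^2 \lesssim \|\Ckh\wh\|_\Omega^2 + \sh(\wh,\wh) \lesssim \ah(\wh,\wh)$.

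For~\eqref{lem:stab:ah.eq2} the situation is analogous with $\GkT$ in place of $\CkT$ and the roles of volumetric and boundary terms interchanged. Unfolding~\eqref{def:norms.general:grad}, what must be shown is that $\sum_{T\in\t_h}h_T^2\|\Grad\rTF\|_T^2$ is uniformly comparable to $\sum_{T\in\t_h}h_T^2\|\GkT\rTF\|_T^2$ modulo $\dh(\rh,\rh)$. Starting from the definition~\eqref{def:GTk} of $\GkT$, an element-by-element integration by parts (as already performed in~\eqref{eq:pre.rob}) yields $(\GkT\rTF,\vec{w})_T = (\Grad\rT,\vec{w})_T + \sum_{F\in\f_T}(\rF-{\rm r}_{T\mid F},\vec{w}_{\mid F}{\cdot}\normal_{TF})_F$ for all $\vec{w}\in\PkpdT$. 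Testing with $\vec{w}=\Grad\rT\in\boldsymbol{{\cal G}}^{k-1}(T)\subseteq\PkpdT$ and applying Cauchy--Schwarz, a discrete trace inequality, and the comparability $h_F\simeq h_T$ gives $h_T^2\|\Grad\rT\|_T^2 \lesssim h_T^2\|\GkT\rTF\|_T^2 + \sum_{F\in\f_T}h_F\|\rF-{\rm r}_{T\mid F}\|_F^2$; summing over $T$ and recognizing the last sum as $\dh(\rh,\rh)$ gives the lower bound $\|\rh\|_{{\rm Y},\flat,h}^2 \lesssim \sum_T h_T^2\|\GkT\rTF\|_T^2 + \dh(\rh,\rh)$. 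Conversely, testing the same identity with $\vec{w}=\GkT\rTF$, Cauchy--Schwarz, trace/inverse inequalities and $h_F\simeq h_T$ yield $h_T^2\|\GkT\rTF\|_T^2 \lesssim h_T^2\|\Grad\rT\|_T^2 + \sum_{F\in\f_T}h_F\|\rF-{\rm r}_{T\mid F}\|_F^2$, hence the upper bound $\sum_T h_T^2\|\GkT\rTF\|_T^2 + \dh(\rh,\rh)\lesssim \|\rh\|_{{\rm Y},\flat,h}^2$.

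The main technical point — the only place where mesh regularity is really used — is the combined deployment of discrete trace and inverse inequalities to convert face norms weighted by $h_F^{\pm 1/2}$ into element norms and back, which relies on the uniform bound on ${\rm card}(\f_T)$ and on $h_F$ being uniformly comparable to $h_T$ (both recalled in Section~\ref{se:disset}). A secondary subtlety to keep in mind is that in~\eqref{lem:stab:ah.eq1} one must check that testing the curl against $\boldsymbol{{\cal R}}^k(T)$ rather than against all of $\PkpdT$ still recovers the full $\|\Curl\vT\|_T$; this is immediate since $\Curl\vT\in\boldsymbol{{\cal R}}^k(T)$ and $\boldsymbol{{\cal R}}^k(T)$ is a subspace on which the $\boldsymbol L^2$-norm is attained by its own elements. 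Everything else is routine bookkeeping with Cauchy--Schwarz and the triangle inequality; no additional ideas beyond those already used in the proofs of Lemmas~\ref{le:GTk.norm} and~\ref{lemma:commut.CTkITk} and in the estimate~\eqref{eq:max.proof:I2b} are required.
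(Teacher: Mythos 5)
Your proposal is correct and follows essentially the same route as the paper: both inequalities in \eqref{lem:stab:ah.eq1} are obtained by testing the definition \eqref{def:CTk} of $\CkT$ with $\vec{w}=\Curl\wT$ and $\vec{w}=\CkT\wTF$ respectively, integrating by parts, inserting the projector $\vec{\pi}^{k+1}_{\boldsymbol{{\cal P}}_\flat,F}$ (legitimate since the relevant traces lie in $\PkdF\subset\boldsymbol{{\cal P}}^{k+1}_\flat(F)$), and concluding with Cauchy--Schwarz and a discrete trace inequality; your treatment of \eqref{lem:stab:ah.eq2} via the integrated-by-parts identity \eqref{eq:pre.rob} tested with $\Grad\rT$ and $\GkT\rTF$ is exactly the argument the paper declares "similar and omitted for brevity." The only cosmetic differences are that you phrase the lower bound as a supremum over $\boldsymbol{{\cal R}}^k(T)$ rather than testing directly with $\Curl\wT$, and you invoke an inverse inequality that is not actually needed (the discrete trace inequality alone suffices); neither affects correctness.
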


\begin{proof}
  Let us prove~\eqref{lem:stab:ah.eq1}. Let $\wh\in\Xhs$, and $T\in\t_h$.
  By the definition~\eqref{def:CTk} of $\CkT$, testing with $\vec{w}=\Curl\wT\in\boldsymbol{{\cal R}}^k(T)$, integrating by parts, and using the fact that $\vec{\gamma}_{\tau,F}\big(\Curl\wT{\times}\normal_{TF}\big)\in\PkdF\subset\boldsymbol{{\cal P}}^{k+1}_\flat(F)$, we infer
  \begin{multline*}
    \|\Curl\wT\|_T^2
    =\big(\CkT\wTF,\Curl\wT\big)_T
    \\+\sum_{F\in\f_T}\big(\vec{\pi}^{k+1}_{\boldsymbol{{\cal P}}_\flat,F}\big(\vec{\gamma}_{\tau,F}(\wT)-\wF\big),\vec{\gamma}_{\tau,F}(\Curl\wT{\times}\normal_{TF})\big)_F.
  \end{multline*}
  By the Cauchy--Schwarz inequality, a discrete trace inequality (see, e.g.,~\cite[Lemma 1.32]{Di-Pietro.Droniou:20}), and recalling the definition~\eqref{def:general.bf.ah} of $\ah$, we get $\|\Curl_h\!\wTh\|_{\Omega}^2\lesssim\ah(\wh,\wh)$,
  and the first inequality in~\eqref{lem:stab:ah.eq1} follows by adding $\sh(\wh,\wh)$ to both sides.
  To prove the second inequality, we test~\eqref{def:CTk} with $\vec{w}=\CkT\wTF\in\boldsymbol{{\cal R}}^k(T)$ to infer
  \begin{multline*}
    \|\CkT\wTF\|_T^2=\big(\Curl\wT,\CkT\wTF\big)_T
    \\-\sum_{F\in\f_T}\big(\vec{\pi}^{k+1}_{\boldsymbol{{\cal P}}_\flat,F}\big(\vec{\gamma}_{\tau,F}(\wT)-\wF\big),\vec{\gamma}_{\tau,F}(\CkT\wTF{\times}\normal_{TF})\big)_F,
  \end{multline*}
  and we conclude by the same arguments.
  The proof of~\eqref{lem:stab:ah.eq2} is similar and is omitted for brevity.
\end{proof}

\begin{lemma}[Control of $\Gkh$]\label{lem:infsupbh}
  For all $(\wh,\rh)\in\Zhs$, there exists $\vhs\in\Xhzs$ satisfying
  \[
  \|\vThs\|_{\Omega}^2+\|\vhs\|_{\boldsymbol{{\rm X}},\flat,h}^2\lesssim\sum_{T\in\t_h}h_T^2\|\GkT\rTF\|_T^2,
  \]
  such that the following holds true:
  \begin{equation}\label{lem:infsupbh.eq}
    \sum_{T\in\t_h}h_T^2\|\GkT\rTF\|_T^2
    \lesssim
    \Ah\big((\wh,\rh),(\vhs,\underline{0}_h)\big)+\|\wh\|_{\boldsymbol{{\rm X}},\flat,h}^2.
    \end{equation}
\end{lemma}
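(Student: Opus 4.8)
The quantity to control is $\sum_{T}h_T^2\|\GkT\rTF\|_T^2$, so the natural idea is to build a test function $\vhs\in\Xhzs$ whose element components "see" $\Gkh\rh$. Concretely, I would try $\vThs \defi -\sum_{T\in\t_h}h_T^2\,\big(\GkT\rTF\big)\mathbb{1}_T$ (patched element by element), i.e. set the element unknowns of $\vhs$ to $-h_T^2\,\GkT\rTF\in\PkpdT$, and take the face unknowns of $\vhs$ to be the natural projection $\vFs\defi\vec{\pi}^{k+1}_{\boldsymbol{{\cal P}}_\flat,F}\big(\vec{\gamma}_{\tau,F}(\vThs_{\mid T})\big)$ reconciled on interfaces. (One must be a bit careful since $\GkT\rTF$ is an element-wise object and its tangential trace on an interface $F$ depends on which side is chosen; the remedy is to average, or to take the face unknown on $F\in\f_T$ to be $-h_F^2\,\vec{\pi}^{k+1}_{\boldsymbol{{\cal P}}_\flat,F}\big(\vec{\gamma}_{\tau,F}(\GkT\rTF)\big)$ and accept that the two sides may differ — the difference is then controlled by $\sh$.) On boundary faces we set $\vFs=\boldsymbol 0$, which is consistent with $\vhs\in\Xhzs$.

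With this choice, the term $\bh(\vhs,\rh)=\big(\vThs,\Gkh\rh\big)_\Omega = -\sum_{T}h_T^2\|\GkT\rTF\|_T^2$ produces exactly (minus) the target quantity; since $\Ah\big((\wh,\rh),(\vhs,\underline 0_h)\big)=\ah(\wh,\vhs)+\bh(\vhs,\rh)$, rearranging gives
\begin{equation*}
  \sum_{T\in\t_h}h_T^2\|\GkT\rTF\|_T^2 = \ah(\wh,\vhs) - \Ah\big((\wh,\rh),(\vhs,\underline 0_h)\big).
\end{equation*}
By the Cauchy--Schwarz inequality and the norm equivalence~\eqref{lem:stab:ah.eq1}, $\ah(\wh,\vhs)\lesssim\|\wh\|_{\boldsymbol{{\rm X}},\flat,h}\,\|\vhs\|_{\boldsymbol{{\rm X}},\flat,h}$, and then Young's inequality absorbs the $\|\vhs\|_{\boldsymbol{{\rm X}},\flat,h}$-factor against the claimed bound $\|\vhs\|_{\boldsymbol{{\rm X}},\flat,h}^2\lesssim\sum_T h_T^2\|\GkT\rTF\|_T^2$, leaving the stated inequality~\eqref{lem:infsupbh.eq} (the leftover $\|\wh\|_{\boldsymbol{{\rm X}},\flat,h}^2$ term being precisely the additive term allowed on the right-hand side).

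**The main work, and the main obstacle.** Everything hinges on proving the two a priori bounds on $\vhs$: $\|\vThs\|_\Omega^2\lesssim\sum_T h_T^2\|\GkT\rTF\|_T^2$ and $\|\vhs\|_{\boldsymbol{{\rm X}},\flat,h}^2\lesssim\sum_T h_T^2\|\GkT\rTF\|_T^2$. The first is immediate from the definition of $\vThs$. For the second, recall $\|\vhs\|_{\boldsymbol{{\rm X}},\flat,h}^2=\|\Curl_h\vThs\|_\Omega^2+\sh(\vhs,\vhs)$. The $\Curl$ term is handled by a discrete inverse inequality on each element: $\|\Curl(h_T^2\,\GkT\rTF)\|_T\lesssim h_T^{-1}\|h_T^2\,\GkT\rTF\|_T = h_T\|\GkT\rTF\|_T$, squaring and summing. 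The stabilization term $\sh(\vhs,\vhs)=\sum_{T}\sum_{F\in\f_T}h_F^{-1}\|\vec{\pi}^{k+1}_{\boldsymbol{{\cal P}}_\flat,F}\big(\vec{\gamma}_{\tau,F}(\vThs_{\mid T})-\vFs\big)\|_F^2$ is where the choice of face unknowns matters: if $\vFs$ is defined so that on each $F\in\f_T$ it agrees with $\vec{\pi}^{k+1}_{\boldsymbol{{\cal P}}_\flat,F}\big(\vec{\gamma}_{\tau,F}(\vThs_{\mid T})\big)$ for that particular $T$, the contribution vanishes; if $\vFs$ is an average, the residual is a difference of tangential traces of element-wise polynomials $h_T^2\,\GkT\rTF$ from the two sides, each bounded by $h_F^{1/2}$ times $h_F^{-1}\|h_F^2\,\GkT\rTF\|_F$... wait, more carefully: by a discrete trace inequality, $\|\vec{\gamma}_{\tau,F}(h_T^2\,\GkT\rTF)\|_F\lesssim h_T^{-1/2}\|h_T^2\,\GkT\rTF\|_T = h_T^{3/2}\|\GkT\rTF\|_T$, so $h_F^{-1}\|\cdot\|_F^2\lesssim h_T^2\|\GkT\rTF\|_T^2$, summing over $F$ and $T$ gives the bound. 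The cleanest route — and the one I expect the authors to take — is simply to define $\vFs$ side-consistently wherever possible and otherwise to note that the stabilization residual is dominated by exactly the element-wise quantities already estimated; the bookkeeping of which trace is used on which interface is the only genuinely delicate point. Once the two bounds on $\vhs$ are in hand, the displayed rearrangement plus Cauchy--Schwarz and Young finish the proof.
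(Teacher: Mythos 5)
Your proposal is correct and follows essentially the same route as the paper: test with $\pm h_T^2\GkT\rTF$ in the element unknowns, bound $\|\vhs\|_{\boldsymbol{{\rm X}},\flat,h}$ and $\|\vThs\|_{\Omega}$ by inverse and discrete trace inequalities, and conclude via Cauchy--Schwarz, the equivalence~\eqref{lem:stab:ah.eq1}, and Young's inequality. The only difference is that the paper simply sets $\vFs=\boldsymbol{0}$ on \emph{all} faces, which makes $\vhs\in\Xhzs$ immediate and reduces the stabilization term to $\sum_{T\in\t_h}\sum_{F\in\f_T}h_F^{-1}\|\vec{\pi}_{\boldsymbol{{\cal P}}_\flat,F}^{k+1}\big(\vec{\gamma}_{\tau,F}(\vTs)\big)\|_F^2$, handled by a single discrete trace inequality --- thereby removing entirely the interface bookkeeping you identify as the delicate point.
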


\begin{proof}
	Let $(\wh,\rh)\in\Zhs$. We define $\vhs\in\Xhzs$ such that
	\begin{equation}\label{proof:infsupbh.1}
            \vTs \defi h_T^2\GkT\rTF\quad\forall T\in\t_h,\qquad\vFs \defi \boldsymbol{0}\quad\forall F\in\f_h.
	\end{equation}
        We immediately verify, since $h_T\leq{\rm diam}(\Omega)$ for all $T\in\t_h$, that the following holds true: $\|\vThs\|_{\Omega}^2\lesssim\sum_{T\in\t_h}h_T^2\|\GkT\rTF\|_T^2$.
	From the definitions~\eqref{proof:infsupbh.1} of $\vhs$,~\eqref{def:general.bf.bh} of $\bh$, and~\eqref{def:general.Ah} of $\Ah$, we infer
	\begin{equation*}
		\sum_{T\in\t_h}h_T^2\|\GkT\rTF\|_T^2 
		=
		\bh(\vhs,\rh)=\Ah\big((\wh,\rh),(\vhs,\underline{0}_h)\big)-\ah(\wh,\vhs).
	\end{equation*}
        The Cauchy--Schwarz inequality followed by the second inequality in~\eqref{lem:stab:ah.eq1} then yields
        \begin{equation}\label{proof:infsupbh.2}
          \sum_{T\in\t_h}h_T^2\|\GkT\rTF\|_T^2 
		\lesssim\Ah\big((\wh,\rh),(\vhs,\underline{0}_h)\big)+\|\wh\|_{\boldsymbol{{\rm X}},\flat,h}\|\vhs\|_{\boldsymbol{{\rm X}},\flat,h}.
        \end{equation}
	Using the definitions~\eqref{proof:infsupbh.1} of $\vhs$ and~\eqref{def:norms.general:curl} of $\|\cdot\|_{\boldsymbol{{\rm X}},\flat,h}$, we get
	\begin{equation} \label{proof:infsupbh.3}
          \begin{split}
	    \|\vhs\|_{\boldsymbol{{\rm X}},\flat,h}^2 &= \sum_{T\in\t_h}\left(
	    \|\Curl\vTs\|_T^2 + \sum_{F\in\f_T}h_F^{-1}\|\vec{\pi}_{\boldsymbol{{\cal P}}_\flat,F}^{k+1}\big(\vec{\gamma}_{\tau,F}(\vTs)\big)\|_F^2
	    \right)
	    \\&\lesssim \sum_{T\in\t_h}h_T^{-2}\|\vTs\|_T^2=\sum_{T\in\t_h}h_T^2\|\GkT\rTF\|_T^2,
	  \end{split}
        \end{equation}
	where we have used the $\boldsymbol L^2(F;\mathbb{R}^2)$-boundedness of $\vec{\pi}_{\boldsymbol{{\cal P}}_\flat,F}^{k+1}$, as well as an inverse inequality together with a discrete trace inequality (see, e.g.,~\cite[Lemmas 1.28 and 1.32]{Di-Pietro.Droniou:20}).
	Starting from~\eqref{proof:infsupbh.2}, and using~\eqref{proof:infsupbh.3} combined with a Young inequality for the last term in the right-hand side eventually yields the expected result~\eqref{lem:infsupbh.eq}.
\end{proof}

We are now in position to show well-posedness for Problem~\eqref{eq:general.discrete}.
\begin{lemma}[Well-posedness]\label{thm:wellposedness}
  For all $\zh\in\Zhs$, there exists $\vhs\in\Xhzs$ satisfying $\|\vThs\|_{\Omega}+\|\vhs\|_{\boldsymbol{{\rm X}},\flat,h}\lesssim\|\zh\|_{\mathbb{Z},\flat,h}$ and such that 
  \begin{equation}\label{thm:wellposedness.eq}
    \Ah\big(\zh,\zh\big)+\Ah\big(\zh,(\vhs,\underline{0}_h)\big)\gtrsim\|\zh\|_{\mathbb{Z},\flat,h}^2.
  \end{equation}
  Hence, Problem~\eqref{eq:general.discrete} is well-posed, and the following a priori bound holds true:
  \begin{equation} \label{eq:apr}
    \|(\uh,\ph)\|_{\mathbb{Z},\flat,h}\lesssim\|\vec{f}\|_{\Omega}.
  \end{equation}
\end{lemma}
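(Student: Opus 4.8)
The plan is to prove the inf-sup-type estimate~\eqref{thm:wellposedness.eq} by combining a coercivity bound obtained by testing $\Ah$ against $\zh$ itself with the control of the reconstructed gradient furnished by Lemma~\ref{lem:infsupbh}, and then to deduce well-posedness (by a square-system argument) and the a priori bound~\eqref{eq:apr} in the usual way.

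First I would fix $\zh=(\wh,\rh)\in\Zhs$ and note that, the off-diagonal terms of $\Ah$ cancelling, $\Ah(\zh,\zh)=\ah(\wh,\wh)+\dh(\rh,\rh)$, whence by the first inequality in~\eqref{lem:stab:ah.eq1}
\[
  \Ah(\zh,\zh)\gtrsim\|\wh\|_{\boldsymbol{{\rm X}},\flat,h}^2+\dh(\rh,\rh).
\]
This controls every contribution to $\|\zh\|_{\mathbb{Z},\flat,h}^2$ save $\sum_{T\in\t_h}h_T^2\|\Grad\rT\|_T^2$. To recover the latter I would invoke Lemma~\ref{lem:infsupbh}, which provides $\vhs\in\Xhzs$ with $\|\vThs\|_{\Omega}^2+\|\vhs\|_{\boldsymbol{{\rm X}},\flat,h}^2\lesssim\sum_{T\in\t_h}h_T^2\|\GkT\rTF\|_T^2$ and $\sum_{T\in\t_h}h_T^2\|\GkT\rTF\|_T^2\lesssim\Ah(\zh,(\vhs,\underline{0}_h))+\|\wh\|_{\boldsymbol{{\rm X}},\flat,h}^2$. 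By the second inequality in~\eqref{lem:stab:ah.eq2}, the first of these bounds already yields $\|\vThs\|_{\Omega}+\|\vhs\|_{\boldsymbol{{\rm X}},\flat,h}\lesssim\|\rh\|_{{\rm Y},\flat,h}\le\|\zh\|_{\mathbb{Z},\flat,h}$, i.e.~the required size estimate for the test function. Forming $\Ah(\zh,\zh)+\eta\,\Ah(\zh,(\vhs,\underline{0}_h))$ with $\eta>0$ fixed small enough to absorb the $\|\wh\|_{\boldsymbol{{\rm X}},\flat,h}^2$ term produced by Lemma~\ref{lem:infsupbh} into the coercive contribution $\gtrsim\|\wh\|_{\boldsymbol{{\rm X}},\flat,h}^2$, and then using the first inequality in~\eqref{lem:stab:ah.eq2} to turn the residual $\dh(\rh,\rh)+\sum_{T\in\t_h}h_T^2\|\GkT\rTF\|_T^2$ into $\gtrsim\|\rh\|_{{\rm Y},\flat,h}^2$, gives~\eqref{thm:wellposedness.eq} after renaming $\eta\vhs$ as $\vhs$ (its size bound is unaffected by the rescaling since $\eta\le 1$).

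Well-posedness then follows because the linear system underlying Problem~\eqref{eq:general.discrete} is square: assuming $\Ah((\uh,\ph),(\vh,\qh))=0$ for all $(\vh,\qh)\in\Zhzs$, the choice $(\vh,\qh)=(\underline{\vec{0}}_h,\qh)$ together with~\eqref{def:general.ZZhz} shows $(\uh,\ph)\in\ZZhzs$, and then taking $(\vh,\qh)=(\uh,\ph)$ and $(\vh,\qh)=(\vhs,\underline{0}_h)$ (the latter lies in $\Zhzs$) in~\eqref{thm:wellposedness.eq} gives $\|(\uh,\ph)\|_{\mathbb{Z},\flat,h}^2\lesssim 0$, whence $(\uh,\ph)=(\underline{\vec{0}}_h,\underline{0}_h)$ by Lemma~\ref{le:norm.Z.general}. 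For the a priori bound, I would apply~\eqref{thm:wellposedness.eq} to $\zh=(\uh,\ph)$ and use~\eqref{eq:general.discrete.equiv} with the admissible test functions $(\uh,\ph)$ and $(\vhs,\underline{0}_h)$ to obtain $\|(\uh,\ph)\|_{\mathbb{Z},\flat,h}^2\lesssim(\vec{f},\uTh)_\Omega+(\vec{f},\vThs)_\Omega\le\|\vec{f}\|_\Omega\big(\|\uTh\|_\Omega+\|\vThs\|_\Omega\big)$; bounding $\|\vThs\|_\Omega\lesssim\|(\uh,\ph)\|_{\mathbb{Z},\flat,h}$ by the size estimate on $\vhs$, and $\|\uTh\|_\Omega\lesssim\|(\uh,\ph)\|_{\mathbb{Z},\flat,h}$ via the generalized discrete Weber inequality of Corollary~\ref{cor:max.ineq} (applicable here by Remark~\ref{rem:latitude}, since $(\uh,\ph)\in\ZZhzs$ satisfies~\eqref{rmk:max.ineq.cond} and the stabilization $\dh$ of~\eqref{def:general.bf.ch} meets~\eqref{eq:approx}, which is a routine consequence of the approximation properties of the $L^2$-orthogonal projectors and a continuous trace inequality), and dividing through, one arrives at~\eqref{eq:apr}.

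The genuinely delicate ingredients — the $\ah$-coercivity on $\Xhs$ and the control of $\Gkh$ — are already packaged in Lemmas~\ref{lem:stab:ah} and~\ref{lem:infsupbh}, so the main point of care at the level of this lemma is the bookkeeping of multiplicative constants when fixing the weight $\eta$ in the linear combination of test functions, together with the two verifications that $(\vhs,\underline{0}_h)\in\Zhzs$ and that $\dh$ fulfils the hypothesis~\eqref{eq:approx} of Corollary~\ref{cor:max.ineq} so that the discrete Weber inequality can be used in the a priori estimate.
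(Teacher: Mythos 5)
Your proposal is correct and follows essentially the same route as the paper: coercivity of $\Ah$ on the diagonal via the first inequality in~\eqref{lem:stab:ah.eq1}, recovery of the $\sum_{T\in\t_h}h_T^2\|\GkT\rTF\|_T^2$ term through Lemma~\ref{lem:infsupbh}, conclusion via the first inequality in~\eqref{lem:stab:ah.eq2}, and the same square-system and Weber-inequality arguments for injectivity and the a priori bound. Your explicit $\eta$-weighting (and rescaling of $\vhs$) merely makes precise the constant bookkeeping that the paper leaves implicit when it ``sums'' \eqref{eq:eqfirst} and \eqref{eq:eqsecond}; this is a presentational refinement, not a different proof.
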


\begin{proof}
  Let $\zh=(\wh,\rh)\in\Zhs$.
	By the first inequality in~\eqref{lem:stab:ah.eq1} and~\eqref{def:general.Ah}, one has
	\begin{equation} \label{eq:eqfirst}
		\|\wh\|_{\boldsymbol{{\rm X}},\flat,h}^2 + \dh(\rh,\rh) \lesssim \ah(\wh,\wh) + \dh(\rh,\rh) =\Ah\big(\zh,\zh\big).
	\end{equation}
	By Lemma~\ref{lem:infsupbh} combined with~\eqref{eq:eqfirst}, one also has
        \begin{equation} \label{eq:eqsecond}
          \sum_{T\in\t_h}h_T^2\|\GkT\rTF\|_T^2
		\lesssim
		\Ah\big(\zh,(\vhs,\underline{0}_h)\big)+\Ah\big(\zh,\zh\big),
        \end{equation}
        for some $\vhs\in\Xhzs$ such that $\|\vThs\|_{\Omega}+\|\vhs\|_{\boldsymbol{{\rm X}},\flat,h}\lesssim\|\rh\|_{{\rm Y},\flat,h}\leq\|\zh\|_{\mathbb{Z},\flat,h}$, where we have used the second inequality in~\eqref{lem:stab:ah.eq2} and the definition~\eqref{def:general.norm} of $\|\cdot\|_{\mathbb{Z},\flat,h}$. Summing~\eqref{eq:eqfirst} and~\eqref{eq:eqsecond}, and using the first inequality in~\eqref{lem:stab:ah.eq2}, we infer~\eqref{thm:wellposedness.eq}.
        
        To prove well-posedness, since the linear system associated to Problem~\eqref{eq:general.discrete} is square, it is sufficient to prove injectivity. Assume that $\Ah\big((\uh,\ph),(\vh,\qh)\big)=0$ for all $(\vh,\qh)\in\Zhzs$. Taking $(\vh,\qh)=(\underline{\vec{0}}_h,\qh)$ and using~\eqref{def:general.ZZhz}, we first infer that $(\uh,\ph)\in\ZZhzs$. Taking $(\vh,\qh)=(\uh,\ph)$ and $(\vh,\qh)=(\vhs,\underline{0}_h)$ and using~\eqref{thm:wellposedness.eq}, we then get
        $$\|(\uh,\ph)\|_{\mathbb{Z},\flat,h}^2\lesssim\Ah\big((\uh,\ph),(\uh,\ph)\big)+\Ah\big((\uh,\ph),(\vhs,\underline{0}_h)\big)=0,$$
        which, by Lemma~\ref{le:norm.Z.general}, eventually yields $(\uh,\ph)=(\underline{\vec{0}}_h,\underline{0}_h)$.
        
        To prove the a priori bound~\eqref{eq:apr}, we take $\zh=(\uh,\ph)$ in~\eqref{thm:wellposedness.eq} and we use~\eqref{eq:general.discrete.equiv}. We get, by the Cauchy--Schwarz inequality,
        $$\|(\uh,\ph)\|_{\mathbb{Z},\flat,h}^2\lesssim(\boldsymbol{f},\uTh)_\Omega+(\boldsymbol{f},\vThs)_\Omega\leq\|\vec{f}\|_{\Omega}\big(\|\uTh\|_{\Omega}+\|\vThs\|_{\Omega}\big).$$
        The conclusion follows from the fact that $\|\vThs\|_{\Omega}\lesssim\|(\uh,\ph)\|_{\mathbb{Z},\flat,h}$, and from the combination of Remark~\ref{rem:latitude} with the generalized discrete Weber inequality~\eqref{rmk:max.ineq.eq} of Corollary~\ref{cor:max.ineq} applied to $(\uh,\ph)$ satisfying~\eqref{rmk:max.ineq.cond} (one can easily check that $\dh$ satisfies~\eqref{eq:approx}) to bound $\|\uTh\|_{\Omega}$.
\end{proof}

\subsubsection{Error analysis}

We recall that $(\vec{u},p)\in\Hzcurl\times H^1_0(\Omega)$ denotes the unique solution to Problem~\eqref{eq:weak}. We assume from now on that $\vec{u}$ possesses the additional regularity $\vec{u}\in \boldsymbol H^1(\Omega;\mathbb{R}^3)$, and we let $\hatuh \defi \IntXhs\boldsymbol{u}\in\Xhzs$ and $\hatph\defi \IntYh p\in\Yhz$.
We define the errors
\begin{equation}\label{def:errors}
	\Xhzs\ni\erruh\defi \uh-\hatuh,
	\qquad
	\Yhz\ni\errph\defi \ph - \hatph,
\end{equation}
where $(\uh,\ph)\in\Xhzs\times\Yhz$ is the unique solution to Problem~\eqref{eq:general.discrete}.
Recalling \eqref{eq:general.discrete.equiv} and \eqref{def:general.Ah}, the errors $(\erruh,\errph)\in\Zhzs$ solve
\begin{equation}\label{eq:discrete:Ah:error}
	\Ah\big((\erruh,\errph),(\vh,\qh)\big) = \lh(\vh) + \mh\big(\qh\big) \qquad \forall\, (\vh,\qh)\in\Zhzs,
\end{equation}
where we have defined the consistency error linear forms
\begin{subequations}\label{def:consistency.errors}
	\begin{alignat}{1}\label{def:consistency.errors.lh}
		\lh(\vh) &\defi (\boldsymbol{f},\vTh)_\Omega - \ah(\hatuh,\vh) - \bh(\vh,\hatph),
		\\\label{def:consistency.errors.mh}
		\mh\big(\qh\big) & \defi \bh(\hatuh,\qh)- \dh(\hatph,\qh).
	\end{alignat}
\end{subequations}

\begin{theorem}[Energy-error estimate]\label{thm:error}
  Assume that
  $$\boldsymbol{u}\in\Hzcurl\cap \boldsymbol H^1(\Omega;\mathbb{R}^3)\cap \boldsymbol H^{k+2}(\t_h;\mathbb{R}^3),\qquad p\in H^1_0(\Omega)\cap H^{k+1}(\t_h).$$
  Then, the following holds true, with $(\erruh,\errph)\in\Zhzs$ defined by~\eqref{def:errors}:
  \begin{equation}\label{thm:error.eq}
    \|(\erruh,\errph)\|_{\mathbb{Z},\flat,h} \lesssim \left[
      \sum_{T\in\t_h} h_T^{2(k+1)}\left(|\boldsymbol{u}|_{\boldsymbol H^{k+2}(T;\mathbb{R}^3)}^2 + |p|_{H^{k+1}(T)}^2\right)
      \right]^{\nicefrac{1}{2}}.
  \end{equation}
\end{theorem}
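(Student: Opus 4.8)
The proof follows the now-standard ``third Strang lemma'' route used for Theorem~\ref{thm:standard.error}: from the well-posedness estimate~\eqref{thm:wellposedness.eq}, combined with the error equation~\eqref{eq:discrete:Ah:error}, we reduce the error bound to estimating the consistency linear forms $\lh$ and $\mh$ on the unit ball of $(\Zhzs,\|\cdot\|_{\mathbb{Z},\flat,h})$. Concretely, applying~\eqref{thm:wellposedness.eq} with $\zh=(\erruh,\errph)$, there is $\vhs\in\Xhzs$ with $\|\vhs\|_{\boldsymbol{{\rm X}},\flat,h}\lesssim\|(\erruh,\errph)\|_{\mathbb{Z},\flat,h}$ such that $\|(\erruh,\errph)\|_{\mathbb{Z},\flat,h}^2\lesssim\Ah\big((\erruh,\errph),(\erruh,\errph)\big)+\Ah\big((\erruh,\errph),(\vhs,\underline{0}_h)\big)$; using~\eqref{eq:discrete:Ah:error} for both arguments, this is $\lesssim\big(\lh(\erruh)+\mh(\errph)\big)+\lh(\vhs)$, so it suffices to bound $|\lh(\vh)|$ and $|\mh(\qh)|$ by the right-hand side of~\eqref{thm:error.eq} times $\|\vh\|_{\boldsymbol{{\rm X}},\flat,h}$ (resp.~$\|\qh\|_{{\rm Y},\flat,h}$) and then absorb $\|(\erruh,\errph)\|_{\mathbb{Z},\flat,h}$ on the left.

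\textbf{Estimate of $\lh$.} Starting from~\eqref{def:consistency.errors.lh} and using the strong form~\eqref{eq:strong:1}, namely $\vec{f}=\Curl(\Curl\vec{u})+\Grad p$, we rewrite $(\vec{f},\vTh)_\Omega$ element-by-element and integrate by parts, so that the ``consistent'' contributions involving $\Curl\vec{u}$ pair against $\CkT\vTF$ (via the definition~\eqref{def:CTk} of $\CkT$, recalling the commutation property~\eqref{prop:commut.CTkITk} which gives $\Ckh\hatuh=\vec{\pi}_{\boldsymbol{{\cal R}},h}^k(\Curl\vec{u})$) and those involving $\Grad p$ pair against $\Gkh\qh$ through $\bh(\vh,\hatph)$ and the commutation property~\eqref{lem:commut.GTkITk.eq}. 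After these manipulations $\lh(\vh)$ reduces to a sum over faces of terms of the form $\big((\vec{\pi}_{\boldsymbol{\cal P},T}^{k+1}\vec{u}-\vec{u})_{\mid F}\times\normal_{TF},\cdot\big)_F$ plus the stabilization term $-\sh(\hatuh,\vh)$, exactly as in the proof of Theorem~\ref{thm:standard.error}; one then invokes Cauchy--Schwarz, the $\boldsymbol L^2(F;\mathbb{R}^2)$-boundedness of $\vec{\pi}_{\boldsymbol{{\cal P}}_\flat,F}^{k+1}$, a discrete trace inequality, and the approximation properties of $\vec{\pi}_{\boldsymbol{\cal P},T}^{k+1}$ (as in~\eqref{eq:hatuh}), together with the approximation properties of $\vec{\pi}_{\boldsymbol{{\cal R}},h}^k$ applied to $\Curl\vec{u}$ (this uses $\vec{u}\in\boldsymbol H^{k+2}(\t_h;\mathbb{R}^3)$, hence $\Curl\vec{u}\in\boldsymbol H^{k+1}(\t_h;\mathbb{R}^3)$), to conclude $|\lh(\vh)|\lesssim\big(\sum_{T}h_T^{2(k+1)}|\vec{u}|_{\boldsymbol H^{k+2}(T;\mathbb{R}^3)}^2\big)^{\nicefrac12}\|\vh\|_{\boldsymbol{{\rm X}},\flat,h}$.

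\textbf{Estimate of $\mh$.} Starting from~\eqref{def:consistency.errors.mh} and performing an element-by-element integration by parts in the definition~\eqref{def:GTk} of $\GkT$ (as in the proof of Theorem~\ref{thm:standard.error}), using $\Div\vec{u}=0$ and $\vec{u}\in\boldsymbol H^1(\Omega;\mathbb{R}^3)$ together with $\qF=0$ on boundary faces, the term $\bh(\hatuh,\qh)$ reduces to $\sum_{T}\sum_{F\in\f_T}\big((\vec{\pi}_{\boldsymbol{\cal P},T}^{k+1}\vec{u}-\vec{u})_{\mid F}\cdot\normal_{TF},\qF-{\rm q}_{T\mid F}\big)_F$; the term $-\dh(\hatph,\qh)$ is estimated directly from~\eqref{def:general.bf.ch} using the approximation properties of $\pi_{{\cal P},T}^k$ and $\pi_{{\cal P},F}^{k+1}$ applied to $p$ and to $p_{\mid F}$, plus a trace inequality, yielding an $O(h_T^{k+1}|p|_{H^{k+1}(T)})$ bound on each face. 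Combining Cauchy--Schwarz, the approximation properties of $\vec{\pi}_{\boldsymbol{\cal P},T}^{k+1}$ on faces, a discrete trace inequality (to control $\|\qF-{\rm q}_{T\mid F}\|_F$ by $\|\qh\|_{{\rm Y},\flat,h}$ via~\eqref{def:norms.general:grad} and Lemma~\ref{lem:stab:ah}), gives $|\mh(\qh)|\lesssim\big[\sum_{T}h_T^{2(k+1)}(|\vec{u}|_{\boldsymbol H^{k+2}(T;\mathbb{R}^3)}^2+|p|_{H^{k+1}(T)}^2)\big]^{\nicefrac12}\|\qh\|_{{\rm Y},\flat,h}$. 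Plugging both bounds into the reduction above and absorbing $\|(\erruh,\errph)\|_{\mathbb{Z},\flat,h}$ yields~\eqref{thm:error.eq}.

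\textbf{Main obstacle.} The only genuinely new point relative to Theorem~\ref{thm:standard.error} is handling the reconstructed $\Curl$ operator $\Ckh$ in the consistency analysis of $\lh$: one must carefully use~\eqref{def:CTk} and the commutation property~\eqref{prop:commut.CTkITk} so that the ``volumetric'' $\Curl\vec{u}$ contributions pair exactly against $\CkT\vTF$ and produce only face-localized projection-error remainders, and one needs the approximation estimate for $\vec{\pi}_{\boldsymbol{{\cal R}},h}^k$ on $\Curl\vec{u}$ — this is where the bookkeeping is most delicate. The rest is a routine adaptation of the arguments already detailed in the proof of Theorem~\ref{thm:standard.error}, with $\|\cdot\|_{\boldsymbol{{\rm X}},h}$, $\vec{\pi}_{\boldsymbol{{\cal G}},F}^{k+1}$, and $\ch$ replaced throughout by $\|\cdot\|_{\boldsymbol{{\rm X}},\flat,h}$, $\vec{\pi}_{\boldsymbol{{\cal P}}_\flat,F}^{k+1}$, and $\dh$.
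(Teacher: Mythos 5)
Your proposal follows the paper's proof essentially step for step: reduction via the stability estimate \eqref{thm:wellposedness.eq} and the error equation \eqref{eq:discrete:Ah:error} to bounding $\lh$ and $\mh$ on the unit ball of $\|\cdot\|_{\mathbb{Z},\flat,h}$; cancellation of the $\Grad p$ contribution against $\bh(\vh,\hatph)$ through the commutation property \eqref{lem:commut.GTkITk.eq}; use of \eqref{def:CTk} and \eqref{prop:commut.CTkITk} to reduce $\lh$ to face-localized projection remainders plus $-\sh(\hatuh,\vh)$; and the treatment of $\mh$ as in Theorem~\ref{thm:standard.error} together with a direct bound on $\dh(\hatph,\qh)$. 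All of this matches the paper.

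The one step you should not wave at is the ``approximation estimate for $\vec{\pi}^k_{\boldsymbol{{\cal R}},h}$ on $\Curl\vec{u}$'', which you justify by noting that $\Curl\vec{u}\in\boldsymbol H^{k+1}(\t_h;\mathbb{R}^3)$. That justification is insufficient as stated: $\boldsymbol{{\cal R}}^k(T)$ is a \emph{strict} subspace of $\boldsymbol\Poly^k(T)$, so the $\boldsymbol L^2(T;\mathbb{R}^3)$-orthogonal projector onto it does not reproduce all polynomials of degree $k$ and therefore enjoys no order-$(k{+}1)$ approximation property on generic $\boldsymbol H^{k+1}$ fields (a nonzero gradient component would simply not be approximated). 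The estimate holds here for a structural reason: since $\boldsymbol{{\cal R}}^k(T)=\Curl\big(\boldsymbol\Poly^{k+1}(T)\big)$, one has
\begin{multline*}
  \|\vec{\pi}^k_{\boldsymbol{{\cal R}},T}(\Curl\vec{u}_{\mid T})-\Curl\vec{u}\|_T
  =\min_{\vec{p}\in\boldsymbol\Poly^{k+1}(T)}\|\Curl\vec{p}-\Curl\vec{u}\|_T\\
  \leq\|\Curl\big(\vec{\pi}^{k+1}_{\boldsymbol{\cal P},T}(\vec{u}_{\mid T})\big)-\Curl\vec{u}\|_T
  \lesssim h_T^{k+1}|\vec{u}|_{\boldsymbol H^{k+2}(T;\mathbb{R}^3)},
\end{multline*}
so the regularity is consumed on $\vec{u}$ itself rather than on $\Curl\vec{u}$ (which is also why the final bound features $|\vec{u}|_{\boldsymbol H^{k+2}(T;\mathbb{R}^3)}$ and not $|\Curl\vec{u}|_{\boldsymbol H^{k+1}(T;\mathbb{R}^3)}$). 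The face version of this bound additionally requires inserting $\Curl\big(\vec{\pi}^{k+1}_{\boldsymbol{\cal P},T}(\vec{u}_{\mid T})\big)$ and applying a discrete trace inequality to the polynomial difference. With this correction, your argument coincides with the paper's.
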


\begin{proof}
  Since $(\erruh,\errph)\in\Zhzs$, by~\eqref{thm:wellposedness.eq} with $\zh=(\erruh,\errph)$ (notice that $(\vhs,\underline{0}_h)\in\Zhzs$ and $\|(\vhs,\underline{0}_h)\|_{\mathbb{Z},\flat,h}=\|\vhs\|_{\boldsymbol{{\rm X}},\flat,h}\lesssim\|\zh\|_{\mathbb{Z},\flat,h}$) and~\eqref{eq:discrete:Ah:error}, we infer
  \begin{equation}\label{proof:error.1}
    \|(\erruh,\errph)\|_{\mathbb{Z},\flat,h}\lesssim\max_{(\vh,\qh)\in\Zhzs,\|(\vh,\qh)\|_{\mathbb{Z},\flat,h}=1}\big(\lh(\vh) + \mh\big(\qh\big)\big).
  \end{equation}
	
	Let us first focus on $\lh(\vh)$ for $\vh\in\Xhzs$. By~\eqref{def:consistency.errors.lh}, the fact that $\vec{f}=\Curl(\Curl\vec{u})+\Grad p$ in $\Omega$, and element-by-element integration by parts, we infer
	\begin{equation}\label{proof:error.2}
          \begin{split}
	    \lh(\vh) &=  \!\!\sum_{T\in\t_h} \!\Bigg[\big(\Curl \boldsymbol{u},\Curl\vT\big)_T  -  \!\!\!\sum_{F\in\f_T} \big(\Curl \boldsymbol{u}_{\mid F}{\times}\normal_{TF},\vT_{\mid F}\big)_F \Bigg]
	    \\
	    &\qquad + \big(\Grad p,\vTh\big)_\Omega
	    -\ah(\hatuh,\vh)
	    -\bh(\vh,\hatph)
	    \\
	    &= \!\!\sum_{T\in\t_h} \!\Bigg(\big(\Curl \boldsymbol{u},\Curl\vT\big)_T
	    \\
	    &\qquad -  \!\!\!\sum_{F\in\f_T} \big(\vec{\gamma}_{\tau,F}\big(\Curl\boldsymbol{u}{\times}\normal_{TF}\big),\vec{\gamma}_{\tau,F}(\vT)-\vF\big)_F \Bigg)-\ah(\hatuh,\vh),
	  \end{split}
        \end{equation}
	where we have used the fact that the tangential component of $\Curl \boldsymbol{u}$ is continuous across interfaces (as a consequence of the fact that $\Curl\vec{u}\in\Hcurl\cap \boldsymbol H^1(\t_h;\mathbb{R}^3)$) along with $\vF=\boldsymbol{0}$ for all $F\in\f_h^{\rm b}$ to insert $\vF$ into the boundary term, together with the fact that $\big(\Grad p,\vTh\big)_\Omega = \bh(\vh,\hatph)$ as a consequence of the commutation property~\eqref{lem:commut.GTkITk.eq}.
	Using the definitions~\eqref{def:general.bf.ah} of $\ah$ and~\eqref{def:CTk} of $\CkT$ for $T\in\t_h$ (testing with $\vec{w}=\CkT\hatuTF\in\boldsymbol{{\cal R}}^k(T)$), and integrating by parts, we have
	\begin{equation}\label{proof:error.3}
          \begin{split}
		\ah(\hatuh,\vh) =  
		\hspace*{-3px}&\sum_{T\in\t_h}\hspace*{-3px}\Bigg[
			\big(\CkT \hatuTF,\Curl \vT\big)_T
		\\
                &-\sum_{F\in\f_T}\hspace*{-1px}\big(\vec{\gamma}_{\tau,F}(\CkT\underline{\widehat{\vec{\rm u}}}_T{\times}\normal_{TF}),\vec{\gamma}_{\tau,F}(\vT)-\vF\big)_F
		\Bigg] + \sh(\hatuh,\vh).
	  \end{split}
        \end{equation}
	Since, by Lemma~\ref{lemma:commut.CTkITk}, $\CkT \hatuTF=\vec{\pi}_{\boldsymbol{{\cal R}},T}^k\big(\Curl\vec{u}_{\mid T}\big)$ for all $T\in\t_h$, a combination of~\eqref{proof:error.2} and~\eqref{proof:error.3} yields (recall that $\vT\in\boldsymbol\Poly^{k+1}(T)$)
	\begin{equation*}
	  \begin{aligned}
		\lh(\vh)  &= \sum_{T\in\t_h} \sum_{F\in\f_T} \big(\vec{\gamma}_{\tau,F}\big((\vec{\pi}^k_{\boldsymbol{{\cal R}},T}(\Curl\vec{u}_{\mid T})-\Curl\vec{u}){\times}\normal_{TF}\big),\vec{\gamma}_{\tau,F}(\vT)-\vF\big)_F
		\\
		& \qquad\qquad\qquad\qquad\qquad - \sh(\hatuh,\vh).
	  \end{aligned}
	\end{equation*}
        Applying the triangle and Cauchy--Schwarz inequalities, we get
        \begin{equation}\label{proof:error.4}
          \begin{aligned}
            |\lh(\vh)|&\leq\sum_{T\in\t_h} \sum_{F\in\f_T}\|\vec{\pi}^k_{\boldsymbol{{\cal R}},T}(\Curl\vec{u}_{\mid T})-\Curl\vec{u}\|_F~\|\vec{\gamma}_{\tau,F}(\vT)-\vF\|_F\\
            & \qquad\qquad+\sh(\hatuh,\hatuh)^{\nicefrac12}\sh(\vh,\vh)^{\nicefrac12}.
          \end{aligned}
	\end{equation}
        Let us focus on $\|\vec{\pi}^k_{\boldsymbol{{\cal R}},T}(\Curl\vec{u}_{\mid T})-\Curl\vec{u}\|_F$ for $F\in\f_T$. Adding/subtracting $\Curl\big(\vec{\pi}^{k+1}_{\boldsymbol{\cal P},T}(\vec{u}_{\mid T})\big)$, using a triangle inequality, a discrete trace inequality (see, e.g.,~\cite[Lemma 1.32]{Di-Pietro.Droniou:20}) on $\vec{\pi}^k_{\boldsymbol{{\cal R}},T}(\Curl\vec{u}_{\mid T})-\Curl\big(\vec{\pi}^{k+1}_{\boldsymbol{\cal P},T}(\vec{u}_{\mid T})\big)$, and the approximation properties of $\vec{\pi}^{k+1}_{\boldsymbol{\cal P},T}$ on mesh faces (see, e.g.,~\cite[Theorem 1.45]{Di-Pietro.Droniou:20}) for $\Curl\big(\vec{\pi}^{k+1}_{\boldsymbol{\cal P},T}(\vec{u}_{\mid T})\big)-\Curl\vec{u}$, we infer
        \begin{multline*}
          h_F^{\nicefrac12}\|\vec{\pi}^k_{\boldsymbol{{\cal R}},T}(\Curl\vec{u}_{\mid T})-\Curl\vec{u}\|_F\lesssim\|\vec{\pi}^k_{\boldsymbol{{\cal R}},T}(\Curl\vec{u}_{\mid T})-\Curl\vec{u}\|_T\\+\|\Curl\vec{u}-\Curl\big(\vec{\pi}^{k+1}_{\boldsymbol{\cal P},T}(\vec{u}_{\mid T})\big)\|_T+h_T^{k+1}|\vec{u}|_{\boldsymbol H^{k+2}(T;\mathbb{R}^3)},
        \end{multline*}
        where we have used yet another triangle inequality to insert $\Curl\boldsymbol{u}$. The second term on the right-hand side is readily estimated using again the approximation properties of $\vec{\pi}^{k+1}_{\boldsymbol{\cal P},T}$. 
        As far as the first term is concerned, we have
        $$\|\vec{\pi}^k_{\boldsymbol{{\cal R}},T}(\Curl\vec{u}_{\mid T})-\Curl\vec{u}\|_T=\min_{\vec{p}\in\boldsymbol\Poly^{k+1}(T)}\|\Curl\vec{p}-\Curl\vec{u}\|_T,$$
        which finally yields
        $$h_F^{\nicefrac12}\|\vec{\pi}^k_{\boldsymbol{{\cal R}},T}(\Curl\vec{u}_{\mid T})-\Curl\vec{u}\|_F\lesssim h_T^{k+1}|\vec{u}|_{\boldsymbol H^{k+2}(T;\mathbb{R}^3)}.$$
        Plugging this last estimate into~\eqref{proof:error.4}, applying a discrete Cauchy--Schwarz inequality, and using~\eqref{eq:max.proof:I2b} as well as~\eqref{eq:hatuh} (with $\vec{\pi}^{k+1}_{\boldsymbol{{\cal P}}_\flat,F}$ instead of $\vec{\pi}^{k+1}_{\boldsymbol{{\cal G}},F}$), we infer
        \begin{equation} \label{eq:lh.gen}
          |\lh(\vh)|\lesssim\left(\sum_{T\in\t_h}h_T^{2(k+1)}|\vec{u}|^2_{\boldsymbol H^{k+2}(T;\mathbb{R}^3)}\right)^{\nicefrac12}\,\|\vh\|_{\boldsymbol{\rm X},\flat,h}.
        \end{equation}
	
	Let us now focus on $\mh\big(\qh\big)$ for $\qh\in\Yhz$.
        Recalling~\eqref{eq:bh.hatuh} and \eqref{def:standard.consistency.errors.mh}, one can readily infer
        \begin{equation} \label{eq.bh.hatuh.gen}
          |\bh(\hatuh,\qh)|
          \lesssim\left(\sum_{T\in\t_h}h_T^{2(k+1)}|\vec{u}|^2_{\boldsymbol H^{k+2}(T;\mathbb{R}^3)}\right)^{\nicefrac12}\,\dh(\qh,\qh)^{\nicefrac12}.
        \end{equation}
        Now, applying the Cauchy--Schwarz inequality, recalling the definition~\eqref{def:general.bf.ch} of $\dh$, noticing that $\pi_{{\cal P},T}^{k}(p_{\mid T})_{\mid F}=\pi_{{\cal P},F}^{k+1}\big(\pi_{{\cal P},T}^{k}(p_{\mid T})_{\mid F}\big)$ for all $T\in\t_h$ and $F\in\f_T$, and using the $L^2(F)$-boundedness of $\pi^{k+1}_{{\cal P},F}$, we infer
        \begin{align*}
          |\dh(\hatph,\qh)|
          &\leq\dh(\hatph,\hatph)^{\nicefrac12}\,\dh(\qh,\qh)^{\nicefrac12}
          \\
          &\leq\left(\sum_{T\in\t_h}\sum_{F\in\f_T}h_F\|p-\pi_{{\cal P},T}^k(p_{\mid T})\|_F^2\right)^{\nicefrac12}\,\dh(\qh,\qh)^{\nicefrac12}.
        \end{align*}
        By the approximation properties of $\pi^k_{{\cal P},T}$ on mesh faces (see, e.g.,~\cite[Theorem 1.45]{Di-Pietro.Droniou:20}), we get
        \begin{equation}\label{ch.hatph.gen}
          |\dh(\hatph,\qh)|
          \lesssim\left(\sum_{T\in\t_h}h_T^{2(k+1)}|p|^2_{H^{k+1}(T)}\right)^{\nicefrac12}\,\dh(\qh,\qh)^{\nicefrac12}.
        \end{equation}
        Gathering~\eqref{eq.bh.hatuh.gen}--\eqref{ch.hatph.gen}, and recalling the definition~\eqref{def:consistency.errors.mh} of $\mh\big(\qh\big)$, finally yields
        \begin{equation}\label{eq:mh.gen}
          |\mh\big(\qh\big)|
          \lesssim\left(\sum_{T\in\t_h} h_T^{2(k+1)}\left(|\boldsymbol{u}|_{\boldsymbol H^{k+2}(T;\mathbb{R}^3)}^2 + |p|_{H^{k+1}(T)}^2\right)\right)^{\nicefrac{1}{2}}\|\qh\|_{{\rm Y},\flat,h}.
        \end{equation}
        Plugging~\eqref{eq:lh.gen} and~\eqref{eq:mh.gen} into~\eqref{proof:error.1} for $(\vh,\qh)$ such that $\|(\vh,\qh)\|_{\mathbb{Z},\flat,h}=1$ finally yields~\eqref{thm:error.eq}.
\end{proof}

\begin{remark}[The tetrahedral case]\label{rk:tet.case}
  On matching tetrahedral meshes, according to Lemma~\ref{le:GTk.norm}, $\|\Gkh\cdot\|_{\Omega}$ defines a norm on $\Yhz$. Hence, in this case, and as already pointed out in~\ifMMMAS Ref.~\citen{Chen.ea:17}\else\cite{Chen.ea:17}\fi, one can consider a modified version of Problem~\eqref{eq:general.discrete} in which $\dh$ is removed and for which stability (hence well-posedness) is preserved. This is a direct consequence of Lemma~\ref{lem:infsupbh}, which essentially states an inf-sup condition for $\bh$ provided $\|\Gkh\cdot\|_{\Omega}$ is a norm on $\Yhz$. Removing $\dh$, $\uh\in\XXhzs$ holds true and, as a by-product of the commutation property~\eqref{lem:commut.GTkITk.eq}, $\uTh\in\Hdivz$. Furthermore, a close inspection of the proof of Theorem~\ref{thm:error} shows that, in this case, one ends up with an energy-error estimate that is free of Lagrange multiplier contribution. This allows one to reproduce at the discrete level the following structure of Problem~\eqref{eq:strong}. When the current density $\vec{f}$ is given by the gradient of some function $\psi\in H^1_0(\Omega)$, then $\vec{u}=\vec{0}$ and $p=\psi$. At the discrete level, when $\vec{f}=\Grad\psi$, one then gets $\uh=\underline{\vec{0}}_h$ and $\ph=\IntYh\psi$ (note that this can be observed in practice up to machine precision only if the computation of the right-hand side is also performed up to machine precision).
\end{remark}

\subsubsection{Numerical results}\label{sec:num}

Let the domain $\Omega$ be the unit cube $(0,1)^3$. We consider Problem~\eqref{eq:strong} with exact solution
$$
\boldsymbol{u}(x_1,x_2,x_3) \defi \left(
\begin{tabular}{l}
$\sin(\pi x_2)\sin(\pi x_3)$
\\
$\sin(\pi x_1)\sin(\pi x_3)$
\\
$\sin(\pi x_1)\sin(\pi x_2)$
\end{tabular}
\right), 
\;
p(x_1,x_2,x_3) \defi \sin(\pi x_1)\sin(\pi x_2)\sin(\pi x_3).
$$
Clearly, the magnetic vector potential $\boldsymbol{u}$ and the Lagrange multiplier $p$ satisfy~\eqref{eq:strong:2},~\eqref{eq:strong:bc.u}, and~\eqref{eq:strong:bc.p}.
The current density $\boldsymbol{f}$ is set according to~\eqref{eq:strong:1}.

As in Section~\ref{sec:standard:num}, we solve the discrete Problem~\eqref{eq:general.discrete} on two refined mesh sequences, of respectively cubic and regular tetrahedral meshes. 
For each problem, the element unknowns for both the magnetic vector potential and the Lagrange multiplier are locally eliminated using a Schur complement technique. This step is fully parallelizable. The resulting (condensed) global linear system is solved using the SparseLU direct solver of the Eigen library on the same architecture as in Section \ref{sec:standard:num}.
For $k\in\{0,1,2\}$, we depict on Figures~\ref{fig:general.error.cubic} and~\ref{fig:general.error.tetra_nodh}, respectively for the cubic and (regular) tetrahedral mesh families, the relative energy-error $\|\uh-\IntXhs\boldsymbol{u}\|_{\boldsymbol{\rm X},\flat,h}/\|\IntXhs\boldsymbol{u}\|_{\boldsymbol{\rm X},\flat,h}$ (top row), $L^2$-error $\|\uTh-\boldsymbol{\pi}_{\boldsymbol{\cal P},h}^{k+1}\boldsymbol{u}\|_{\Omega}/\|\boldsymbol{\pi}_{\boldsymbol{\cal P},h}^{k+1}\boldsymbol{u}\|_{\Omega}$ (middle row), and $L^2$-like-error $\|\ph -\IntYh p\|_{{\rm Y},\flat,h}/\|\IntYh p\|_{{\rm Y},\flat,h}$ (bottom row) as functions of
\begin{inparaenum}[(i)]
	\item the meshsize (left column),
	\item the solution time in seconds, i.e.~the time needed to solve the (condensed) global linear system (center column), and
	\item the number of (interface) DoF (right column).
\end{inparaenum}
For the two mesh families, we obtain, as predicted by Theorem~\ref{thm:error}, a convergence rate of the energy-error on the magnetic vector potential and of the $L^2$-like-error on the Lagrange multiplier of order $k+1$. We also observe a convergence rate of order $k+2$ for the $L^2$-error on the magnetic vector potential.
Following Remark~\ref{rk:tet.case}, we also solve on the (matching) tetrahedral mesh family a modified version of Problem~\eqref{eq:general.discrete} in which $\dh$ is removed, and we display on Figure~\ref{fig:general.error.tetra_nodh} the results in dashed lines. We plot the relative energy-error $\|\uh-\IntXhs\boldsymbol{u}\|_{\boldsymbol{\rm X},\flat,h}/\|\IntXhs\boldsymbol{u}\|_{\boldsymbol{\rm X},\flat,h}$ (top row), the $L^2$-error $\|\uTh-\boldsymbol{\pi}_{\boldsymbol{\cal P},h}^{k+1}\boldsymbol{u}\|_{\Omega}/\|\boldsymbol{\pi}_{\boldsymbol{\cal P},h}^{k+1}\boldsymbol{u}\|_{\Omega}$ (middle row), and since we remove the contribution $\dh$, we replace the $L^2$-like-error measure $\|\ph -\IntYh p\|_{{\rm Y},\flat,h}$ (bottom row) by the measure
%% $$\|\ph -\IntYh p\|_{\boldsymbol{G},h} \defi 
  $$\left(\sum_{T\in\t_h}h_T^2\|\GkT\big(\pTF - \underline{\rm I}_{{\rm Y},T}^{k+1}(p_{\mid T})\big)\|_T^2\right)^{\nicefrac12},$$
which remains meaningful for $k=0$.  
Also in this case, we obtain the predicted energy-error on the magnetic vector potential and $L^2$-like-error on the Lagrange multiplier of order $k+1$, and we observe a convergence rate of order $k+2$ for the $L^2$-error on the magnetic vector potential.
The numerical tests show that removing $\dh$ leads to a slightly more accurate approximation of the magnetic vector potential (and a slightly less accurate approximation of the Lagrange multiplier, but this latter result is not meaningful since two different error measures are used).

%% hexahedral mesh
\begin{figure}\centering
  \ref{legend:hexa:2}
  \vspace{0.50cm}\\
  %% energy error u
  %% error vs h
  \begin{minipage}[b]{0.30\textwidth}
    \ifMMMAS 
    \begin{tikzpicture}[scale=0.55]
    \else
    \begin{tikzpicture}[scale=0.60]
    \fi
      \begin{loglogaxis}[legend columns=-1, legend to name=legend:hexa:2]
        \addplot table[x=meshsize,y=errXnorm_u] {cv_potential/k0_hex.dat};
        \addplot table[x=meshsize,y=errXnorm_u] {cv_potential/k1_hex.dat};
        \addplot table[x=meshsize,y=errXnorm_u] {cv_potential/k2_hex.dat};
        \logLogSlopeTriangle{0.90}{0.4}{0.1}{1}{black};
        \logLogSlopeTriangle{0.90}{0.4}{0.1}{2}{black};
        \logLogSlopeTriangle{0.90}{0.4}{0.1}{3}{black};
        \legend{$k=0$,$k=1$,$k=2$};
      \end{loglogaxis}
    \end{tikzpicture}
  \end{minipage}
  \hspace{0.025\textwidth}
  %% error vs solution time
  \begin{minipage}[b]{0.30\textwidth}
    \ifMMMAS 
    \begin{tikzpicture}[scale=0.55]
    \else
    \begin{tikzpicture}[scale=0.60]
    \fi
      \begin{loglogaxis}
        \addplot table[x=tot_solution_time,y=errXnorm_u] {cv_potential/k0_hex.dat};
        \addplot table[x=tot_solution_time,y=errXnorm_u] {cv_potential/k1_hex.dat};
        \addplot table[x=tot_solution_time,y=errXnorm_u] {cv_potential/k2_hex.dat};
      \end{loglogaxis}
    \end{tikzpicture}
  \end{minipage}
  \hspace{0.025\textwidth}
  %% error vs nDOFs
  \begin{minipage}[b]{0.30\textwidth}
    \ifMMMAS 
    \begin{tikzpicture}[scale=0.55]
    \else
    \begin{tikzpicture}[scale=0.60]
    \fi
      \begin{loglogaxis}
        \addplot table[x=n_DOFs,y=errXnorm_u]{cv_potential/k0_hex.dat};
        \addplot table[x=n_DOFs,y=errXnorm_u]{cv_potential/k1_hex.dat};
        \addplot table[x=n_DOFs,y=errXnorm_u]{cv_potential/k2_hex.dat};
        \logLogSlopeTriangleNDOFs{0.10}{-0.4}{0.1}{1/3}{black};
        \logLogSlopeTriangleNDOFs{0.10}{-0.4}{0.1}{2/3}{black};
        \logLogSlopeTriangleNDOFs{0.10}{-0.4}{0.1}{1}{black};
      \end{loglogaxis}
    \end{tikzpicture}
  \end{minipage}
  \vspace{0.25cm}\\
  %% L2 error u
  %% error vs h
  \begin{minipage}[b]{0.30\textwidth}
    \ifMMMAS 
    \begin{tikzpicture}[scale=0.55]
    \else
    \begin{tikzpicture}[scale=0.60]
    \fi
      \begin{loglogaxis}
        \addplot table[x=meshsize,y=errL2_u] {cv_potential/k0_hex.dat};
        \addplot table[x=meshsize,y=errL2_u] {cv_potential/k1_hex.dat};
        \addplot table[x=meshsize,y=errL2_u] {cv_potential/k2_hex.dat};
        \logLogSlopeTriangle{0.90}{0.4}{0.1}{2}{black};
        \logLogSlopeTriangle{0.90}{0.4}{0.1}{3}{black};
        \logLogSlopeTriangle{0.90}{0.4}{0.1}{4}{black};
      \end{loglogaxis}
    \end{tikzpicture}
  \end{minipage}
  \hspace{0.025\textwidth}
  %% error vs solution time
  \begin{minipage}[b]{0.30\textwidth}
    \ifMMMAS 
    \begin{tikzpicture}[scale=0.55]
    \else
    \begin{tikzpicture}[scale=0.60]
    \fi
      \begin{loglogaxis}
        \addplot table[x=tot_solution_time,y=errL2_u] {cv_potential/k0_hex.dat};
        \addplot table[x=tot_solution_time,y=errL2_u] {cv_potential/k1_hex.dat};
        \addplot table[x=tot_solution_time,y=errL2_u] {cv_potential/k2_hex.dat};
      \end{loglogaxis}
    \end{tikzpicture}
  \end{minipage}
  \hspace{0.025\textwidth}
  %% error vs nDOFs
  \begin{minipage}[b]{0.30\textwidth}
    \ifMMMAS 
    \begin{tikzpicture}[scale=0.55]
    \else
    \begin{tikzpicture}[scale=0.60]
    \fi
      \begin{loglogaxis}
        \addplot table[x=n_DOFs,y=errL2_u]{cv_potential/k0_hex.dat};
        \addplot table[x=n_DOFs,y=errL2_u]{cv_potential/k1_hex.dat};
        \addplot table[x=n_DOFs,y=errL2_u]{cv_potential/k2_hex.dat};
        \logLogSlopeTriangleNDOFs{0.10}{-0.4}{0.1}{2/3}{black};
        \logLogSlopeTriangleNDOFs{0.10}{-0.4}{0.1}{1}{black};
        \logLogSlopeTriangleNDOFs{0.10}{-0.4}{0.1}{4/3}{black};
      \end{loglogaxis}
    \end{tikzpicture}
  \end{minipage}
  \vspace{0.25cm}\\
  %% L2 error p
  %% error vs h
  \begin{minipage}[b]{0.30\textwidth}
    \ifMMMAS 
    \begin{tikzpicture}[scale=0.55]
    \else
    \begin{tikzpicture}[scale=0.60]
    \fi
      \begin{loglogaxis}
        \addplot table[x=meshsize,y=errYnorm_p] {cv_potential/k0_hex.dat};
        \addplot table[x=meshsize,y=errYnorm_p] {cv_potential/k1_hex.dat};
        \addplot table[x=meshsize,y=errYnorm_p] {cv_potential/k2_hex.dat};
        \logLogSlopeTriangle{0.90}{0.4}{0.1}{1}{black};
        \logLogSlopeTriangle{0.90}{0.4}{0.1}{2}{black};
        \logLogSlopeTriangle{0.90}{0.4}{0.1}{3}{black};
      \end{loglogaxis}
    \end{tikzpicture}
  \end{minipage}
  \hspace{0.025\textwidth}
  %% error vs solution time
  \begin{minipage}[b]{0.30\textwidth}
    \ifMMMAS 
    \begin{tikzpicture}[scale=0.55]
    \else
    \begin{tikzpicture}[scale=0.60]
    \fi
      \begin{loglogaxis}
        \addplot table[x=tot_solution_time,y=errYnorm_p] {cv_potential/k0_hex.dat};
        \addplot table[x=tot_solution_time,y=errYnorm_p] {cv_potential/k1_hex.dat};
        \addplot table[x=tot_solution_time,y=errYnorm_p] {cv_potential/k2_hex.dat};
      \end{loglogaxis}
    \end{tikzpicture}
  \end{minipage}
  \hspace{0.025\textwidth}
  %% error vs nDOFs
  \begin{minipage}[b]{0.30\textwidth}
    \ifMMMAS 
    \begin{tikzpicture}[scale=0.55]
    \else
    \begin{tikzpicture}[scale=0.60]
    \fi
      \begin{loglogaxis}
        \addplot table[x=n_DOFs,y=errYnorm_p]{cv_potential/k0_hex.dat};
        \addplot table[x=n_DOFs,y=errYnorm_p]{cv_potential/k1_hex.dat};
        \addplot table[x=n_DOFs,y=errYnorm_p]{cv_potential/k2_hex.dat};
        \logLogSlopeTriangleNDOFs{0.10}{-0.4}{0.1}{1/3}{black};
        \logLogSlopeTriangleNDOFs{0.10}{-0.4}{0.1}{2/3}{black};
        \logLogSlopeTriangleNDOFs{0.10}{-0.4}{0.1}{1}{black};
      \end{loglogaxis}
    \end{tikzpicture}
  \end{minipage}
  \caption{\label{fig:general.error.cubic}
    Relative energy-error ${\|\uh-\IntXhs\boldsymbol{u}\|_{\boldsymbol{\rm X},\flat,h}}/{\|\IntXhs\boldsymbol{u}\|_{\boldsymbol{\rm X},\flat,h}}$ (\emph{top row}),
    $L^2$-error ${\|\uTh-\boldsymbol{\pi}_{\boldsymbol{\cal P},h}^{k+1}\boldsymbol{u}\|_{\Omega}}/{\|\boldsymbol{\pi}_{\boldsymbol{\cal P},h}^{k+1}\boldsymbol{u}\|_{\Omega}}$ (\emph{middle row}),
    and $L^2$-like-error ${\|\ph -\IntYh p\|_{{\rm Y},\flat,h}}/$ ${\|\IntYh p\|_{{\rm Y},\flat,h}}$ (\emph{bottom row})
    versus meshsize $h$ (\emph{left column}), solution time (\emph{center column}), and number of DoF (\emph{right column}) on cubic meshes for the test-case of Section~\ref{sec:num}.}
\end{figure}

%% tetrahedral mesh dh vs nostab
\begin{figure}\centering
  \ref{legend:tria:2nostab}
  \vspace{0.50cm}\\
  %% energy error u
  %% error vs h
  \begin{minipage}[b]{0.30\textwidth}
    \ifMMMAS 
    \begin{tikzpicture}[scale=0.55]
    \else
    \begin{tikzpicture}[scale=0.60]
    \fi
      \begin{loglogaxis}[legend columns=-1, legend to name=legend:tria:2nostab]
        \addplot table[x=meshsize,y=errXnorm_u]{cv_potential/k0_tet_dh.dat};
        \addplot table[x=meshsize,y=errXnorm_u]{cv_potential/k1_tet_dh.dat};
        \addplot table[x=meshsize,y=errXnorm_u]{cv_potential/k2_tet_dh.dat};
        \addplot[dashed,color=blue,mark=+] table[x=meshsize,y=errXnorm_u] {cv_potential/k0_tet_nostab.dat};
        \addplot[dashed,color=red,mark=+] table[x=meshsize,y=errXnorm_u] {cv_potential/k1_tet_nostab.dat};
        \addplot[dashed,color=brown,mark=+] table[x=meshsize,y=errXnorm_u] {cv_potential/k2_tet_nostab.dat};
        \logLogSlopeTriangle{0.90}{0.4}{0.1}{1}{black};
        \logLogSlopeTriangle{0.90}{0.4}{0.1}{2}{black};
        \logLogSlopeTriangle{0.90}{0.4}{0.1}{3}{black};
        \legend{$k=0$,$k=1$,$k=2$};
      \end{loglogaxis}
    \end{tikzpicture}
  \end{minipage}
  \hspace{0.025\textwidth}
  %% error vs solution time
  \begin{minipage}[b]{0.30\textwidth}
    \ifMMMAS 
    \begin{tikzpicture}[scale=0.55]
    \else
    \begin{tikzpicture}[scale=0.60]
    \fi
      \begin{loglogaxis}   
        \addplot table[x=tot_solution_time,y=errXnorm_u]{cv_potential/k0_tet_dh.dat};
        \addplot table[x=tot_solution_time,y=errXnorm_u]{cv_potential/k1_tet_dh.dat};
        \addplot table[x=tot_solution_time,y=errXnorm_u]{cv_potential/k2_tet_dh.dat};
        \addplot[dashed,color=blue,mark=+] table[x=tot_solution_time,y=errXnorm_u] {cv_potential/k0_tet_nostab.dat};
        \addplot[dashed,color=red,mark=+] table[x=tot_solution_time,y=errXnorm_u] {cv_potential/k1_tet_nostab.dat};
        \addplot[dashed,color=brown,mark=+] table[x=tot_solution_time,y=errXnorm_u] {cv_potential/k2_tet_nostab.dat}; 
      \end{loglogaxis}
    \end{tikzpicture}
  \end{minipage}
  \hspace{0.025\textwidth}
  %% error vs nDOFs
  \begin{minipage}[b]{0.30\textwidth}
    \ifMMMAS 
    \begin{tikzpicture}[scale=0.55]
    \else
    \begin{tikzpicture}[scale=0.60]
    \fi
      \begin{loglogaxis}
        \addplot table[x=n_DOFs,y=errXnorm_u]{cv_potential/k0_tet_dh.dat};
        \addplot table[x=n_DOFs,y=errXnorm_u]{cv_potential/k1_tet_dh.dat};
        \addplot table[x=n_DOFs,y=errXnorm_u]{cv_potential/k2_tet_dh.dat};
        \addplot[dashed,color=blue,mark=+] table[x=n_DOFs,y=errXnorm_u]{cv_potential/k0_tet_nostab.dat};
        \addplot[dashed,color=red,mark=+] table[x=n_DOFs,y=errXnorm_u]{cv_potential/k1_tet_nostab.dat};
        \addplot[dashed,color=brown,mark=+] table[x=n_DOFs,y=errXnorm_u]{cv_potential/k2_tet_nostab.dat};
        \logLogSlopeTriangleNDOFs{0.10}{-0.4}{0.1}{1/3}{black};
        \logLogSlopeTriangleNDOFs{0.10}{-0.4}{0.1}{2/3}{black};
        \logLogSlopeTriangleNDOFs{0.10}{-0.4}{0.1}{1}{black};
      \end{loglogaxis}
    \end{tikzpicture}
  \end{minipage}
  \vspace{0.25cm}\\
  %% L2 error u
  %% error vs h
  \begin{minipage}[b]{0.30\textwidth}
    \ifMMMAS 
    \begin{tikzpicture}[scale=0.55]
    \else
    \begin{tikzpicture}[scale=0.60]
    \fi
      \begin{loglogaxis}
        \addplot table[x=meshsize,y=errL2_u]{cv_potential/k0_tet_dh.dat};
        \addplot table[x=meshsize,y=errL2_u]{cv_potential/k1_tet_dh.dat};
        \addplot table[x=meshsize,y=errL2_u]{cv_potential/k2_tet_dh.dat};
        \addplot[dashed,color=blue,mark=+] table[x=meshsize,y=errL2_u] {cv_potential/k0_tet_nostab.dat};
        \addplot[dashed,color=red,mark=+] table[x=meshsize,y=errL2_u] {cv_potential/k1_tet_nostab.dat};
        \addplot[dashed,color=brown,mark=+] table[x=meshsize,y=errL2_u] {cv_potential/k2_tet_nostab.dat};
        \logLogSlopeTriangle{0.90}{0.4}{0.1}{2}{black};
        \logLogSlopeTriangle{0.90}{0.4}{0.1}{3}{black};
        \logLogSlopeTriangle{0.90}{0.4}{0.1}{4}{black};
      \end{loglogaxis}
    \end{tikzpicture}
  \end{minipage}
  \hspace{0.025\textwidth}
  %% error vs solution time
  \begin{minipage}[b]{0.30\textwidth}
    \ifMMMAS 
    \begin{tikzpicture}[scale=0.55]
    \else
    \begin{tikzpicture}[scale=0.60]
    \fi
      \begin{loglogaxis}
        \addplot table[x=tot_solution_time,y=errL2_u]{cv_potential/k0_tet_dh.dat};
        \addplot table[x=tot_solution_time,y=errL2_u]{cv_potential/k1_tet_dh.dat};
        \addplot table[x=tot_solution_time,y=errL2_u]{cv_potential/k2_tet_dh.dat};
        \addplot[dashed,color=blue,mark=+] table[x=tot_solution_time,y=errL2_u] {cv_potential/k0_tet_nostab.dat};
        \addplot[dashed,color=red,mark=+] table[x=tot_solution_time,y=errL2_u] {cv_potential/k1_tet_nostab.dat};
        \addplot[dashed,color=brown,mark=+] table[x=tot_solution_time,y=errL2_u] {cv_potential/k2_tet_nostab.dat};
      \end{loglogaxis}
    \end{tikzpicture}
  \end{minipage}
  \hspace{0.025\textwidth}
  %% error vs nDOFs
  \begin{minipage}[b]{0.30\textwidth}
    \ifMMMAS 
    \begin{tikzpicture}[scale=0.55]
    \else
    \begin{tikzpicture}[scale=0.60]
    \fi
      \begin{loglogaxis}
        \addplot table[x=n_DOFs,y=errL2_u]{cv_potential/k0_tet_dh.dat};
        \addplot table[x=n_DOFs,y=errL2_u]{cv_potential/k1_tet_dh.dat};
        \addplot table[x=n_DOFs,y=errL2_u]{cv_potential/k2_tet_dh.dat};
        \addplot[dashed,color=blue,mark=+] table[x=n_DOFs,y=errL2_u]{cv_potential/k0_tet_nostab.dat};
        \addplot[dashed,color=red,mark=+] table[x=n_DOFs,y=errL2_u]{cv_potential/k1_tet_nostab.dat};
        \addplot[dashed,color=brown,mark=+] table[x=n_DOFs,y=errL2_u]{cv_potential/k2_tet_nostab.dat};
        \logLogSlopeTriangleNDOFs{0.10}{-0.4}{0.1}{2/3}{black};
        \logLogSlopeTriangleNDOFs{0.10}{-0.4}{0.1}{1}{black};
        \logLogSlopeTriangleNDOFs{0.10}{-0.4}{0.1}{4/3}{black};
      \end{loglogaxis}
    \end{tikzpicture}
  \end{minipage}
  \vspace{0.25cm}\\
  %% L2 error p
  %% error vs h
  \begin{minipage}[b]{0.30\textwidth}
    \ifMMMAS 
    \begin{tikzpicture}[scale=0.55]
    \else
    \begin{tikzpicture}[scale=0.60]
    \fi
      \begin{loglogaxis}
        \addplot table[x=meshsize,y=errGTStab_p]{cv_potential/k0_tet_dh.dat};
        \addplot table[x=meshsize,y=errGTStab_p]{cv_potential/k1_tet_dh.dat};
        \addplot table[x=meshsize,y=errGTStab_p]{cv_potential/k2_tet_dh.dat};
        \addplot[dashed,color=blue,mark=+] table[x=meshsize,y=errGTStab_p] {cv_potential/k0_tet_nostab.dat};
        \addplot[dashed,color=red,mark=+] table[x=meshsize,y=errGTStab_p] {cv_potential/k1_tet_nostab.dat};
        \addplot[dashed,color=brown,mark=+] table[x=meshsize,y=errGTStab_p] {cv_potential/k2_tet_nostab.dat};
        \logLogSlopeTriangle{0.90}{0.4}{0.1}{1}{black};
        \logLogSlopeTriangle{0.90}{0.4}{0.1}{2}{black};
        \logLogSlopeTriangle{0.90}{0.4}{0.1}{3}{black};
      \end{loglogaxis}
    \end{tikzpicture}
  \end{minipage}
  \hspace{0.025\textwidth}
  %% error vs solution time
  \begin{minipage}[b]{0.30\textwidth}
    \ifMMMAS 
    \begin{tikzpicture}[scale=0.55]
    \else
    \begin{tikzpicture}[scale=0.60]
    \fi
      \begin{loglogaxis}
        \addplot table[x=tot_solution_time,y=errGTStab_p]{cv_potential/k0_tet_dh.dat};
        \addplot table[x=tot_solution_time,y=errGTStab_p]{cv_potential/k1_tet_dh.dat};
        \addplot table[x=tot_solution_time,y=errGTStab_p]{cv_potential/k2_tet_dh.dat};
        \addplot[dashed,color=blue,mark=+] table[x=tot_solution_time,y=errGTStab_p] {cv_potential/k0_tet_nostab.dat};
        \addplot[dashed,color=red,mark=+] table[x=tot_solution_time,y=errGTStab_p] {cv_potential/k1_tet_nostab.dat};
        \addplot[dashed,color=brown,mark=+] table[x=tot_solution_time,y=errGTStab_p] {cv_potential/k2_tet_nostab.dat};
      \end{loglogaxis}
    \end{tikzpicture}
  \end{minipage}
  \hspace{0.025\textwidth}
  %% error vs nDOFs
  \begin{minipage}[b]{0.30\textwidth}
    \ifMMMAS 
    \begin{tikzpicture}[scale=0.55]
    \else
    \begin{tikzpicture}[scale=0.60]
    \fi
      \begin{loglogaxis}
        \addplot table[x=n_DOFs,y=errGTStab_p]{cv_potential/k0_tet_dh.dat};
        \addplot table[x=n_DOFs,y=errGTStab_p]{cv_potential/k1_tet_dh.dat};
        \addplot table[x=n_DOFs,y=errGTStab_p]{cv_potential/k2_tet_dh.dat};
        \addplot[dashed,color=blue,mark=+] table[x=n_DOFs,y=errGTStab_p]{cv_potential/k0_tet_nostab.dat};
        \addplot[dashed,color=red,mark=+] table[x=n_DOFs,y=errGTStab_p]{cv_potential/k1_tet_nostab.dat};
        \addplot[dashed,color=brown,mark=+] table[x=n_DOFs,y=errGTStab_p]{cv_potential/k2_tet_nostab.dat};
        \logLogSlopeTriangleNDOFs{0.10}{-0.4}{0.1}{1/3}{black};
        \logLogSlopeTriangleNDOFs{0.10}{-0.4}{0.1}{2/3}{black};
        \logLogSlopeTriangleNDOFs{0.10}{-0.4}{0.1}{1}{black};
      \end{loglogaxis}
    \end{tikzpicture}
  \end{minipage}
  \caption{\label{fig:general.error.tetra_nodh}
    Relative energy-error ${\|\uh-\IntXhs\boldsymbol{u}\|_{\boldsymbol{\rm X},\flat,h}}/{\|\IntXhs\boldsymbol{u}\|_{\boldsymbol{\rm X},\flat,h}}$ (\emph{top row}),
    $L^2$-error ${\|\uTh-\boldsymbol{\pi}_{\boldsymbol{\cal P},h}^{k+1}\boldsymbol{u}\|_{\Omega}}/{\|\boldsymbol{\pi}_{\boldsymbol{\cal P},h}^{k+1}\boldsymbol{u}\|_{\Omega}}$ (\emph{middle row}),
    and $L^2$-like-error ${\|\ph -\IntYh p\|_{{\rm Y},\flat,h}}/$ ${\|\IntYh p\|_{{\rm Y},\flat,h}}$ (\emph{bottom row})
    versus meshsize $h$ (\emph{left column}), solution time (\emph{center column}), and number of DoF (\emph{right column}) on tetrahedral meshes for the test-case of Section~\ref{sec:num}, and comparison (dashed lines) with the case where the stabilization bilinear form $\dh$ is removed.}
\end{figure}

\section*{Acknowledgments}
The authors thank Lorenzo Botti (University of Bergamo) for giving them access to his 3D \verb!C++! code \verb!SpaFEDTe! (\url{https://github.com/SpaFEDTe/spafedte.github.com}). Florent Chave and Simon Lemaire also acknowledge support from the LabEx CEMPI (ANR-11-LABX-0007-01).

\ifMMMAS

\bibliographystyle{ws-m3as}

\else

\bibliographystyle{plain}

\fi

\bibliography{magnethho}

\end{document}